\documentclass[a4paper,12pt]{amsart}
\DeclareRobustCommand{\SkipTocEntry}[5]{}
\usepackage[english]{babel}
\usepackage[T1]{fontenc}
\usepackage{array}
\usepackage{makecell}
\usepackage[a4paper,margin=3cm]{geometry}
\usepackage[justification=centering]{caption}

\usepackage{amsmath,amsthm,amssymb,amsxtra,calligra,mathrsfs}
\usepackage{graphicx}
\usepackage{tikz}
\usepackage{tikz-cd} 
\usepackage{xcolor}
\usepackage{upgreek}
\usepackage{comment}
\usepackage{graphicx}
\usepackage{mathtools}
\usepackage{extarrows}
\usepackage{faktor}
\usepackage{makecell}
\usepackage{enumitem}
\usepackage{comment}
\usepackage{bookmark}
\usepackage{hyperref}
\usepackage{mathrsfs}
\usepackage{multirow}
\hypersetup{
    colorlinks=true, 
    citecolor=blue,
    linkcolor=magenta,
    pdfauthor={Haoyu Wu},
	bookmarksnumbered=true,
}



\newcommand{\sheafHom}{\mathscr{H}\text{\kern -3pt {\calligra\large om}}\,}

\newcommand*{\rom}[1]{\expandafter\@slowromancap\romannumeral #1@}

\DeclareMathOperator{\vol}{\mathrm{vol}}
\DeclareMathOperator{\ord}{\mathrm{ord}}

\DeclareMathOperator{\val}{Val}
\DeclareMathOperator{\ind}{ind}
\DeclareMathOperator{\lct}{lct}

\DeclareMathOperator{\aut}{Aut}

\DeclareMathOperator{\coeff}{coeff}

\DeclareMathOperator{\codim}{codim}

\newcommand{\q}{/\!\!/}

\newtheorem{defn-pro}{Definition-Proposition}
\newtheorem{defn-thm}{Definition-Theorem}
\newtheorem{thm}{Theorem}[section]
\newtheorem{lem}[thm]{Lemma}

\newtheorem{defn}[thm]{Definition}

\newtheorem{rem}[thm]{Remark}

\newtheorem{notation}{Notation}

\theoremstyle{remark}

\title{Wall crossing for K-moduli space of degree 5 pairs}

\author{Long Pan}
\address{Fudan University}
\email{18110180008@fudan.edu.cn}

\begin{document}

\begin{abstract}
     In this paper, we describe the wall-crossing of the two parameter K-moduli space of pairs $(\mathbb{P}^2,aQ+bL)$, where $Q$ is a plane quintic curve and $L$ is a line.
\end{abstract}

\maketitle
\tableofcontents
\section{Introduction}

Comparison between the different compactifications of the same moduli space is an interesting problem in algebraic geometry. In \cite{laza2007deformationssingularitiesvariationgit}, Laza described the VGIT moduli space $\mathcal{M}(t)$ of the degree 5 pair $(Q,L)$ where $Q$ is a plane quintic curve and $L$ is a line. It can be related to the Baily-Borel compactification of moduli space of certain lattice polarized $K3$ surfaces by viewing the pair $(Q,L)$ as the branching curve of a double cover of $\mathbb{P}^2$. Denote the compactification of the moduli space of degree 5 pairs from above by $(\mathcal{D}/\Gamma)^*$. Moreover, Laza has shown that $\mathcal{M}(1)\cong(\mathcal{D}/\Gamma)^*$.

If we fix any pair of rational numbers $(a,b)\in\left\{(a,b)~|~\text{$5a+b<3$, $a\ge0$, $b\ge0$}\right\}$, then $(\mathbb{P}^2,aQ+bL)$ is a log Fano pair, and the theory of K-stability provides a natural framework to construct alternative compactifications of the moduli space of degree 5 pairs. This framework is established in \cite{ADL19} and \cite{zhou2023chamberdecompositionksemistabledomains}. Let $\overline{\mathcal{P}}^K_{(a,b)}$ be the moduli stack parametrizing $K$-semistable log Fano pairs which admit $\mathbb{Q}$-Gorenstein smoothings to $(\mathbb{P}^2,aQ+bL)$, where $(Q,L)$ is a degree 5 pair. The goal of this paper is to describe the wall-crossing of this moduli space. 
Firstly, we find out all possible K-semistable degenerations of $(\mathbb{P}^2,aQ+bL)$:

\begin{thm}
    If K-semistable log Fano pair $(X,aC+bL)$ is parameterized by $\overline{\mathcal{P}}^K_{(a,b)}$, then $X$ can only possibly be $\mathbb{P}^2$, $\mathbb{P}(1,1,4)$, $\mathbb{P}(1,4,25)$ or $X_{26}$.
\end{thm}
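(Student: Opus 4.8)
The plan is to combine the classification of $\mathbb{Q}$-Gorenstein smoothable degenerations of $\mathbb{P}^2$ with the local-to-global volume comparison for K-semistable log Fano pairs, and then to eliminate the surviving candidates by explicit K-(in)stability tests against torus-invariant valuations.

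First I would record the structural constraints on $X$. Since $(X,aC+bL)$ is K-semistable it is in particular a normal klt log Fano pair, and since it admits a $\mathbb{Q}$-Gorenstein smoothing to $(\mathbb{P}^2,aQ+bL)$ the surface $X$ is a $\mathbb{Q}$-Gorenstein smoothable log del Pezzo surface with $(-K_X)^2=(-K_{\mathbb{P}^2})^2=9$ and at worst $T$-singularities. Moreover the boundary classes are pinned down numerically by the smoothing: $C\Qequiv -\tfrac53 K_X$ and $L\Qequiv -\tfrac13 K_X$, so that $\bigl(-(K_X+aC+bL)\bigr)^2=(3-5a-b)^2$. By the Hacking--Prokhorov classification of degenerations of $\mathbb{P}^2$, the Picard-rank-one possibilities are exactly the weighted projective planes $\mathbb{P}(\alpha^2,\beta^2,\gamma^2)$ attached to Markov triples $(\alpha,\beta,\gamma)$ (namely $\mathbb{P}^2=\mathbb{P}(1,1,1)$, $\mathbb{P}(1,1,4)$, $\mathbb{P}(1,4,25)$, $\mathbb{P}(4,25,841),\dots$), together with the higher-Picard-rank degenerations obtained by Markov mutations; the asserted list says precisely which of these survive.

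Next I would use the Fujita--Liu inequality to make a first cut. For any closed point $x\in X$ of the two-dimensional K-semistable pair,
\[
\widehat{\vol}(x,X,aC+bL)\ \geq\ \tfrac{4}{9}\,\bigl(-(K_X+aC+bL)\bigr)^2\ =\ \tfrac{4}{9}(3-5a-b)^2 .
\]
Dropping the boundary only increases the left-hand side, and for a quotient singularity with local fundamental group $G_x$ one has $\widehat{\vol}(x,X)=4/|G_x|$; hence $|G_x|\leq 9/(3-5a-b)^2$. For $\mathbb{P}(\alpha^2,\beta^2,\gamma^2)$ with $\gamma$ largest the vertex singularity has order $\gamma^2$, so this bound forces $\gamma\leq 3/(3-5a-b)$ and already eliminates every Markov plane except for $(a,b)$ in a thin strip near the wall $5a+b=3$.

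The main obstacle, and the heart of the argument, is to exclude the higher Markov degenerations $\mathbb{P}(4,25,841),\dots$ and their mutations \emph{uniformly} over the whole region $\{5a+b<3,\ a,b\geq 0\}$, since the crude volume estimate still permits them in that thin strip. For this I would test K-stability against the coordinate curves of each weighted projective plane: because $C$ and $L$ lie in the fixed classes $|\mathcal{O}(5\alpha\beta\gamma)|$ and $|\mathcal{O}(\alpha\beta\gamma)|$ determined above, the $\beta$-invariant of each torus-invariant divisor (equivalently, the refined local volume recording the multiplicities of $C$ and $L$ at the vertices) is an explicit affine function of $(a,b)$, and I would show that for every Markov triple beyond $(1,2,5)$ at least one such $\beta$ is negative throughout the admissible region, contradicting K-semistability. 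The surface $X_{26}$ and the precise role of the line $L$ then demand a separate, more delicate computation, since the boundary must be arranged so as not to destabilize the extra singularity; settling this case, together with proving that the chosen valuations really are destabilizing rather than merely candidate ones, is where the bulk of the work lies.
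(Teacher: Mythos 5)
Your proposal correctly identifies the setup (degeneration of $\mathbb{P}^2$, $K_X^2=9$, $T$-singularities, $C\sim_{\mathbb{Q}}-\tfrac53K_X$, $L\sim_{\mathbb{Q}}-\tfrac13K_X$) and the crude Fujita--Liu bound $|G_x|\le 9/(3-5a-b)^2$, and you rightly observe that this bound alone cannot exclude the higher Markov degenerations in the strip near $5a+b=3$. But that is precisely where the proof stops being a proof: the uniform exclusion is left as a plan, and the mechanism you propose for it does not work as stated. The $\beta$-invariant of a torus-invariant divisor $E$ on a candidate surface is \emph{not} a function of $(a,b)$ alone --- it involves $\mathrm{ord}_E(C)$ and $\mathrm{ord}_E(L)$, and the curves $C$, $L$ are arbitrary members of their linear systems, not torus-invariant. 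To show instability of \emph{every} pair on a higher Markov surface you must bound these multiplicities from below uniformly over all curve configurations, and the only available source of such a bound is local: the classes of $C$ and $L$ in the local class group at a high-index singularity are constrained by $3C\sim -5K_X$, $3L\sim -K_X$, which forces the curves to pass through the singular point with controlled multiplicity. That local class-group constraint, fed back into the normalized-volume inequality \emph{with} the boundary, is exactly the content of the paper's Lemma \ref{index} (the ADL19 index estimate): it yields $\mathrm{ind}(x,K_X)\le\min\bigl\{\lfloor 3/(3-5a-b)\rfloor,\ \max\{5,1\}\bigr\}\le 5$ uniformly in $(a,b)$, since neither $5$ nor $1$ is divisible by $3$. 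With index at most $5$, the Hacking--Prokhorov classification immediately leaves only $\mathbb{P}^2$, $\mathbb{P}(1,1,4)$, $\mathbb{P}(1,4,25)$ and $X_{26}$; this is the paper's entire proof. Your proposal is missing this key refined estimate, and without it the case-by-case $\beta$-computation over infinitely many Markov triples (each with moduli of curves) is open-ended.

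A secondary but real error: your statement of the classification is wrong in a way that omits one of the four target surfaces. By the Noether-type formula $K_X^2+\rho(X)+\sum_p\mu_p=10$, every $\mathbb{Q}$-Gorenstein degeneration of $\mathbb{P}^2$ has $\rho=1$ and all singularities of type $\tfrac{1}{n^2}(1,na-1)$; there are no ``higher-Picard-rank degenerations obtained by Markov mutations.'' The non-toric degenerations are partial $\mathbb{Q}$-Gorenstein smoothings of the Markov planes, and $X_{26}$ (the partial smoothing of $\mathbb{P}(1,4,25)$ keeping only the $\tfrac{1}{25}(1,4)$ point) is one of them --- it is not a weighted projective plane, so it does not appear in your candidate list at all, yet it genuinely occurs in the K-moduli space once $15a+b\ge 8$.
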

Then according to above classification, we can find all strictly K-polystable log Fano pairs parameterized by $\overline{\mathcal{P}}^K_{(a,b)}$ and their K-stability threshold:

\begin{thm}
The critical log Fano pairs and the corresponding critical lines are listed in the following table:

\begin{center}
    \renewcommand*{\arraystretch}{1.2}
    \begin{table}[ht]
    \centering
      \begin{tabular}{|c |c |c |c|}
    \hline
     surface &wall &  curve $Q$ on $X$ & curve $L$ on $X$  \\  \hline
     
      \multirow{14}{*}{$\mathbb{P}^2$} &$2b=5a$   &  $\left\{zxy(x+\lambda_1y)(y+\lambda_2x)=0\right\}$   & $\left\{z=0\right\}$ \\ \cline{2-4} 

      &$5b=11a$  &  $\left\{x^2(y^3+x^2z)=0\right\}$   & $\left\{z=0\right\}$
      \\ \cline{2-4}   
      
      &$b=2a$ &     $\left\{yx^2(zx+y^2)=0\right\}$   & $\left\{z=0\right\}$ 
      \\ \cline{2-4}  
      
      &$7b=13a$&     $\left\{xy(y^3+zx^2)=0\right\}$   & $\left\{z=0\right\}$ 
      \\ \cline{2-4}
      
      &$4b=7a$ &     $\left\{x^2(y^3+z^2x)=0\right\}$   & $\left\{z=0\right\}$ 
      \\ \cline{2-4}
      
      &$3b=5a$ &     $\left\{y^5+x^4z=0\right\}$   & $\left\{z=0\right\}$ 
      \\ \cline{2-4}
      
      &$5b=8a$ &     $\left\{xy(y^3+x^2z)=0\right\}$   & $\left\{z=0\right\}$
      \\ \cline{2-4}
          
      &$7b=10a$ &     $\left\{y(y^4+x^3z)=0\right\}$   & $\left\{z=0\right\}$ \\ \cline{2-4}

     &$a=b$ &  $\left\{y^2(y^3+x^2z)\right\}$  & $\left\{z=0\right\}$ 
     \\ \cline{2-4}
     
     &$8b=5a$ &     $\left\{y^5+x^3z^2=0\right\}$   & $\left\{z=0\right\}$ 
     \\ \cline{2-4}
     &$5b=2a$ &     $\left\{xz(y^3+x^2z)=0\right\}$   & $\left\{z=0\right\}$ 
     \\ \cline{2-4}
     &$4b=a$ &     $\left\{yx(y^3+z^2x)=0\right\}$   & $\left\{z=0\right\}$ 
     \\ \cline{2-4}
     &$7b=a$ &     $\left\{z(y^4+x^3z)=0\right\}$   & $\left\{z=0\right\}$ 
     \\ \hline
     
     \multirow{6}{*}{$\mathbb{P}(1,1,4)$} 
     &$7a-b=3$   &  $\left\{z^2xy=0\right\}$  & $\left\{xy=0\right\}$ or $l_1l_2=0$
     \\ \cline{2-4}
     & $35a-17b=15$  &  $\left\{z^2xy+y^{10}=0\right\}$   & $\left\{x^2=0\right\}$
     \\ \cline{2-4}
     &$7a-4b=3$ &     $\left\{z^2xy+zx^5y=0\right\}$   & $\left\{y^2=0\right\}$ 
     \\ \cline{2-4}
     & $11a+b=6$ &     $\left\{x^2z^2+y^6z=0\right\}$   & $\left\{y^2=0\right\}$ 
     \\ \cline{2-4}
     & $11a-2b=6$ &     $\left\{x^2z^2+y^6z=0\right\}$   & $\left\{xy=0\right\}$ 
     \\ \cline{2-4}
     & $11a-5b=6$ &     $\left\{x^2z^2+y^6z=0\right\}$   & $\left\{x^2=0\right\}$ 
     \\ \hline 
     
     \multirow{6}{*}{$\mathbb{P}(1,4,25)$}&$115a+11b=63$   &  $\left\{z^2+x^2y^{12}=0\right\}$   & $\left\{x^2y^2=0\right\}$ 
     \\ \cline{2-4} 
     &$115a-19b=63$  &  $\left\{z^2+x^2y^{12}=0\right\}$   & $\left\{x^6y=0\right\}$
     \\ \cline{2-4}   
     &$115a-49b=63$ &     $\left\{z^2+x^2y^{12}=0\right\}$   & $\left\{x^{10}=0\right\}$ 
     \\ \cline{2-4}  
     &$95a+13b=54$ &     $\left\{z^2+x^6y^{11}=0\right\}$   & $\left\{x^2y^2=0\right\}$ 
     \\ \cline{2-4}
     &$95a-17b=54$ &     $\left\{z^2+x^6y^{11}=0\right\}$   & $\left\{x^6y=0\right\}$ 
     \\ \cline{2-4}
     &$95a-47b=54$ &     $\left\{z^2+x^6y^{11}=0\right\}$   & $\left\{x^{10}=0\right\}$ 
     \\ \hline
     
     \multirow{3}{*}{$X_{26}$}&$45a-17b=24$   &  $\left\{w=0\right\}$   & $\left\{x^5=0\right\}$ 
      \\ \cline{2-4}  
     &$45a-7b=24$  &  $\left\{w=0\right\}$   & $\left\{x^3y=0\right\}$\\ \cline{2-4}  
     &$15a+b=8$ &     $\left\{w=0\right\}$   & $\left\{xy^2=0\right\}$ \\ \hline
     \end{tabular}
     \caption{ Critical pairs and critical lines}
     \label{Kwall2}
\end{table}
\end{center}
where $l_2$ and $l_2$ is degree 1 form of $x,y$ and $l_i\neq x,y$.
\end{thm}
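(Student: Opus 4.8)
The plan is to run the wall-crossing machinery of \cite{ADL19} and \cite{zhou2023chamberdecompositionksemistabledomains}: the admissible region $\{5a+b<3,\ a\ge 0,\ b\ge 0\}$ is subdivided into finitely many chambers by walls, each wall being the vanishing locus of a generalized Futaki invariant, and along each wall a strictly K-polystable degeneration of $(\mathbb{P}^2,aQ+bL)$ appears. The starting observation is that a log Fano pair which is strictly K-polystable but not K-stable must carry a nontrivial $\mathbb{G}_m$-action, since it is a fixed point of a product test configuration. So I would first enumerate, for each of the four surfaces $\mathbb{P}^2$, $\mathbb{P}(1,1,4)$, $\mathbb{P}(1,4,25)$, $X_{26}$ produced by Theorem~1, all configurations $(C,L)$ of a quintic and a line that are invariant under a one-parameter subgroup of $\Aut(X)$. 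These are precisely the pairs whose defining equations are torus semi-invariant (nearly monomial), which explains why every entry of the table is cut out by such polynomials.

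For each candidate configuration the critical line is obtained by setting the Futaki invariant of the induced product test configuration equal to zero, equivalently by imposing $\beta_{(X,aC+bL)}(E)=0$ for the divisorial valuation $E$ associated to the $\mathbb{G}_m$-action. This equation is linear in $(a,b)$: since $-K_X-aC-bL\Qequiv(3-5a-b)H$ scales linearly, while the log discrepancy $A_{(X,aC+bL)}(E)=A_X(E)-a\,\ord_E(C)-b\,\ord_E(L)$ is affine-linear in $(a,b)$, the difference $A-S$ vanishes along a line. I would carry out this computation surface by surface, using toric/weighted combinatorics to evaluate the $S$-invariant on the weighted projective planes and on $X_{26}$; this recovers the walls $2b=5a$, $7a-b=3$, $115a+11b=63$, $45a-17b=24$, and the remaining entries.

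The substantive content is confirming K-polystability \emph{on} each wall and proving the list is \emph{exhaustive}. For polystability I would invoke the equivariant K-stability principle, so that it suffices to test $T$-equivariant valuations: for the pairs carrying a two-dimensional torus this reduces to a convex-geometric barycenter condition, while for those with only a $\mathbb{G}_m$-action I would run the Abban--Zhuang method along the flag $L\subset X$ to bound $\delta_{(X,aC+bL)}\ge 1$, with equality forced by the vanishing Futaki invariant of the $\mathbb{G}_m$-action; polystability then follows because the $\mathbb{G}_m$ accounts for the only obstruction to stability. Exhaustiveness follows by combining Theorem~1 (which restricts $X$) with the classification of $\mathbb{G}_m$-configurations on each $X$, together with a check that the walls so produced are mutually compatible and partition the admissible region.

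I expect the main obstacle to be the K-polystability verification at the walls where $X$ is singular, namely $X_{26}$, $\mathbb{P}(1,4,25)$ and $\mathbb{P}(1,1,4)$: there the $S$-invariant computations along flags passing through the quotient singularities, and the resulting Abban--Zhuang estimates, are delicate and must be done with care for the correct orbifold data. A secondary difficulty is to rule out spurious candidates that satisfy the Futaki vanishing condition yet fail to be K-semistable; for each such excluded configuration one must exhibit an explicit destabilizing valuation with $\beta<0$, and organizing this case analysis so that the final list is provably complete is the most labor-intensive part of the argument.
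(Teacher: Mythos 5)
Your overall skeleton coincides with the paper's: restrict the surface to $\mathbb{P}^2$, $\mathbb{P}(1,1,4)$, $\mathbb{P}(1,4,25)$, $X_{26}$ via the index estimate (Theorem \ref{degeneration}); observe that a strictly K-polystable pair has reductive, hence positive-dimensional, automorphism group, so it is toric or of complexity one (Theorem \ref{critical}); enumerate the torus-invariant configurations $(Q,L)$ on each surface; solve the linear equation $Fut(\lambda)=0$, equivalently $\beta(E)=0$ for the divisorial valuation induced by the $\mathbb{G}_m$-action, to locate the candidate walls; and discard spurious candidates by exhibiting destabilizing valuations. All of this is exactly how the paper proceeds, including the weighted-blow-up computations of $A$ and $S$ on the weighted projective planes and on $X_{26}$.

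The genuine gap is in your on-wall polystability verification. You propose to prove $\delta_{(X,aC+bL)}\ge 1$ by Abban--Zhuang, note that the vanishing Futaki invariant forces $\delta=1$, and conclude polystability ``because the $\mathbb{G}_m$ accounts for the only obstruction to stability.'' That inference is not valid: $\delta=1$ together with the vanishing of the Futaki character of one chosen $\mathbb{G}_m$ yields only K-semistability. A $\mathbb{G}_m$-invariant pair lying on its own Futaki-zero line can still be strictly semistable rather than polystable, namely when it admits a second, non-product, Futaki-zero special degeneration onto a different (typically toric) pair; such pairs populate the exceptional loci of the wall crossings described in Theorems \ref{type1wallcrossing} and \ref{type2wallcrossing}, so they genuinely occur among your candidates, and your test cannot separate them from the polystable pairs that belong in the table. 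The tool that does this work in the paper is the complexity-one polystability criterion (Theorem \ref{complexity1}): $\beta(E)=0$ for every \emph{horizontal} $\mathbb{T}$-invariant divisor, $\beta(E)>0$ \emph{strictly} for every vertical one, and $Fut(\lambda)=0$ for every one-parameter subgroup of the maximal torus; the strict inequality on vertical divisors is precisely the discriminating condition your plan omits, and it is also what generates the inequalities (such as $7a-b\ge 3$ or $\ord_{H}(Q)\le 2$) that prune the case analysis. Your route could be repaired by invoking a torus-reduced invariant (requiring $\delta_{\mathbb{T}}>1$ after twisting by the torus) in place of the bare $\delta$, but some such statement must be named explicitly, since it carries the entire burden of distinguishing ``polystable on the wall'' from ``merely semistable on the wall.''
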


Finally, we will describe the wall crossing morphism of the K-moduli space $\overline{P}^K_{a,b}$. We show that:
\begin{thm}(=Theorem \ref{horizontalwall})
    Let $(a,b)$ be a pair of positive rational numbers $(a,b)$ such that $5a+b\le3$, $2b\le5a$ and $7a-b\le3$. Then we have the isomorphism between the GIT-moduli space of degree 5 pairs and the K-moduli space: $\overline{\mathcal{M}}(t)\cong\overline{P}_{a,b}^K$, where $t=\dfrac{b}{a}$.
\end{thm}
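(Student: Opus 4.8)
The plan is to realize both $\overline{\mathcal{M}}(t)$ and $\overline{P}^K_{a,b}$ as good quotients of one common parameter space by $\mathrm{PGL}_3=\mathrm{Aut}(\mathbb{P}^2)$, and then to identify the two quotients by matching their (semi/poly)stable loci together with their polarizations. The first step is to pin down the underlying surface. The surface classification established above says that the only possibilities are $\mathbb{P}^2,\mathbb{P}(1,1,4),\mathbb{P}(1,4,25)$ and $X_{26}$, and by the wall data of Table \ref{Kwall2} the non-$\mathbb{P}^2$ surfaces are introduced precisely upon crossing the walls $7a-b=3$ (to $\mathbb{P}(1,1,4)$) and the further walls beyond it. The hypotheses $5a+b\le 3$ (the log Fano, respectively boundary, condition), $2b\le 5a$ and $7a-b\le 3$ therefore confine $(a,b)$ to the chamber in which every K-semistable pair parametrized by $\overline{P}^K_{a,b}$ has underlying surface $X=\mathbb{P}^2$. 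Consequently the whole K-moduli space is a quotient of the locus $U$ of degree $5$ pairs $(Q,L)$ inside $\mathbb{P}\big(H^0(\mathbb{P}^2,\mathcal{O}(5))\big)\times\mathbb{P}\big(H^0(\mathbb{P}^2,\mathcal{O}(1))\big)$, which is exactly Laza's parameter space for $\overline{\mathcal{M}}(t)$.

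Next I would match the polarizations. The good moduli space $\overline{P}^K_{a,b}$ is polarized by the CM $\mathbb{Q}$-line bundle $\lambda_{\mathrm{CM},a,b}$, whose formation commutes with base change and with the $\mathrm{PGL}_3$-action (\cite{ADL19}), while $\overline{\mathcal{M}}(t)$ is the GIT quotient of $U$ for the natural linearization $\mathcal{O}(m_1,m_2)$ with ratio $m_1:m_2$ fixed by $t$. Restricting the universal family of pairs to $U$ and computing $\lambda_{\mathrm{CM},a,b}$ by Riemann--Roch and intersection theory on the trivial family $\mathbb{P}^2\times U$, I expect to express it as a positive $\mathbb{Q}$-combination of the two hyperplane classes, and then to check that this combination is proportional to Laza's VGIT linearization attached to $t=b/a$. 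This identifies the ample models and reduces the theorem to the equality of the (semi/poly)stable loci.

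The crux, and the step I expect to be the main obstacle, is to prove that on this chamber K-(semi/poly)stability of $(\mathbb{P}^2,aQ+bL)$ coincides with GIT-(semi/poly)stability of the configuration $(Q,L)$ for the linearization above. Because the surface classification and the wall analysis forbid any K-semistable degeneration whose central surface is not $\mathbb{P}^2$, every special test configuration destabilizing a pair in this chamber has central fiber $(\mathbb{P}^2,\cdot)$ and is therefore induced by a one-parameter subgroup of $\mathrm{PGL}_3$. For such test configurations the generalized Futaki/$\beta$-invariant of the pair should reduce, after the anticanonical normalization by $-(K_{\mathbb{P}^2}+aQ+bL)=(3-5a-b)H$, to the Hilbert--Mumford weight of $(Q,L)$; I would verify term by term (using the log canonical threshold input of the boundary together with the $S$-invariant on $\mathbb{P}^2$) that this weight carries exactly the coefficients $a$ and $b$ attached to $Q$ and $L$, so that valuative K-stability in the sense of Fujita--Li becomes the numerical GIT criterion. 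The delicate points are to confirm the matching with the correct normalization, so that the two stability conditions are cut out by the same inequalities on $1$-parameter subgroups, and to check that strictly polystable pairs correspond to closed $\mathrm{PGL}_3$-orbits on both sides.

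Finally, once the semistable loci coincide, $U^{k\text{-}ss}=U^{\mathrm{GIT}\text{-}ss}(t)$, and the polarizations are proportional, both moduli spaces are literally the same projective good quotient $U^{ss}\q\mathrm{PGL}_3$, the isomorphism sending a K-polystable pair to its closed-orbit GIT-polystable representative. Tracking the weights gives $t=b/a$, and the properness of both sides---of $\overline{P}^K_{a,b}$ from \cite{ADL19} and \cite{zhou2023chamberdecompositionksemistabledomains}, and of $\overline{\mathcal{M}}(t)$ from Laza---guarantees that this identification extends to an isomorphism of the compactifications $\overline{P}^K_{a,b}\cong\overline{\mathcal{M}}(t)$, as claimed.
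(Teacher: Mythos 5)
Your first step (confining the underlying surface to $\mathbb{P}^2$ on the chamber $5a+b\le 3$, $2b\le 5a$, $7a-b\le 3$) agrees with the paper, which deduces this from Theorems \ref{whenp114appear}, \ref{whenX26appear} and \ref{whenP1425appear}. But where the paper then simply invokes Theorem 1.1 of \cite{zhou2023chamberdecompositionksemistabledomains} to identify $\overline{P}^K_{a,b}$ with $\overline{\mathcal{M}}(b/a)$, you attempt to re-derive that comparison, and your crux step contains a genuine gap. You claim that ``every special test configuration destabilizing a pair in this chamber has central fiber $(\mathbb{P}^2,\cdot)$ and is therefore induced by a one-parameter subgroup of $\mathrm{PGL}_3$'', citing the surface classification. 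This does not follow. The classification (Theorem \ref{degeneration}, via Lemma \ref{index} and Liu's volume inequality) constrains only \emph{K-semistable} central fibers, whereas the central fiber of a destabilizing special test configuration is never K-semistable: if it were, the Futaki invariant of the test configuration, which equals the Futaki invariant of the induced product configuration on the central fiber, would be non-negative. Since $\mathbb{P}^2$ admits $\mathbb{Q}$-Gorenstein special degenerations to $\mathbb{P}(p^2,q^2,r^2)$ for every Markov triple $(p,q,r)$, nothing rules out destabilizing test configurations whose central surface lies outside the list of four, and your reduction of K-stability to the Hilbert--Mumford criterion collapses precisely in the direction ``GIT-semistable $\Rightarrow$ K-semistable''. (The converse direction, K-semistable $\Rightarrow$ GIT-semistable via the CM line bundle computation and the equality of Futaki invariants with Hilbert--Mumford weights for $\mathrm{PGL}_3$ one-parameter subgroups, is sound.)

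The standard repair --- and what the cited theorem of Zhou, as well as this paper's own proof of Theorem \ref{type1wallcrossing}, actually use --- runs through openness and properness rather than through destabilizing test configurations: general pairs in this chamber are K-stable; given a GIT-polystable pair $(Q,L)$, degenerate a family of K-semistable pairs to it and take its K-polystable limit in $\overline{P}^K_{a,b}$, which exists by properness of the K-moduli space; \emph{that} limit is K-semistable, so the classification does apply to it and its underlying surface is $\mathbb{P}^2$; by the already-established direction the limit is GIT-polystable; finally, two GIT-polystable points lying in a common orbit closure lie in a single $\mathrm{PGL}_3$-orbit, so $(Q,L)$ itself is K-polystable. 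With your crux step replaced by this argument your outline closes up (and amounts to reproving the cited result rather than citing it), but as written the proposal does not prove the theorem.
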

Then we describe the wall crossing morphisms at the walls $7a-b=3$ and $15a+b=8$. These two walls are divisorial contractions and the rest walls are both flips. Moreover, they are also the walls where the surfaces $\mathbb{P}(1,1,4)$ and $X_{26}$ 
first appears in the K-moduli space $\overline{P}^K_{a,b}$ respectively:

\begin{thm}(=Theorem \ref{type1wallcrossing})
    Let $(a_1,b_1)$ be a pair of positive rational numbers such that $7a_1-b_1=3$ and $5a_1+b_1\le3$. Fix any another two pairs of rational numbers $(a_{1}^{-},b_{1}^{-})$ and $(a_{1}^{+},b_{1}^{+})$ such that $7a_{1}^{-}-b_{1}^{-}<3$ and $7a_{1}^{+}-b_{1}^{+}>3$. Moreover ,we assume that they are lying in the two distinct chambers which has a common face along the the critical line $7a-b=3$. 
    \begin{enumerate}
        \item The general point of the center $\Sigma_{a_1,b_1}$ parameterizes the pair $(\mathbb{P}(1,1,4);Q^{(1)},L^{(1)})$, where $L^{(1)}$ is defined by $\left\{xy=0\right\}$ or $\left\{\text{$l_1l_2=0$ | $l_i\neq x$ and $y$}\right\}$. In particular, $\dim\Sigma_{a_1,b_1}=1$.
        \item The wall crossing morphism $\phi^{-}_{a_1,b_1}:\overline{P}^K_{a_{1}^{-},b_{1}^{-}}\rightarrow\overline{P}^K_{a_1,b_1}$ is isomorphism which take the points $[(\mathbb{P};Q_{0},L)]$ to $[(\mathbb{P}(1,1,4);Q^{(1)},\{xy=0\})]$ and takes the point $[(\mathbb{P};Q_{0},L')]$ to $[(\mathbb{P}(1,1,4);Q^{(1)},\{L^{(2)'}=0\})]$.
        \item The wall crossing morphism $\phi^{+}_{a_1,b_1}:\overline{P}^K_{a_1^+,b_1^+}\rightarrow\overline{P}^K_{a_1,b_1}$ is a divisorial contraction. The exceptional divisor $E^{+}_{a_1,b_1}$ parameterizes pair $(\mathbb{P}(1,1,4);Q,L)$ such that $(Q,L)$ is GIT-polystable with slope $(a_1^+,b_1^+)$ in the sense of definition \ref{GITonP114}.
    \end{enumerate}
\end{thm}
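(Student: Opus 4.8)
The plan is to prove Theorem \ref{type1wallcrossing} by analyzing the wall-crossing across the critical line $7a-b=3$, building on the general K-stability theory of \cite{ADL19} and \cite{zhou2023chamberdecompositionksemistabledomains}. I would first recall the fundamental wall-crossing structure: crossing a wall produces a diagram $\overline{P}^K_{a_1^-,b_1^-}\xrightarrow{\phi^-}\overline{P}^K_{a_1,b_1}\xleftarrow{\phi^+}\overline{P}^K_{a_1^+,b_1^+}$, where both $\phi^\pm$ are proper birational, and the center $\Sigma_{a_1,b_1}$ consists of the strictly K-polystable pairs that appear newly at the wall. The key input is that as we cross $7a-b=3$, the surface $\mathbb{P}(1,1,4)$ enters the moduli space for the first time (per the second table of Theorem 2 / Table \ref{Kwall2}).

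For part (1), I would identify the critical pairs on $\mathbb{P}(1,1,4)$ from the wall $7a-b=3$ in Table \ref{Kwall2}: the curve $Q^{(1)}$ is $\{z^2xy=0\}$ and the line $L^{(1)}$ is $\{xy=0\}$ or $\{l_1l_2=0\}$ with $l_i$ a general linear form in $x,y$. To compute $\dim\Sigma_{a_1,b_1}=1$, I would study the automorphism-orbit structure: the relevant moduli of such pairs reduces to the cross-ratio-type parameter recording the position of the two lines $l_1,l_2$ (equivalently, two points on a $\bP^1$) modulo the stabilizer of the fixed configuration, yielding a one-dimensional family. The strict K-polystability of each member is verified by an explicit computation of the $\delta$-invariant or via a torus-equivariant K-stability argument (using that $\mathbb{P}(1,1,4)$ admits a $(\bbC^*)^2$-action), showing these are precisely the closed points fixed under the degenerating one-parameter subgroup.

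For part (2), to show $\phi^-$ is an isomorphism, I would argue that on the $(a_1^-,b_1^-)$-side no new surface degeneration has appeared, so $\overline{P}^K_{a_1^-,b_1^-}$ still parameterizes pairs on $\mathbb{P}^2$; the K-stability comparison of \cite{ADL19} (wall-crossing via interpolation of the $\beta$-invariant) identifies each $\mathbb{P}^2$-pair $[(\mathbb{P}^2;Q_0,L)]$ on the $(a_1^-,b_1^-)$-side with its unique K-polystable degeneration to a $\mathbb{P}(1,1,4)$-pair at the wall. Concretely, I would exhibit the explicit test configuration (a weighted blow-up / degeneration of $\mathbb{P}^2$ to $\mathbb{P}(1,1,4)$) realizing this degeneration, check that it has $\mathrm{Fut}=0$ exactly at $7a-b=3$, and verify that the induced map on closed points is bijective, with the two cases $L$ versus $L'$ matching the two boundary configurations $\{xy=0\}$ and $\{l_1l_2=0\}$. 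Since both spaces are normal and the map is a proper bijective morphism that is an isomorphism on the open locus, it is an isomorphism.

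For part (3), the divisorial-contraction claim is where I expect the main obstacle. On the $(a_1^+,b_1^+)$-side, the surface $\mathbb{P}(1,1,4)$ persists and its pairs acquire positive-dimensional moduli governed by the GIT of $(Q,L)$ on $\mathbb{P}(1,1,4)$ with slope $(a_1^+,b_1^+)$, per Definition \ref{GITonP114}; this GIT quotient forms the exceptional divisor $E^+_{a_1,b_1}$, which contracts to the lower-dimensional center $\Sigma_{a_1,b_1}$ under $\phi^+$. The hard part is verifying the \emph{divisorial} (codimension-one) nature of $E^+$ and matching the K-moduli boundary stratum precisely with the GIT-polystable locus: this requires identifying the K-semistable pairs on $\mathbb{P}(1,1,4)$ with GIT-semistable ones via a local VGIT-type correspondence (as in \cite{ADL19}), computing the dimension of the parameter space of quintic-plus-line configurations on $\mathbb{P}(1,1,4)$ modulo $\Aut(\mathbb{P}(1,1,4))$, and confirming this exceeds $\dim\Sigma_{a_1,b_1}=1$ by exactly one. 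I would close the argument by invoking normality of $\overline{P}^K_{a_1^+,b_1^+}$ together with the properness and birationality of $\phi^+$ to conclude it is a divisorial contraction with the asserted exceptional locus.
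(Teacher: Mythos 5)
Your overall architecture matches the paper's: part (1) is read off from the classification of critical pairs (Theorem \ref{4.8}), $\phi^-$ is analyzed via the explicit degeneration of $(\mathbb{P}^2;2C+L,L')$ to a $\mathbb{P}(1,1,4)$-pair by degenerating to the normal cone of the conic (this is Theorem \ref{P2toP114}), and $E^+_{a_1,b_1}$ is matched with the GIT of pairs $(Q,L)$ on $\mathbb{P}(1,1,4)$ in the sense of Definition \ref{GITonP114}. But the two quantitative steps that actually carry the proof are missing or wrong. For part (2) you propose to ``verify that the induced map on closed points is bijective'' and then invoke Zariski's main theorem; you give no mechanism for injectivity of $\phi^-$ over the center, i.e.\ for showing that each critical pair $(\mathbb{P}(1,1,4);\{z^2xy=0\},L^{(1)})$ is the polystable limit of exactly \emph{one} $(a_1^-,b_1^-)$-K-polystable pair. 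Exhibiting one explicit degeneration per point of $\Sigma_{a_1,b_1}$ gives surjectivity and well-definedness, not uniqueness of preimages. The paper never checks bijectivity directly; instead it uses normality together with the fiber-dimension relation coming from the local VGIT presentation of the wall crossing (\cite{ADL19}, combined with \cite{VGIT} or \cite{GITandFlips}),
\[
d^-+d^++1=\codim(\Sigma_{a_1,b_1})=13,
\]
where $d^\pm$ denotes the dimension of the fibers of $E^\pm_{a_1,b_1}$ over $\Sigma_{a_1,b_1}$. Once $d^+=12$ is computed, this relation forces $d^-=0$, and normality then yields that $\phi^-$ is an isomorphism. This interlocking of parts (2) and (3) through one dimension formula is the key idea absent from your proposal; without it you would have to classify all $(a_1^-,b_1^-)$-semistable pairs degenerating to each critical pair, which you do not do.

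For part (3) your dimension count is incorrect as stated: you require that the moduli of pairs on $\mathbb{P}(1,1,4)$ ``exceeds $\dim\Sigma_{a_1,b_1}=1$ by exactly one,'' but divisoriality requires $\dim E^+_{a_1,b_1}=\dim\overline{P}^K_{a_1^+,b_1^+}-1=13$, i.e.\ fibers of $E^+_{a_1,b_1}\rightarrow\Sigma_{a_1,b_1}$ of dimension $12$; the ``exactly one'' is a codimension in the $14$-dimensional moduli space, not an excess over $\dim\Sigma_{a_1,b_1}$. The paper gets this by identifying the fiber over $[(\mathbb{P}(1,1,4);\{z^2xy=0\},\{xy=0\})]$ with the $12$-dimensional GIT quotient $\mathbf{P}'_{10}\times\{xy=0\}\q\mathbb{G}_m$, using the theorem preceding Theorem \ref{type1wallcrossing} (K-(poly/semi)stability at slope $(a_1^+,b_1^+)$ is equivalent to GIT-(poly/semi)stability) together with the normal form of $Q$ from Theorem \ref{whenp114appear}. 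That identification in turn requires first excluding the other surfaces from $E^+_{a_1,b_1}$: the paper rules out $X_{26}$ and $\mathbb{P}(1,4,25)$ by the thresholds $15a+b\ge8$ and $115a+11b\ge63$ of Theorems \ref{whenX26appear} and \ref{whenP1425appear}, and rules out $\mathbb{P}^2$ by interpolation of K-stability; your proposal tacitly assumes $E^+_{a_1,b_1}$ consists only of $\mathbb{P}(1,1,4)$-pairs.
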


\begin{thm}(=Theorem \ref{type2wallcrossing})
    Let $(a_2,b_2)$ be a pair of positive rational numbers such that $15a_2+b_2=8$ and $5a_1+b_1\le3$. Fix any another two pairs of rational numbers $(a_{1}^{-},b_{1}^{-})$ and $(a_{1}^{+},b_{1}^{+})$ such that $15a_{1}^{-}+b_{1}^{-}<8$ and $15a_{1}^{+}+b_{1}^{+}>8$. Moreover ,we assume that they are lying in the two distinct chambers which has common face along the the critical line $15a+b=8$.
    \begin{enumerate}
        \item The wall crossing morphism $\phi^{-}_{a_2,b_2}:\overline{P}^K_{a_{2}^{-},b_{2}^{-}}\rightarrow\overline{P}^K_{a_2,b_2}$ is isomorphism which take the points $[(\mathbb{P};Q_{2},L)]$ to $[(X_{26};Q^{(2)},\{xy^2=0\})]$.
        \item The wall crossing morphism $\phi^{+}_{a_2,b_2}:\overline{P}^K_{a_{2}^{+},b_{2}^{+}}\rightarrow\overline{P}^K_{a_2,b_2}$ is a divisorial contraction. The exceptional divisor $E^{+}_{a_2,b_2}$ parametrizes the pair of curves on $X_{26}$ of the form $(\left\{w=g(x,y)\right\},xy^2=h(x,y))$ where $g\neq0$ and $g$ does not contain the term $xy^{12}$, $h$ is degree $5$ of the form $\left\{\lambda x^5+\mu x^3y\right\}$.
    \end{enumerate}
\end{thm}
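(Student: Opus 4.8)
The strategy parallels the type-1 wall crossing of Theorem \ref{type1wallcrossing}: I would study the wall $15a+b=8$ by a local variation-of-GIT (VGIT) computation around the critical pair $(X_{26};Q^{(2)},\{xy^2=0\})$, following the wall-crossing formalism of \cite{ADL19} and \cite{zhou2023chamberdecompositionksemistabledomains}. By Table \ref{Kwall2} this pair is strictly K-polystable exactly along $15a+b=8$, so its reduced automorphism group is positive-dimensional; the first step is to pin down a one-parameter subgroup $\bG_m\subset\Aut(X_{26},Q^{(2)})$ together with its action on the coordinates $x,y,w$, since this torus produces the strict semistability and the sign of its weight on the CM line bundle is what flips across the wall. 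By the Luna-type slice theorem for K-polystable points \cite{ADL19}, a neighborhood of $[(X_{26};Q^{(2)},\{xy^2=0\})]$ in $\overline{P}^K_{a,b}$ is then the GIT quotient of a slice $T$ by $\bG_m$, where $T$ records the $\bQ$-Gorenstein deformations of $X_{26}$ and the deformations of $Q=\{w=g(x,y)\}$ and $L=\{xy^2=h(x,y)\}$, linearized by the CM line bundle at slope $(a,b)$. Writing the tangent space as a sum of $\bG_m$-weight spaces $T^+\oplus T^0\oplus T^-$, the two chambers adjacent to the wall give $T\sslash_{+}\bG_m$ and $T\sslash_{-}\bG_m$, and crossing $15a+b=8$ is exactly the VGIT wall at which the relevant character changes sign.

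For part (1), the key point is that the minus chamber produces no positive-dimensional modification at this center. I would show that the minus linearization makes $T^-$ contribute no new semistable orbits beyond the central one, so that $\phi^-_{a_2,b_2}$ is an isomorphism of coarse spaces; the only effect of crossing from the minus side to the wall is that the polystable representative of a single S-equivalence class switches from the smoothing $(\bP^2;Q_2,L)$ to its degeneration $(X_{26};Q^{(2)},\{xy^2=0\})$. I would exhibit this $\bP^2\rightsquigarrow X_{26}$ degeneration explicitly as the test configuration whose weight reproduces the critical line $15a+b=8$ of Table \ref{Kwall2}, which simultaneously certifies that $X_{26}$ first appears on this wall.

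For part (2), the plus chamber is where the complementary weight space $T^+$ becomes stable, so that a genuine positive-dimensional family of curve pairs on $X_{26}$ now enters the moduli space. I would identify the resulting $\bG_m$-stable locus with the pairs $(\{w=g(x,y)\},\{xy^2=h(x,y)\})$ constrained by $g\neq0$ (and, after absorbing the leading semi-invariant of $g$, with $g$ free of the monomial $xy^{12}$) together with $h=\lambda x^5+\mu x^3y$; a weight count on $T^+$ then shows this locus is of codimension one, so it is the exceptional divisor $E^+_{a_2,b_2}$. Because each such pair still degenerates to $(X_{26};Q^{(2)},\{xy^2=0\})$ under the same $\bG_m$, they are all S-equivalent on the wall, so $\phi^+_{a_2,b_2}$ collapses $E^+_{a_2,b_2}$ onto the center and is therefore a divisorial contraction.

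The main obstacle is the K-stability input on the singular surface $X_{26}$: one must compute the $\beta$-invariant (equivalently verify $\delta=1$) for the $\bG_m$ test configuration and rule out every other destabilizing divisorial valuation, so as to certify that the wall is precisely $15a+b=8$ and that the curves in part (2) are exactly the GIT-polystable---not merely semistable---configurations for the plus-slope linearization. Matching the abstract VGIT weights with the explicit CM linearization, so that their sign flips exactly across the wall, is the delicate bookkeeping step; once the torus action on $(g,h)$ is fixed, the isomorphism-versus-divisorial-contraction dichotomy follows from the standard $\bG_m$-VGIT picture.
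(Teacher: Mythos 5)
Your proposal is correct in outline and follows essentially the same route as the paper: the paper's own proof of Theorem \ref{type2wallcrossing} is a one-line appeal to the analogous argument of Theorem 7.1 in \cite{ADL19}, which is exactly the local-VGIT-presentation, weight-space decomposition, and dimension-count argument you describe, and which the paper itself spells out in detail for the type (\uppercase\expandafter{\romannumeral1}) wall in Theorem \ref{type1wallcrossing}. The inputs you flag as the main obstacles --- the Futaki/$\beta$ computations on $X_{26}$ certifying that the wall is exactly $15a+b=8$, and the explicit degeneration of $(\mathbb{P}^2;Q_2,L_2)$ to $(X_{26};\{w=0\},\{xy^2=0\})$ --- are precisely what the paper establishes beforehand, in Section 4 and in the theorem immediately preceding Theorem \ref{type2wallcrossing}, so your sketch fills in the cited argument rather than diverging from it.
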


\subsection*{Structure of the paper}
In section 2 we will recall some basic concepts and important results in the theory of $K$-stability;
In section 3 we will give a brief review about normalized volume and $T$-singularity which enable us determine the K-semistable degenerations in the moduli space we consider;
Then in section 4 we will use the technique about complexity one varieties to figure out all critical pairs and critical lines, which describe the wall-chamber structure of the moduli space $\overline{P}^K_{(a,b)}$.
Finally, in section 5 we will describe the wall crossing morphisms which are divisorial contractions.

\subsection*{Acknowledgments}  The author is grateful to Yuchen Liu for introducing this problem to him. The authors also benefit a lot from the discussion with Zhiyuan Li and Fei Si. He also thanks Chuyu Zhou, Xun Zhang and Haitao Zou for reading the drafts.

\section{Preliminaries}

\begin{defn}
    Let X be a projective variety. Let $L$ be an ample line bundle on $X$. A test configuration $(\mathcal{X};\mathcal{L})/\mathbb{A}^1$ of $(X;L)$ consists of the following data:
     \begin{itemize}
         \item a variety $\mathcal{X}$ together with a flat projective morphism $\pi:\mathcal{X}\longrightarrow\mathbb{A}^1$;
         \item a $\pi$-ample line bundle $\mathcal{L}$ on $\mathcal{X}$;
         \item a $\mathbb{G}_m$-action on $(\mathcal{X;L})$ such that $\pi$ is $\mathbb{G}_m$-equivariant with respect to the standard action of $\mathbb{G}_m$ on $\mathbb{G}_m$ on $\mathbb{A}^1$ via multiplication; 
         \item $(\mathcal{X}\setminus \mathcal{X}_0, \mathcal{L}|_{\mathcal{X}\setminus \mathcal{X}_0})$ is $\mathbb{G}_m$-equivariantly isomorphic to $(X;L)\times(\mathbb{A}^1\setminus \{0\})$.
     \end{itemize}
     If $(X, D=\sum^k_{i=1}c_iD_i)$ is a projective log pair, then the test configuration $(\mathcal{X},\mathcal{D};\mathcal{L})$ of $(X,D;L)$ should adding the following data:
      \begin{itemize}
          \item a form sum $\mathcal{D}=\sum^k_{i=1}c_i\mathcal{D}_i$ of codimension one closed integral subschemes $\mathcal{D}_i$ of $\mathcal{X}$ such that $\mathcal{D}_i$ is the Zariski closure of $D_i\times\mathbb{A}^1\setminus\{0\}$ under the identification between $\mathcal{X}\setminus\mathcal{X}_0$ and $X\times(\mathbb{A}^1\setminus\{0\})$.
      \end{itemize}
\end{defn}

\begin{defn}
   Notation as above.
      \begin{enumerate}
       \item A test configuration $(\mathcal{X},\mathcal{L})/\mathbb{A}^1$ is called a normal test configuration if $\mathcal{X}$ is normal; 
       \item A normal test configuration is called a product test configuration if
       \begin{equation*}
        (\mathcal{X};\mathcal{L})\cong(X\times\mathbb{A}^1,pr^*_1L\otimes\mathcal{O}_{\mathcal{X}}(k\mathcal{X}_0))
        \end{equation*}
       for some $k\in\mathbb{Z}$; 
       \item A product test configuration is called a trivial test figuration if the above isomorphism is $\mathbb{G}_m$-equivariant with respect to the trivial $\mathbb{G}_m$-action on $X$ and the standard $\mathbb{G}_m$-action on $\mathbb{A}^1$ via multiplication.
        \end{enumerate}
\end{defn}

\begin{defn}
    Let $w_k$ be the total weight of the $\mathbb{G}_m$-action on the determinant line $det~H^0(X_0,L_0^{\otimes m})$. By the equivariant Riemann-Roch Theorem,  
\begin{equation*}
    w_k=b_0k^{n+1}+b_1k^n+O(k^{n-1}).
\end{equation*}
 For a sufficiently divisible $k\in\mathbb{N}$, we have
\begin{equation*}
    N_k:=dimH^0(X,\mathcal{O}_X(-kK_X))=a_0k^{n}+a_1k^{n-1}+O(k^{n-2}).
\end{equation*}
So we can expand
\begin{equation*}
    \frac{w_k}{kN_k}=F_0+F_1k^{-1}+O(k^{-2})
\end{equation*}
Under the above notation, the generalized Futaki invariant of the test configuration $(\mathcal{X};\mathcal{L})$ is defined to be 
        \begin{equation*}
        Fut(\mathcal{X};\mathcal{L})=-F_1=\frac{a_1b_0-a_0b_1}{a_0^2}
        \end{equation*}

    For projective log pair $(X,D=\sum^k_{i=1}c_iD_i)$, we should add the contribution from the boundary:  we denote the relative Chow weight of $(\mathcal{X},\mathcal{D};\mathcal{L})$ by $CH(\mathcal{X},\mathcal{D};\mathcal{L}) $ and the generalized Futaki invariant of $(\mathcal{X},\mathcal{D};\mathcal{L})$ is defined as 
        \begin{equation*}
Fut(\mathcal{X},\mathcal{D};\mathcal{L})=Fut(\mathcal{X},\mathcal{L})+CH(\mathcal{X},\mathcal{D};\mathcal{L}).
        \end{equation*}
\end{defn}

\begin{defn}
    Let $(X,D)$ be a log Fano pair. Let $L$ be an ample line bundle on $X$ such that for some $r\in\mathbb{Q}_{>0}$ such that $L\sim_{\mathbb{Q}_{>0}}-r(K_X+D)$, Then the log Fano pair $(X,D)$ is said to be:
    \begin{itemize}
        \item K-semistable if $Fut(\mathcal{X},\mathcal{D};\mathcal{L})\ge0$ for any normal test configuration $(\mathcal{X},\mathcal{D};\mathcal{L})$ and any $r\in\mathbb{Q}_{>0}$ such that $L$ is Cartier;
        \item K-polystable
        if it is K-semistable and $Fut(\mathcal{X},\mathcal{D};\mathcal{L})=0$ for a normal test configuration $(\mathcal{X},\mathcal{D};\mathcal{L})$ if and only if it is a product test configuration; 
        \item K-stable if it is K-semistable and $Fut(\mathcal{X},\mathcal{D};\mathcal{L})=0$ for a normal test configuration $(\mathcal{X},\mathcal{D};\mathcal{L})$ if and only if it is a trivial test configuration.
        \end{itemize}
\end{defn}

Then we recall the valuative criterion of K-stability.

Let $(X,D)$ be a log Fano pair. We call $E$ is a prime divisor over $(X,D)$ if it contained in the birational model of $X$, i.e. there exists a normal projective variety $Y$ and a projective birational morphism $f:Y\rightarrow X$, such that $E\subset Y$.

\begin{defn}
    Notation as above. Then we can associate two functional to $E$ as follows:
    \begin{itemize}
        \item $A_{(X,D)}(E):=1+\ord_{E}(K_{Y}-f^*(K_X+D))$;
        \item $S_{(X,D)}(E):=\dfrac{1}{\vol(-K_X-D)}\int^{+\infty}_{0}\vol(-f^*(K_X+D)-tE)dt$
    \end{itemize}
    Let $\beta_{(X,D)}(E):=A_{(X,D)}(E)-S_{(X,D)}(E)$.
\end{defn}

\begin{thm}(\cite{Fujita},\cite{Li17},\cite{BLX19})
    Let $(X,D)$ be a log Fano pair. Then the following holds:
    \begin{itemize}
        \item $(X,D)$ is K-semistable$\Longleftrightarrow$ $\beta_{(X,D)}(E)\ge0$ for any prime divisor $E$ over $X$;
        \item $(X,D)$ is K-stable$\Longleftrightarrow$ $\beta_{(X,D)}(E)>0$ for any prime divisor $E$ over $X$;
    \end{itemize}
\end{thm}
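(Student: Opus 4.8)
The plan is to set up a dictionary between normal test configurations and divisorial valuations over $X$, under which the generalized Futaki invariant corresponds to the invariant $\beta_{(X,D)}$, and then to cut down the test for K-(semi)stability to the smallest possible class of configurations so that the two sides match term by term.

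First I would record the intersection-theoretic form of the Futaki invariant. Compactifying a normal test configuration $(\mathcal{X},\mathcal{D};\mathcal{L})$ over $\mathbb{P}^1$ (gluing in a trivial fiber at $\infty$) to $(\overline{\mathcal{X}},\overline{\mathcal{D}};\overline{\mathcal{L}})$, and normalizing so that $\mathcal{L}\sim_{\mathbb{Q}}-(K_{\mathcal{X}/\mathbb{A}^1}+\mathcal{D})$, the generalized Futaki invariant of the definition above takes the form
\begin{equation*}
Fut(\mathcal{X},\mathcal{D};\mathcal{L})=\frac{1}{(n+1)V}\Bigl(\overline{\mathcal{L}}^{\,n+1}+(n+1)\,\overline{\mathcal{L}}^{\,n}\cdot(K_{\overline{\mathcal{X}}/\mathbb{P}^1}+\overline{\mathcal{D}})\Bigr),
\end{equation*}
where $V=\vol(-K_X-D)$. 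This follows by matching the equivariant Riemann--Roch expansion of $w_k/(kN_k)$ against the Knudsen--Mumford expansion of $\overline{\mathcal{L}}$; its value is that it rewrites $Fut$ as a sum of intersection numbers that transform predictably under birational modifications of $\mathcal{X}$.

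Next I would carry out the reduction to special test configurations following Li--Xu. Running a relative MMP (with scaling) on $\mathcal{X}$ over $\mathbb{A}^1$ transforms an arbitrary normal test configuration into a special one, with irreducible, klt log Fano central fiber $(\mathcal{X}_0,\mathcal{D}_0)$, in such a way that no step increases $Fut$. Hence $(X,D)$ is K-semistable if and only if $Fut\ge 0$ for all special test configurations, and K-stable if and only if $Fut>0$ for every nontrivial special test configuration. I expect this to be the main obstacle: it requires finite generation of the relevant canonical algebras and the existence of the individual MMP steps, and it is exactly the point where one leaves the safe ground of explicit intersection computations. It is also the reason the criterion is valid for the generalized Futaki invariant rather than only for test configurations with an a priori nice total space.

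Finally I would match the two invariants. For a special test configuration the central fiber $\mathcal{X}_0$, viewed as a prime divisor in $\mathcal{X}$, defines through $\ord_{\mathcal{X}_0}$ a divisorial valuation on $K(X)=K(\mathcal{X}\setminus\mathcal{X}_0)$ with an associated prime divisor $E_v$ over $X$; Fujita's computation, fed by the intersection formula above, then identifies
\begin{equation*}
Fut(\mathcal{X},\mathcal{D};\mathcal{L})=\frac{1}{V}\,\beta_{(X,D)}(E_v),
\end{equation*}
which immediately yields the implication $\beta\ge0$ (resp. $>0$) $\Rightarrow$ K-semistable (resp. K-stable). For the converse, given a divisor $E$ over $X$ that is \emph{dreamy}, so that the bigraded ring $\bigoplus_{m,j}H^0\bigl(X,-mr(K_X+D)-jE\bigr)$ is finitely generated, the Rees construction on the associated filtration produces a genuine special test configuration with $Fut=\tfrac1V\beta_{(X,D)}(E)$, so K-(semi)stability forces the sign of $\beta_{(X,D)}(E)$. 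To upgrade from dreamy divisors to all prime divisors $E$ I would invoke the Blum--Liu--Xu approximation: using the Okounkov-body description of $S_{(X,D)}$ together with lower semicontinuity of $A_{(X,D)}$ and $S_{(X,D)}$ on the space of valuations, the infimum of the normalized invariant $A_{(X,D)}/S_{(X,D)}$ is approached by dreamy divisors, so nonnegativity (resp. positivity) of $\beta_{(X,D)}$ on dreamy divisors propagates to every divisor over $X$. Together with the MMP reduction, this finite-generation/approximation passage is the delicate heart of the argument.
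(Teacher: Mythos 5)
The paper itself never proves this theorem; it is quoted as a known result with the proof outsourced to the cited references (Fujita, Li, Blum--Liu--Xu), so your proposal has to be measured against the arguments in that literature. Your architecture --- intersection formula for $Fut$, Li--Xu MMP reduction to special test configurations, the Rees/dreamy-divisor dictionary identifying $Fut$ with $\beta$, and a final passage from dreamy divisors to arbitrary divisors --- is indeed the standard one, but two of your steps are not correct as written.

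First, the intersection formula is misquoted. With the normalization $\mathcal{L}\sim_{\mathbb{Q}}-(K_{\mathcal{X}/\mathbb{A}^1}+\mathcal{D})$, the Wang--Odaka formula reads
\begin{equation*}
Fut(\mathcal{X},\mathcal{D};\mathcal{L})=\frac{1}{V}\left(\frac{n}{n+1}\,\overline{\mathcal{L}}^{\,n+1}+\overline{\mathcal{L}}^{\,n}\cdot\bigl(K_{\overline{\mathcal{X}}/\mathbb{P}^1}+\overline{\mathcal{D}}\bigr)\right),
\end{equation*}
so the coefficient of $\overline{\mathcal{L}}^{\,n+1}$ is $\tfrac{n}{n+1}$, not your $\tfrac{1}{n+1}$. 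The discrepancy is detected by a twisted trivial configuration: on $X\times\mathbb{P}^1$ with $\overline{\mathcal{L}}=p_1^*L+c\,\mathcal{X}_0$ one has $\overline{\mathcal{L}}^{\,n+1}=(n+1)cV$ and $\overline{\mathcal{L}}^{\,n}\cdot(K_{\overline{\mathcal{X}}/\mathbb{P}^1}+\overline{\mathcal{D}})=-ncV$, so the correct formula gives $Fut=0$ (as it must, since twisting only shifts $w_k/(kN_k)$ by the constant $c$), while your formula gives $(1-n)c\neq0$ for $n\ge2$.

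Second, and more seriously, the final step --- propagating positivity of $\beta$ from dreamy divisors to all divisors via ``lower semicontinuity'' and approximation --- cannot prove the K-stable half of the theorem. Approximation arguments only preserve weak inequalities: they can show that K-semistability forces $\beta_{(X,D)}(E)\ge0$ for every $E$ (and even there, Fujita's actual argument truncates the filtration induced by $E$ itself into finitely generated pieces, producing honest test configurations from $E$, rather than approximating $E$ by a sequence of other dreamy divisors), but no limiting process can upgrade $\ge0$ to $>0$ for a non-dreamy $E$. The genuine content of the citation to \cite{BLX19} at this point is precisely the missing ingredient: if $(X,D)$ is K-semistable and $\beta_{(X,D)}(E)=0$, then $E$ is automatically dreamy --- it is an lc place of a $\mathbb{Q}$-complement, via boundedness of complements --- and hence the Rees construction yields a nontrivial special test configuration with $Fut=0$, contradicting K-stability. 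Without this finite-generation/complement input, your argument proves the semistability equivalence but not the stability equivalence, so the theorem as stated does not follow from your sketch.
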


When we consider the Fano variety with some torus action, it can be easier to check the K-stability of these pairs by means of the torus invariant divisors.

\begin{defn}
    Let $(X,D)$ be a log Fano pair. Let $\mathbb{T}\subset\aut(X,D)$ be a maximal torus. We call $\dim X-\dim T$ the complexity of the $\mathbb{T}$-action.
\end{defn}

\begin{defn}
    Let $E$ be a $\mathbb{T}$-invariant divisor over $X$. It is said to be vertical if a maximal $\mathbb{T}$-orbit in $E$ has the same dimension as the torus $\mathbb{T}$. Otherwise, the divisor $E$ is said to be horizontal.
\end{defn}

Given a log Fano pair $(X,D)$ with a maximal torus $\mathbb{T}$ action. Let $L=\mathcal{O}_{X}(-lK_X)$, here $l\gg0$ is an integer. For every one parameter subgroup $\lambda$ on $X$ and its canonical linearization for $L$, we set
\[
w_k(\lambda)=\sum_{m}m\cdot\dim(H^0(X,L^{\otimes k})_m),
\]
where $H^0(X,L^{\otimes k})_{m}$ is the subspace of the semi-invariant sections of $\lambda$-weight $m$.

\begin{defn}
    The function
    \[
    Fut_{(X,D)}(\lambda):=-\lim_{k\rightarrow\infty}\frac{w_{k}(\lambda)}{k\cdot l_k\cdot l},
    \]
    is called the Futaki character of the log Fano pair $(X,D)$.
\end{defn}

\begin{thm}\label{complexity1}(\cite{Calabi})
    Suppose $(X,D)$ is a log Fano pair, the complexity of the $\mathbb{T}$-action is 1. Then $(X,D)$ is K-polystable if and only if the following conditions hold:
    \begin{itemize}
        \item $\beta_{(X,D)}(E)>0$ for every vertical $\mathbb{T}$-invariant prime divisor $E$ on $X$;
        \item $\beta_{(X,D)}(E)=0$ for every horizontal $\mathbb{T}$-invariant prime divisor $E$ on $X$;
        \item $Fut_{(X,D)}(\lambda)=0$ for any one parameter subgroup $\lambda$ of $\mathbb{T}$.
    \end{itemize}
\end{thm}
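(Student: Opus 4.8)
The plan is to combine the valuative criterion stated above with two further inputs: the reduction of K-(poly)stability to $\mathbb{T}$-equivariant test configurations, and the combinatorial description of $\mathbb{T}$-invariant valuations available in complexity one. First I would invoke equivariant K-stability: since $\mathbb{T}\subset\aut(X,D)$ is a torus, the pair is K-semistable (resp. K-polystable) if and only if it is so against all $\mathbb{T}$-equivariant test configurations. On the valuative side this means $\beta_{(X,D)}(E)\ge0$ need only be checked on $\mathbb{T}$-invariant prime divisors $E$ over $X$, while the product configurations relevant to polystability are exactly those induced by one-parameter subgroups $\lambda\subset\mathbb{T}$. For such $\lambda$ the Futaki invariant equals the linear functional $Fut_{(X,D)}(\lambda)$; as $Fut_{(X,D)}(-\lambda)=-Fut_{(X,D)}(\lambda)$, K-semistability already forces $Fut_{(X,D)}(\lambda)=0$ for all $\lambda$, which is the third condition and is equivalent to the vanishing of the Futaki invariant on every product configuration.

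Second, I would classify the $\mathbb{T}$-invariant divisorial valuations via the Altmann--Hausen polyhedral-divisor presentation of the complexity-one variety $X$: every invariant prime divisor over $X$ is either vertical or horizontal, and the invariant valuations organize into a polyhedral complex fibered over the rational quotient curve $X\q\mathbb{T}\cong\mathbb{P}^1$. The quantitative heart is to write $A_{(X,D)}$ and $S_{(X,D)}$, hence $\beta_{(X,D)}$, as explicit convex piecewise-linear functions on this complex, following the Futaki-invariant computations of Ilten--S\"uss. The goal I am aiming for is that $\beta_{(X,D)}$ attains its infimum over invariant valuations at the vertices of the complex, and that these vertices are realized by the vertical and horizontal $\mathbb{T}$-invariant prime divisors already present on $X$ itself; this is what collapses the a priori infinite valuative test down to the finite list of invariant divisors on $X$ appearing in the statement.

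Third, I would match the equality behaviour with the product/non-product dichotomy. The structural claim to establish is that a horizontal invariant divisor induces an isotrivial degeneration, with central fibre $\mathbb{G}_m$-equivariantly isomorphic to $X$, whereas a vertical invariant divisor induces a genuinely non-product special test configuration. Using the identity $\beta_{(X,D)}(E)=Fut$ of the special configuration associated to $E$, K-polystability forces $\beta_{(X,D)}(E)=0$ on horizontal divisors (they are products, so the Futaki invariant vanishes) and $\beta_{(X,D)}(E)>0$ on vertical divisors (equality would exhibit a non-product configuration of zero Futaki invariant, contradicting the polystable definition). Conversely, given the three conditions, the convexity from the previous step upgrades them to $\beta_{(X,D)}(E)\ge0$ for every invariant $E$, hence K-semistability, while the only invariant configurations of zero Futaki invariant are the horizontal and torus ones, which are products; this yields K-polystability.

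The main obstacle is the quantitative second step: establishing the convexity of $\beta_{(X,D)}$ on the space of $\mathbb{T}$-invariant valuations and proving that its minimizers are attained by actual divisors on $X$, together with the clean separation of horizontal (product) from vertical (non-product) configurations and the bookkeeping of the boundary contributions over the special points of $\mathbb{P}^1$. By contrast, the equivariant reduction and the linear algebra of the Futaki character are comparatively formal.
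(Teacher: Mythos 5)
A preliminary remark on the comparison itself: the paper contains \emph{no} proof of this theorem --- it is quoted, with the citation \cite{Calabi}, from the literature (it is the complexity-one K-polystability criterion of the Calabi-problem book, which rests on Zhuang's equivariant K-stability theorem and the Ilten--S\"uss analysis of torus actions of complexity one). So your proposal can only be measured against that proof. Your first and third steps do reproduce its architecture correctly: reduction to $\mathbb{T}$-invariant valuations, the vertical/horizontal dichotomy, the identification of horizontal divisorial valuations with the valuations $v_\lambda$ induced by one-parameter subgroups $\lambda$ of $\mathbb{T}$ (so that $\beta(v_\lambda)=Fut(\lambda)$, these degenerations are products, and maximality of $\mathbb{T}$ controls which configurations are products), and the observation that semistability alone forces $Fut\equiv 0$ on $\mathbb{T}$.

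The genuine gap is your second step, exactly where you locate the ``quantitative heart''. You propose to collapse the test to invariant prime divisors \emph{on} $X$ by showing that $\beta$ is convex piecewise-linear on the space of invariant valuations and ``attains its infimum over invariant valuations at the vertices''. Convex functions attain suprema, not infima, at extreme points; for your argument you would need concavity, and $\beta$ is genuinely convex on the relevant cones. Concretely, for the pairs $(\mathbb{P}^2,aQ+bL)$ of this paper, the monomial valuations $\mathrm{wt}_w$ at a $\mathbb{T}$-fixed point satisfy $\beta(\mathrm{wt}_w)=\tfrac{5a+b}{3}(w_1+w_2)-a\min_{(i,j)\in \mathrm{Newt}(Q)}(iw_1+jw_2)-b\min_{(i,j)\in \mathrm{Newt}(L)}(iw_1+jw_2)$, which is linear plus a sum of convex terms (minus a minimum of linear functions is convex). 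Hence the infimum of $\beta$ over such a cone sits on interior kink rays, not at the extreme rays given by the divisors on $X$; checking divisors on $X$ gives upper bounds on $\beta$ in the interior, never lower bounds. What actually makes the criterion work is a different mechanism that your sketch does not supply: because $Q$ and $L$ are $\mathbb{T}$-invariant, all monomials of their local equations at a fixed point have equal $\mathbb{T}$-weight, so their Newton polytopes lie on segments orthogonal to the weight vectors, and therefore the \emph{only} kinks of $\beta$ occur along the locus of valuations $v_\lambda$ with $\lambda\in\mathrm{Hom}(\mathbb{G}_m,\mathbb{T})$ --- precisely where the third condition imposes $\beta(v_\lambda)=Fut(\lambda)=0$; between those rays $\beta$ is linear and positivity propagates from the checked divisors. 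When this kink analysis is unavailable (higher dimension, or ambient surfaces where $S$ has kinks of its own, e.g.\ the non-toric $X_{26}$), the reduction to divisors on $X$ fails, which is exactly why the theorem you are quoting is stated in the literature with $\beta(F)>0$ for vertical $\mathbb{T}$-invariant divisors \emph{over} $X$ rather than on $X$. A secondary gap: to convert a vertical divisor with $\beta=0$ into a non-product special test configuration, as your third step requires, you need finite generation (dreaminess) of the associated graded ring, which must be invoked from the literature rather than assumed.
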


The K-stability enable us to construct the compact moduli space of Fano varieties or log Fano pairs. More precisely, It has been proven that:

\begin{thm}[\cite{ADL19}]
    Let $\chi_0$ be the Hilbert polynomial of an anti-canonically polarized Fano manifold. Fix $r\in\mathbb{Q}_{>0}$ and a rational number $c\in(0,\min\left\{1,r^{-1}\right\})$. Consider the following moduli pseudo-functor over reduced base $S$:
    \[
    \mathcal{KM}_{\chi_0,r,c}(S)=\left\{
    (\mathcal{X},\mathcal{D})/S\Bigg|
    \begin{array}{cc} 
    (\mathcal{X},\mathcal{D})/S\text{ is a $\mathbb{Q}$-Gorenstein smoothable log Fano family,}& \\
    \mathcal{D}\thicksim_{S,\mathbb{Q}}, \text{each fiber $(\mathcal{X}_s,c\mathcal{D}_s)$ is K-semistable,}& \\
    \text{and $\chi(\mathcal{X}_s,\mathcal{O}_{\mathcal{X}_s}(-kK_{\chi_s}))=\chi_0(k)$ for $k$ sufficiently divisible.} &
    \end{array}\right\}
    \]
    Then there exists a reduced Artin stack $\mathcal{KM}_{\chi_0,r,c}$ (called a K-moduli stack) of finite type over $\mathbb{C}$ representing the above moduli pseudo-functor. In particular, the $\mathbb{C}$-points of $\mathcal{KM}_{\chi_0,r,c}$ parameterize K-semistable $\mathbb{Q}$-Gorenstein smoothable log Fano pairs $(X,cD)$ with Hilbert polynomial $\chi(X,\mathcal{O}_{X}(-mK_X))=\chi_0(m)$ for sufficiently divisible $m$ and $D\sim_{\mathbb{Q}}-rK_{X}$.

    Moreover, the Artin stack $\mathcal{KM}_{\chi_0,r,c}$ admits a good moduli space $KM_{\chi_0,r,c}$ (called K-moduli space) as a proper reduced scheme of finite type over $\mathbb{C}$, whose closed points parameterize K-polystable log Fano pairs.
\end{thm}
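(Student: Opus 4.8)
The plan is to realize the pseudo-functor as a quotient stack and then produce its good moduli space by the general machinery of Alper together with the valuative criteria for K-stability. The first step is \textbf{boundedness}: because every pair admits a $\bbQ$-Gorenstein smoothing to a fixed smooth log Fano pair and has fixed Hilbert polynomial $\chi_0$, the objects arise as fiberwise limits inside the bounded family deformation-equivalent to that smoothing, and combined with the boundedness of K-semistable log Fano pairs of fixed volume this shows the whole collection is bounded. Boundedness lets me choose a single integer $m\gg0$ for which $-mK_X$ is very ample with vanishing higher cohomology for every member, embedding each $X\hookrightarrow\bP^N$ with $N=\chi_0(m)-1$ and realizing $\mathcal{D}$ as a divisor of fixed degree. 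Hence all objects appear as points of a locally closed subscheme $Z$ of a product $\Hilb(\bP^N)\times\Hilb(\bP^N)$ of Hilbert schemes parametrizing $(X,D)\hookrightarrow\bP^N$, and $\PGL_{N+1}$ acts on $Z$ with orbits exactly the isomorphism classes of pairs.

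Next I would cut out the stable locus. The conditions of being klt, $\bbQ$-Gorenstein smoothable, and $D\sim_{\bbQ}-rK_X$ are open in families, and by the \textbf{openness of K-semistability} in families \cite{BLX19} the K-semistable locus $Z^{ss}\subset Z$ is a $\PGL_{N+1}$-invariant open subscheme. Passing to its reduction $Z^{ss}_{\mathrm{red}}$, the quotient stack $[Z^{ss}_{\mathrm{red}}/\PGL_{N+1}]$ is a reduced Artin stack of finite type over $\bbC$. One then checks that its groupoid of families over a reduced base $S$ coincides with $\mathcal{KM}_{\chi_0,r,c}(S)$: any such family embeds \'etale-locally into $\bP^N$ after an anticanonical twist, producing a map to $Z^{ss}$ that is well defined up to the $\PGL_{N+1}$-action, and conversely every map to the quotient arises this way.

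It then remains to construct the \textbf{good moduli space}. Here I would invoke Alper's existence criterion, which requires that the stack be $\Theta$-reductive and S-complete and that the stabilizers of closed points be reductive. The first two conditions translate precisely into statements about one-parameter degenerations: $\Theta$-reductivity amounts to the \emph{existence} and S-completeness to the \emph{uniqueness} of the K-polystable limit of a one-parameter family of K-semistable pairs, both of which follow from the valuative theory of K-stability (Blum--Xu, Li--Wang--Xu) in the $\bbQ$-Gorenstein smoothable setting. The reductivity of $\Aut(X,cD)$ for a K-polystable pair is the theorem of Alper--Blum--Halpern-Leistner--Xu. Granting these inputs, the stack admits a good moduli space $KM_{\chi_0,r,c}$, an algebraic space of finite type whose closed points are exactly the K-polystable pairs; separatedness of this space follows from uniqueness of the K-polystable degeneration and properness from its existence, while finiteness of type and reducedness are inherited from $Z^{ss}_{\mathrm{red}}$. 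That it is in fact a scheme follows from descending the ample CM line bundle to the good moduli space.

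The \textbf{main obstacle} is the existence of the good moduli space, which rests entirely on the two deep inputs above: the reductivity of automorphism groups of K-polystable pairs and the existence and uniqueness of their K-polystable degenerations. In the $\bbQ$-Gorenstein smoothable regime one may alternatively route around these through the analytic Gromov--Hausdorff compactification, identifying Gromov--Hausdorff limits of K\"ahler--Einstein pairs with K-polystable degenerations (Donaldson--Sun, Li--Wang--Xu, Odaka--Spotti--Sun); the delicate point there is matching the analytic and algebraic structures so that the Hausdorff moduli space acquires the structure of a proper reduced scheme representing the functor.
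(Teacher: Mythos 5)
The paper never proves this statement: it is quoted verbatim from \cite{ADL19} as background, so there is no internal argument to compare against; the relevant comparison is with the proof in \cite{ADL19} itself. Your sketch is a coherent outline of what is \emph{today} the standard algebraic construction (boundedness from smoothability and fixed volume, Hilbert-scheme parametrization, openness of K-semistability as in \cite{BLX19}, quotient stack, existence criteria for good moduli spaces, descent of the CM line bundle), and as a modern proof strategy it is essentially correct. But it is worth noting that it is not the route taken in \cite{ADL19}: in the $\mathbb{Q}$-Gorenstein smoothable setting their construction runs through precisely the analytic argument you relegate to a closing remark, namely Gromov--Hausdorff compactness of K\"ahler--Einstein Fano pairs and the identification of GH limits with K-polystable degenerations (Donaldson--Sun, Li--Wang--Xu, Odaka--Spotti--Sun), since at the time the general stack-theoretic inputs you invoke (the existence criterion of Alper--Halpern-Leistner--Heinloth, reductivity of automorphism groups as in \cite{JHHLX}, and algebraic properness) were not available in the required generality. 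So what you call the ``alternative route'' is the actual proof, and what you present as the main line is an anachronistic---though now legitimate---substitute.

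One genuine imprecision in your main line should be flagged: you misallocate the roles of the existence criteria. $\Theta$-reductivity and S-completeness (together with the resulting reductivity of stabilizers of closed points) yield a \emph{separated} good moduli space; neither of them gives properness. $\Theta$-reductivity concerns extending families over $\Theta=[\mathbb{A}^1/\mathbb{G}_m]$ punctured at its closed point, i.e.\ completing degenerations of test-configuration type, while S-completeness encodes uniqueness of K-polystable limits. Properness of $KM_{\chi_0,r,c}$ is the separate semistable-reduction statement that a K-semistable family over a punctured curve admits, after finite base change, a K-semistable filling---in the smoothable setting this is exactly where the Gromov--Hausdorff compactness enters, and in the general setting it is a much later theorem (Liu--Xu--Zhuang). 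Writing ``$\Theta$-reductivity amounts to the existence of the K-polystable limit of a one-parameter family'' conflates these two different one-parameter statements, and a proof organized around that identification would have a hole precisely at properness, which is the deepest part of the theorem.
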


\begin{rem}
    It has been shown that the $CM$-line bundle $\Lambda_c$ over $KM_{\chi_0,r,c}$ is ample. Hence $KM_{\chi_0,r,c}$ is a projective scheme. Furthermore, if $(\mathcal{X},\mathcal{D})/S$ is family of log surface pairs, then both $\mathcal{KM}_{\chi_0,r,c}$ and $KM_{\chi_0,r,c}$ are normal.
\end{rem}

It is thus natural to ask how moduli spaces depend on the coefficient $c$. In the above setting, they prove the following wall-crossing type result.

\begin{thm}(\cite{ADL19})
    There exist rational numbers
    \[
    0=c_0<c_1<c_2<...<c_k=\min\left\{1,r^{-1}\right\}
    \]
    such that $c$-$k$-(poly/semi)stability conditions do not change for $c\in(c_i,c_{i+1})$. For each $1\le i\le k-1$, we have open immersions
    \[
    \mathcal{KM}_{\chi_0,r,c}\stackrel{\Phi^{-}_{i}}{\hookrightarrow}\mathcal{KM}_{\chi_0,r,c-\epsilon}\stackrel{\Phi^{+}_{i}}{\hookleftarrow}\mathcal{KM}_{\chi_0,r,c+\epsilon}
    \]
    which induce projective morphisms
    \[
    KM_{\chi_0,r,c}\stackrel{\phi^{-}_{i}}{\rightarrow}KM_{\chi_0,r,c-\epsilon}\stackrel{\phi^{+}_{i}}{\leftarrow}KM_{\chi_0,r,c+\epsilon}
    \]
    Moreover, all the above wall crossing morphisms have local VGIT presentations, and the $CM$ $\mathbb{Q}$-line bundles on $KM_{\chi_0,r,c\pm\epsilon}$ are $\phi^{\pm}_{i}$-ample.
\end{thm}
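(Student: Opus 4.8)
The plan is to convert the question of how the K-moduli stack and space vary with $c$ into a finite-dimensional variation-of-GIT (VGIT) problem; the existence of $\mathcal{KM}_{\chi_0,r,c}$ and of its good moduli space $KM_{\chi_0,r,c}$ for each fixed $c$ may be assumed from the preceding theorem, so the real content lies in the $c$-dependence. First I would fix the compact interval $[0,\min\{1,r^{-1}\}]$ and invoke boundedness of $\mathbb{Q}$-Gorenstein smoothable K-semistable log Fano pairs with Hilbert polynomial $\chi_0$ and $D\sim_{\mathbb{Q}}-rK_X$, which holds uniformly in $c$. This produces a single $m\gg0$ for which $-mK_X$ is very ample on every such $X$, embedding all pairs into a fixed $\mathbb{P}^N$ and realizing them as $\mathbb{C}$-points of a locally closed, $G$-invariant subscheme $Z$ of a suitable Hilbert scheme of pairs, with $G=\GL_{N+1}$. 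The good moduli space is then a GIT-type quotient of $Z$, and the task is to track this quotient as $c$ moves.

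Next I would introduce the CM $\mathbb{Q}$-line bundle $\Lambda_c$ on $Z$ and record the two properties that make it the right object. Its $\mathbb{G}_m$-weight at a point, with respect to a one-parameter subgroup $\lambda$ of $G$, equals the generalized Futaki invariant $Fut(\mathcal{X},\mathcal{D};\mathcal{L})$ of the associated test configuration; and, since the boundary term $CH(\mathcal{X},\mathcal{D};\mathcal{L})$ enters $Fut$ linearly in $c$ while $Fut(\mathcal{X},\mathcal{L})$ is independent of $c$, the line bundle splits affinely as $\Lambda_c=\Lambda^{(0)}+c\,\Lambda^{(1)}$. This affine linearity in $c$ is the structural feature that drives the whole wall-crossing.

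The heart of the argument is the identification, on $Z$, of K-(semi/poly)stability of $(X,cD)$ with GIT-(semi/poly)stability for the polarization $\Lambda_c$. The key input is that a K-semistable degeneration of a $\mathbb{Q}$-Gorenstein smoothable pair again lies in $Z$, so that every destabilizing test configuration is induced by a one-parameter subgroup of $G$ acting on $Z$; combined with the weight computation, the Hilbert--Mumford numerical criterion then matches the valuative criterion of K-stability through the equality $Fut=\Lambda_c\text{-weight}$. I expect this identification to be the main obstacle, since it rests on boundedness, the openness of K-semistability, and the separatedness and properness of the K-moduli stack, all of which are needed to guarantee that K-stability is tested entirely inside $Z$ rather than by test configurations escaping the parameter space.

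Finally, with the reductive group $G$ acting on $Z$ and the affine one-parameter family of polarizations $\{\Lambda_c\}$ in hand, I would apply variation of GIT (Thaddeus, Dolgachev--Hu): the GIT-fan is finite, producing finitely many walls $0=c_0<c_1<\cdots<c_k=\min\{1,r^{-1}\}$ on whose open chambers $Z^{ss}(\Lambda_c)$ is constant, which is precisely the non-variation of the stability conditions on each $(c_i,c_{i+1})$. At each interior wall $c_i$ the standard VGIT picture yields open immersions $Z^{ss}(\Lambda_{c_i\pm\epsilon})\hookrightarrow Z^{ss}(\Lambda_{c_i})$, hence the open immersions $\Phi_i^{\pm}$ of stacks and the induced projective morphisms $\phi_i^{\pm}$ on good moduli spaces. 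The \'etale-local structure of these good moduli spaces near a polystable orbit is, by the Luna/Alper--Hall--Rydh slice theorem, the GIT quotient of the deformation space by the (reductive) automorphism group, whose $c$-dependence is again affine; this is exactly the asserted local VGIT presentation. Ampleness of $\Lambda_{c_i\pm\epsilon}$ relative to the contraction morphisms is built into the VGIT construction, and transporting $\Lambda_c$ to the CM $\mathbb{Q}$-line bundle on $KM_{\chi_0,r,c\pm\epsilon}$ then gives the claimed $\phi_i^{\pm}$-ampleness.
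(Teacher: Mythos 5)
The paper offers no proof of this theorem: it is quoted (with some transcription errors in the directions of the arrows) from \cite{ADL19}, so your proposal has to be measured against the argument in that reference. Your outline agrees with it on the periphery --- boundedness producing a uniform $m$ and a locally closed parameter space $Z$ inside a Hilbert scheme, and the CM $\mathbb{Q}$-line bundle $\Lambda_c$ whose weight under a one-parameter subgroup equals the generalized Futaki invariant and which is affine-linear in $c$ --- but the step you yourself call ``the heart of the argument,'' namely the global identification on $Z$ of K-(semi/poly)stability of $(X,cD)$ with GIT-(semi/poly)stability for the polarization $\Lambda_c$, followed by Thaddeus/Dolgachev--Hu VGIT, is a genuine gap, and it is not how the cited proof runs. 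First, $\Lambda_c$ is not known to be ample (or even semiample) on $Z$, and $Z$ is only locally closed in the Hilbert scheme; classical GIT and the finiteness of the GIT fan require an ample linearization on a projective scheme, so ``GIT stability with respect to $\Lambda_c$'' is not well defined in the form your plan needs. Second, for a fixed embedding by $|-mK_X|$ the test configurations induced by one-parameter subgroups of $\GL_{N+1}$ do not exhaust all test configurations, so a Hilbert--Mumford criterion for $\Lambda_c$ at fixed $m$ can at best give a necessary condition for K-semistability; this equivalence does not follow from boundedness, openness, separatedness and properness (all of which \emph{are} used in \cite{ADL19}, but never to produce a global GIT presentation), and the closely related identification of K-stability with asymptotic Chow/Hilbert stability is known to fail. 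The reason the existence theorem quoted just before this one is hard is precisely that no such global GIT construction of K-moduli is available.

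What \cite{ADL19} actually does is local and valuative. Finiteness and rationality of the walls $c_i$ are proved from the fact that, for divisors $E$ over pairs in the bounded family, both $A_{(X,cD)}(E)$ and $S_{(X,cD)}(E)$ are affine-linear in $c$, combined with constructibility and Noetherian induction over $Z$; no GIT fan is invoked. The open immersions come from the interpolation property of K-semistability (a pair semistable throughout an adjacent chamber remains semistable on the wall), and the induced projective morphisms of good moduli spaces come from Alper's theory together with ampleness of the CM line bundle on the moduli spaces $KM_{\chi_0,r,c}$ themselves, not on $Z$. The ``local VGIT presentation'' is exactly that: an \'etale-local statement at a K-polystable point, obtained from a Luna-type slice for the $\PGL_{N+1}$-action on $Z$ (using reductivity of the stabilizer), with a linearization varying affinely in $c$ on the slice. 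In other words, the local VGIT structure is the \emph{conclusion} of the local structure theory at polystable points, not, as in your outline, a corollary of a global VGIT on $Z$; your final paragraph is closest to the actual argument, but it cannot be reached by the global route you propose.
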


Similarly, if one consider the log Fano pair with multiple boundaries, there exists a similarly result as above:

\begin{thm}(\cite{zhou2023shapeksemistabledomainwall})
    Fix two positive integers $d$ and $k$, a positive number $v$, and a finite set $I$ of non-negative rational numbers. There exists a finite chamber decomposition $\overline{\Delta^k}=\bigcup^{r}_{i=1}P_i$, where $P_i$ are rational polytopes and $P_{i}^{\circ}\cap P_{j}^{\circ}=\emptyset$ for $i\neq j$, such that
    \begin{enumerate}
        \item $\mathcal{M}^{K}_{d,k,v,I,\overrightarrow{c}}$ (resp. $M^K_{d,k,v,I,\overrightarrow{c}}$) does not change as $\overrightarrow{c}$ varies in $P^{\circ}_{i}(\mathbb{Q})$;
        \item Let $P_i$ and $P_j$ be two different polytopes in the decomposition that share the same face $F_{i,j}$. Suppose $\overrightarrow{w}_1\in P^{\circ}_{i}(\mathbb{Q})$ and $\overrightarrow{w}_2\in P^{\circ}_{j}(\mathbb{Q})$ satisfy that the segment connecting them intersects $F_{ij}$ at a point $\overrightarrow{w}$, then $\overrightarrow{w}$ is a rational point and for any rational numbers $0<t_1,t_2<1$, $\mathcal{M}^{K}_{d,k,v,I,t_1\overrightarrow{w}+(1-t_1)\overrightarrow{w}_1}$ (resp. $\mathcal{M}^{K}_{d,k,v,I,t_2\overrightarrow{w}+(1-t_2)\overrightarrow{w}_2}$) does not change as $t_l$ varies in $[0,1)\cap\mathbb{Q}$,where $l=1,2$. 
        \item For any $t_1$, $t_2\in[0,1)\cap\mathbb{Q}$, there exist open immersions:
        \[    \mathcal{M}^{K}_{d,k,v,I,t_1\overrightarrow{w}+(1-t_1)\overrightarrow{w}_1}\stackrel{\Phi^{-}_{i}}{\hookrightarrow}\mathcal{M}^{K}_{d,k,v,I,\overrightarrow{w}}\stackrel{\Phi^{-}_{i}}{\hookleftarrow}\mathcal{M}^{K}_{d,k,v,I,t_2\overrightarrow{w}+(1-t_2)\overrightarrow{w}_2}.
        \]
        which induce projective morphisms
        \[
        M^{K}_{d,k,v,I,t_1\overrightarrow{w}+(1-t_1)\overrightarrow{w}_1}\stackrel{\Phi^{-}_{i}}{\rightarrow}M^{K}_{d,k,v,I,\overrightarrow{w}}\stackrel{\Phi^{-}_{i}}{\leftarrow}M^{K}_{d,k,v,I,t_2\overrightarrow{w}+(1-t_2)\overrightarrow{w}_2}.
        \]
        
    \end{enumerate}
    
\end{thm}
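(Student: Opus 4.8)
\emph{Overall strategy.} The plan is to reduce the multi-parameter K-stability problem to a variation-of-GIT (VGIT) problem, exactly as in the single-boundary case of \cite{ADL19}, the decisive structural input being that the generalized Futaki invariant is \emph{affine-linear} in the coefficient vector $\overrightarrow{c}$. Indeed, writing $\mathcal{D}=\sum_{i=1}^k c_i\mathcal{D}_i$, the definition of the Futaki invariant recalled above gives $Fut(\mathcal{X},\mathcal{D};\mathcal{L})=Fut(\mathcal{X},\mathcal{L})+\sum_{i=1}^k c_i\,CH(\mathcal{X},\mathcal{D}_i;\mathcal{L})$, so for a \emph{fixed} test configuration the sign condition $Fut\ge 0$ is a single affine-linear inequality in $\overrightarrow{c}$, i.e. it cuts out a rational half-space of $\overline{\Delta^k}$. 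Consequently, for a fixed pair the locus of $\overrightarrow{c}$ for which it is K-semistable is an intersection of half-spaces, hence a rational polytope; this is the shape of the K-semistable domain of \cite{zhou2023shapeksemistabledomainwall}, and the global chamber decomposition will be obtained by assembling these polytopes over the whole family.

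\emph{Boundedness and finiteness of walls.} First I would show that the pairs $(X,\sum D_i)$ which are K-semistable for \emph{some} $\overrightarrow{c}\in\overline{\Delta^k}$ form a bounded family: the fixed volume $v$, dimension $d$, number of components $k$, and the finite coefficient set $I$ pin down the discrete invariants, and the lower bound on anticanonical volume together with boundedness of K-semistable log Fano pairs embeds them uniformly via $|-mK_X|$ into a fixed Hilbert scheme $Z$ carrying a $\PGL$-action, with the boundary components recorded in an auxiliary product of linear systems. Using boundedness together with the valuative criterion of Fujita--Li--BLX and finiteness of optimal destabilizations, I would then argue that K-(semi)stability across the \emph{entire} family is tested by a finite collection of special test configurations; by the affine-linearity above, each contributes one rational hyperplane in $\overrightarrow{c}$-space. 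Let $\{H_\alpha\}$ denote this finite arrangement: its closed chambers in $\overline{\Delta^k}$ are the rational polytopes $P_i$, with disjoint interiors. On an interior $P_i^\circ$ the subset of these inequalities satisfied strictly versus with equality is constant, so the $\overrightarrow{c}$-K-(semi/poly)stable locus in $Z$ is constant, giving claim (1): both $\mathcal{M}^K_{d,k,v,I,\overrightarrow{c}}$ and its good moduli space $M^K_{d,k,v,I,\overrightarrow{c}}$ (projective by the CM-ampleness of \cite{ADL19}) are independent of $\overrightarrow{c}\in P_i^\circ(\mathbb{Q})$.

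\emph{Wall-crossing morphisms.} On a shared face $F_{ij}=P_i\cap P_j$ I would identify the wall value $\overrightarrow{w}$: since no hyperplane of $\{H_\alpha\}$ separates an interior point $\overrightarrow{w}_l$ from $\overrightarrow{w}$ inside the closed chamber, the segment from $\overrightarrow{w}_l$ to $\overrightarrow{w}$ crosses no further wall, which is precisely the stability-constancy of claim (2). For claim (3), the affine-linear variation of the linearization places us in the VGIT setting of Dolgachev--Hu and Thaddeus: the $\overrightarrow{w}$-semistable locus in $Z$ contains each nearby $\overrightarrow{w}_l$-semistable locus, yielding the $\PGL$-equivariant open immersions and hence the open immersions of stacks $\Phi^\pm$; descending to good moduli spaces and using that the CM $\mathbb{Q}$-line bundle varies affine-linearly and is relatively ample over the wall produces the projective morphisms $\phi^\pm$.

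\emph{Main obstacle.} The two genuinely hard points are the uniform boundedness over the \emph{entire} polytope $\overline{\Delta^k}$ — including degenerate coefficient directions where klt-ness or the volume bound could fail, which is exactly what forces the hypotheses on $d,k,v,I$ — and the reduction of the infinitely many valuative conditions to a \emph{finite} hyperplane arrangement uniformly across the family. The latter carries the deepest input: one must ensure that only finitely many divisorial valuations (equivalently, special test configurations) can ever become destabilizing as $\overrightarrow{c}$ ranges over $\overline{\Delta^k}$, which rests on the properness, $\Theta$-reductivity, and S-completeness of the K-moduli stacks underlying the existence theory behind \cite{ADL19}. Establishing that the wall-crossing produces genuine open immersions of stacks and projective (not merely proper) morphisms of good moduli spaces is the final delicate step, and relies on the local VGIT presentation together with CM-ampleness.
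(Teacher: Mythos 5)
The paper does not actually prove this statement: it is quoted as background, with a citation to \cite{zhou2023shapeksemistabledomainwall}, exactly like the other preliminary theorems in Section 2, so there is no internal proof to compare yours against. Measured instead against the known argument in that reference (and its one-boundary prototype in \cite{ADL19}), your outline has the correct skeleton: affine-linearity of the generalized Futaki invariant (equivalently of $\beta$ for a fixed valuation) in the coefficient vector, so that each potential destabilizer cuts a rational half-space; boundedness of the relevant log Fano pairs to get a uniform Hilbert-scheme parametrization; reduction to finitely many destabilizing special test configurations to turn the half-space description into a finite rational hyperplane arrangement (this is indeed where the deepest input sits, via finite generation/optimal destabilization in the sense of Liu--Xu--Zhuang and openness of K-semistability from \cite{BLX19}); and CM-line-bundle positivity for projectivity of the wall-crossing morphisms. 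Your identification of the two genuinely hard points (uniform boundedness over all of $\overline{\Delta^k}$, and finiteness of the wall arrangement) matches the structure of the actual proof.

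The one step I would push back on is your framing of claim (3) as an application of Dolgachev--Hu/Thaddeus VGIT to the $\PGL$-action on the Hilbert scheme: the $\overrightarrow{c}$-K-semistable locus there is \emph{not} the semistable locus of any linearization, so global VGIT does not literally apply. In the actual arguments the open immersions $\Phi^{\pm}$ come from interpolation --- $Fut$ is affine in $\overrightarrow{c}$, so K-semistability on the half-open segment $[\overrightarrow{w}_l,\overrightarrow{w})$ forces K-semistability at the wall value $\overrightarrow{w}$ by continuity --- combined with openness of the K-semistable locus in families; the induced maps of good moduli spaces then exist by the abstract existence theory (S-completeness, $\Theta$-reductivity) and are projective by relative CM-ampleness. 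The VGIT picture is only an \'etale-local presentation near closed (polystable) points, which you do acknowledge in your final sentence; but as written, the middle of your argument leans on a global GIT structure that is not available, and a referee for the cited result would ask you to replace it with the interpolation-plus-openness mechanism. With that substitution, your sketch is consistent with the proof in \cite{zhou2023shapeksemistabledomainwall}.
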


\section{K-semistable degeneration}
In this section, we will describe the K-semistable degeneration of degree 5 pair. Firstly, let's recall the definition and basic properties of $T$-singularities.

\begin{defn}
    Let $(p\in X)$ be a quotient singularity of dimension 2. We say $(p\in X)$ is a T-singularity if it admits a $\mathbb{Q}$-Gorenstein smoothing. That is, there exists a deformation of $(p\in X)$ over the germ of a curve such that the total space is $\mathbb{Q}$-Gorenstein and the general fibre is smooth.
\end{defn}

For $n$, $a_1,...,a_n\in\mathbb{N}$, let $\frac{1}{n}(a_1,...a_n)$ denote the cyclic quotient singularity $(0\in\mathbb{A}^n/\mu_n)$ given by 
\[
\mu_n\ni\zeta:(x_1,...,x_n)\mapsto (\zeta^{a_1},...,\zeta^{a_n}).
\]
Here $\mu_n$ denotes the group of $n$-th roots of unity.
\begin{thm}(\cite{KSB})
     A T-singularity is either a Du Val singularity or a cyclic quotient singularity of the form $\frac{1}{dn^2}(1,dna-1)$ for some $d$, $n$, $a\in\mathbb{N}$ with $(a,b)=1$.
\end{thm}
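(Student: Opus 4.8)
The plan is to prove both directions of the classification, using the \emph{index-one (canonical) cover} as the unifying tool: it converts the $\mathbb{Q}$-Gorenstein hypothesis into an honest Gorenstein statement about a Du Val singularity carrying a cyclic group action. Throughout I write a $2$-dimensional quotient singularity as $X=\mathbb{A}^2/G$ with $G\subset\GL_2$ a small finite subgroup (no quasi-reflections).

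\textbf{Existence.} For a Du Val singularity $K$ is Cartier, the germ is a hypersurface in $\mathbb{A}^3$, and any smoothing is automatically $\mathbb{Q}$-Gorenstein (for instance $A_k\colon \{uv=w^{k+1}+t\}$). For the cyclic singularity $\frac{1}{dn^2}(1,dna-1)$, let $\epsilon$ be a primitive $dn^2$-th root of unity. The subgroup $\mu_{dn}\subset\mu_{dn^2}$ acts on $\mathbb{A}^2$ by $\frac{1}{dn}(1,-1)$, since $\epsilon^{n\cdot dna}=\epsilon^{dn^2a}=1$ forces the second weight to be $-n$; hence the partial quotient is the $A_{dn-1}$ singularity $\{uv=w^{dn}\}$ and $\frac{1}{dn^2}(1,dna-1)=\{uv=w^{dn}\}/\mu_n$ for the residual group $\mu_n=\mu_{dn^2}/\mu_{dn}$. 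The $\mu_n$-action extends to $\{uv=w^{dn}+t\}$ with a suitable weight on $t$, is free on the general fibre, and descends to a $\mathbb{Q}$-Gorenstein smoothing of the quotient. This produces smoothings of all the listed singularities.

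\textbf{Necessity.} Here the image of $\det\colon G\to\mathbb{C}^\ast$ is $\mu_r$, where $r$ is the Gorenstein index, and the canonical cover is $Y=\mathbb{A}^2/(G\cap\SL_2)$, a Gorenstein quotient surface singularity, hence Du Val, on which $\mu_r$ acts with $X=Y/\mu_r$. A $\mathbb{Q}$-Gorenstein deformation of $X$ pulls back along the relative index-one cover to a $\mu_r$-equivariant deformation of $Y$; if the general fibre $X_t$ is smooth then its index-one cover $Y_t\to X_t$ is \'etale, so $\mu_r$ acts \emph{freely} on the Milnor fibre $M=Y_t$. The crux of the whole theorem is to show that this forces $Y$ to be of type $A$, i.e.\ that the binary dihedral and binary polyhedral groups (types $D$, $E$) are excluded. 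I would extract this from the intersection lattice of $M$ — which is the negative-definite ADE root lattice — together with the requirement that a \emph{free} $\mu_r$-quotient of $M$ glue to a surface singularity; equivalently, via equivariant deformation theory one checks that the smoothing direction of a $D$- or $E$-singularity is not $\mu_r$-semi-invariant of the weight dictated by the trivialization of $K$. This is the main obstacle and the genuinely hard input.

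\textbf{Identification.} Once $Y$ is forced to be $A_k$, so that $G\cap\SL_2=\mu_{k+1}$ is cyclic, a direct analysis of the quotient $\{uv=w^{k+1}\}/\mu_r$ shows $G$ is abelian and $X$ is a cyclic quotient singularity. Diagonalizing the residual cyclic action and matching weights, one finds $X\cong\frac{1}{dn^2}(1,dna-1)$: the Euler-characteristic relation $r\mid(1+\mu)=k+1$ together with freeness of $G$ on $\mathbb{A}^2\setminus\{0\}$ produces the index $r=n$, the relation $k+1=dn$, and the coprimality $\gcd(a,n)=1$ (the condition written $(a,b)=1$ in the statement). This reduces every case to the explicit models constructed in the existence direction, completing both implications.
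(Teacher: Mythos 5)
The paper gives no argument for this statement at all --- it is quoted verbatim from \cite{KSB} --- so your proposal has to stand on its own as a proof. Your existence half is essentially correct and is the standard construction: the presentation $\frac{1}{dn^2}(1,dna-1)=\{uv=w^{dn}\}/\mu_n$ via the intermediate $A_{dn-1}$-cover, and the descent of the smoothing $\{uv=w^{dn}+t\}$, are exactly right. One caveat: the weight of $\mu_n$ on $t$ is not merely ``suitable,'' it must be \emph{trivial}, and it is, precisely because $uv-w^{dn}$ is honestly $\mu_n$-invariant (the residual action is $\frac{1}{n}(1,-1,a)$ on $(u,v,w)$). If the equation were only semi-invariant with nontrivial character, the quotient family over the quotient of the base would have non-reduced central fibre and would not be a smoothing of the quotient at all. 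This is not a cosmetic point: it is the exact mechanism that drives the necessity direction, and it is what your sketch is missing.

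The genuine gap is in the necessity direction, at the step you yourself call ``the main obstacle'': excluding canonical covers of type $D$ and $E$. Neither of your two suggested routes is carried out, and the lattice-theoretic one cannot work as stated --- the Milnor fibre of $E_8$ has Euler characteristic $9$, so divisibility and root-lattice data alone permit a free $\mu_3$-action; something finer is needed. The argument that does work is available inside your own setup. Write $Y=\{f=0\}\subset\mathbb{A}^3$ and linearize the $\mu_r$-action. Since the base of a deformation of $X$ carries the trivial action, an equivariant one-parameter deformation is $\{f+tg=0\}$ with $g$ semi-invariant of the same character $\chi$ as $f$. The origin is fixed by all of $\mu_r$, so freeness on the Milnor fibre forces the fibre to miss the origin, i.e.\ $g(0)\neq0$; evaluating semi-invariance of $g$ at $0$ then forces $\chi$ to be trivial, so $f$ is \emph{invariant}, not just semi-invariant. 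Applying the same freeness requirement to the fixed subspace $V_\gamma$ of each $1\neq\gamma\in\mu_r$, and using freeness of the action on $Y\setminus\{0\}$, one gets $V_\gamma=\{0\}$: the linear action is fixed-point-free, so all weights $(w_x,w_y,w_z)$ of a generator are prime to $r$. Now invariance of the $D_k$ equation $x^2+y^2z+z^{k-1}$ gives $2w_x\equiv0\pmod r$ with $\gcd(w_x,r)=1$, hence $r\mid2$, and then $2w_y+w_z\equiv0\pmod 2$ forces $w_z$ even, a contradiction; for $E_6,E_7,E_8$ the monomials $x^2$ and $y^3$ give $r\mid\gcd(2,3)=1$. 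Only type $A$ survives, where $w_u+w_v\equiv(k+1)w_w\equiv0$ gives $r\mid k+1$, and normalizing the generator yields the action $\frac{1}{n}(1,-1,a)$, $\gcd(a,n)=1$, on $\{uv=w^{dn}\}$, i.e.\ exactly $\frac{1}{dn^2}(1,dna-1)$ --- which also supplies your ``Identification'' step, currently only asserted. Without these computations your text is a correct road map of the Koll\'ar--Shepherd-Barron proof, but not a proof.
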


\begin{notation}
    For $p\in X$ a T-singularity, let $\mu_p$ denote the Milnor number of a $\mathbb{Q}$-Gorenstein smoothing of $(p\in X)$.
\end{notation}

If $(p\in X)$ is Du Val singularity of type $A_n$, $D_n$ or $E_n$ then $\mu_p=n$. If $(p\in X)$ is of type $\dfrac{1}{dn^2}(1,dna-1)$ then $\mu_p=d-1$.

\begin{thm}(\cite{hacking2005degenerationsdelpezzosurfaces})
    Let $X$ be a rational surface with $T$-singularities. Then 
    \[
    K_X^2+\rho(X)+\sum_{p}\mu_p=10.
    \]
    where the sum is over the singular points $p\in X$.
\end{thm}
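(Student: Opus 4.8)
The plan is to degenerate the problem to the smooth case by passing to a $\mathbb{Q}$-Gorenstein smoothing of $X$ and comparing numerical invariants of the central and general fibres. Since each $T$-singularity admits a $\mathbb{Q}$-Gorenstein smoothing by definition, and in the situation of interest $X$ comes equipped with a global $\mathbb{Q}$-Gorenstein smoothing $\pi\colon\mathcal{X}\to\Delta$ over a disc (the local deformations glue because the relevant obstruction space vanishes for these rational surfaces), I may assume such a family with smooth general fibre $X_t$. As $X_t$ is a smooth rational surface, Noether's formula together with $\chi(\mathcal{O}_{X_t})=1$ and $b_1(X_t)=0$ gives $K_{X_t}^2+\rho(X_t)=10$: indeed $e(X_t)=2+b_2(X_t)=2+\rho(X_t)$ and $12\,\chi(\mathcal{O}_{X_t})=K_{X_t}^2+e(X_t)$, where $e$ denotes the topological Euler characteristic. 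So it suffices to transport each of the two quantities $K^2$ and $\rho$ from $X_t$ back to $X$, accounting for the singular points by the Milnor numbers $\mu_p$.

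Three ingredients drive the comparison. First, $K^2$ is invariant under $\mathbb{Q}$-Gorenstein deformation, so $K_X^2=K_{X_t}^2$; this is because $K_{\mathcal{X}/\Delta}$ is $\mathbb{Q}$-Cartier and the self-intersection is constant in the flat family. Second, because quotient singularities are rational, $X$ is a $\mathbb{Q}$-homology manifold satisfying rational Poincaré duality; combined with the rationality of $X$ this gives $b_1(X)=b_3(X)=0$ and $H^2(X,\mathbb{Q})=\NS(X)_{\mathbb{Q}}$, whence $e(X)=2+\rho(X)$ exactly as for the smooth fibre. Third, the Euler characteristics of the two fibres differ only by the local surgery at the singular points: replacing a contractible cone neighbourhood of each $p$ by the Milnor fibre $F_p$ of the smoothing and applying inclusion–exclusion over the common links $L_p$ yields
\[
e(X_t)-e(X)=\sum_{p}\bigl(e(F_p)-1\bigr).
\]
The final local input is that for the $\mathbb{Q}$-Gorenstein smoothing of a $T$-singularity the Milnor fibre satisfies $b_1(F_p)=0$ and $b_2(F_p)=\mu_p$, so $e(F_p)-1=\mu_p$; this is the content of the definition of $\mu_p$ and matches $\mu_p=d-1$ for a singularity of type $\frac{1}{dn^2}(1,dna-1)$, with the Wahl case $d=1$ giving a rational homology ball.

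Assembling these facts, $e(X_t)-e(X)=\sum_p\mu_p$ becomes $\rho(X_t)-\rho(X)=\sum_p\mu_p$, and substituting $\rho(X_t)=10-K_{X_t}^2=10-K_X^2$ yields $K_X^2+\rho(X)+\sum_p\mu_p=10$, as claimed. The step I expect to be the main obstacle is the Euler-characteristic comparison together with its local input: one must know that the smoothing is locally modelled by a Milnor fibration over each link and that the reduced Betti numbers of $F_p$ are as stated, which requires the detailed structure of $\mathbb{Q}$-Gorenstein smoothings of cyclic quotient $T$-singularities rather than merely their existence. A secondary subtlety is justifying $e(X)=2+\rho(X)$ in the singular case, for which the rationality of quotient singularities and the fact that every class in $H^2(X,\mathbb{Q})$ is algebraic on a rational surface are both essential.
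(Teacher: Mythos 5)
The paper itself contains no proof of this statement --- it is quoted directly from \cite{hacking2005degenerationsdelpezzosurfaces} --- so the comparison is with the argument in that reference, which is indeed the smoothing computation you outline: constancy of $K^2$ and $\chi(\mathcal{O})$ in a $\mathbb{Q}$-Gorenstein family, the jump of the Euler number by $\sum_p\mu_p$ via the Milnor fibres (whose reduced Betti numbers are as you state: $b_1(F_p)=0$, $b_2(F_p)=\mu_p$), and $b_2(X)=\rho(X)$ for a rational surface with quotient singularities. Your skeleton is the right one, but two steps are genuinely unjustified as written. The first is your claim that the general fibre $X_t$ is a smooth \emph{rational} surface, from which you quote $K_{X_t}^2+\rho(X_t)=10$. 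Rationality of the central fibre does not pass to the general fibre of a $\mathbb{Q}$-Gorenstein smoothing: the Lee--Park construction produces exactly such families whose central fibre is a rational surface with $T$-singularities and whose general fibre is a simply connected surface of general type with $p_g=0$. The identity you need survives, but for a different reason: upper semicontinuity gives $q(X_t)\le q(X)=0$ and $p_g(X_t)\le p_g(X)=0$, hence $\chi(\mathcal{O}_{X_t})=1$, $b_1(X_t)=0$, and $b_2(X_t)=\rho(X_t)$ by Lefschetz $(1,1)$ with $p_g=0$; Noether's formula then yields $K_{X_t}^2+b_2(X_t)=10$ with no rationality claim. You should argue this way and delete the false assertion.

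The second gap concerns existence of the \emph{global} smoothing. The hypothesis of the theorem gives only local $\mathbb{Q}$-Gorenstein smoothability (that is the definition of a $T$-singularity), and your local-to-global step rests on the claim that ``the relevant obstruction space vanishes for these rational surfaces,'' which is not a fact in this generality. The vanishing $H^2(X,T_X)=0$ is a theorem for log del Pezzo surfaces --- essentially what Hacking--Prokhorov prove, by dualizing to reflexive differentials twisted by $\omega_X$ and using positivity of $-K_X$ --- but it is not automatic for an arbitrary rational surface with quotient singularities. In this paper the result is only ever applied to surfaces that are $\mathbb{Q}$-Gorenstein smoothable by hypothesis (they lie in the K-moduli of smoothable pairs), so your argument covers the intended use; but to prove the statement as phrased you should either add the smoothability hypothesis or avoid deformations altogether: apply Noether's formula on the minimal resolution $\pi\colon\tilde{X}\to X$, write $K_{\tilde{X}}=\pi^*K_X+\sum_ia_iE_i$, use $e(X)=2+\rho(X)$ and $e(\tilde{X})=e(X)+\sum_p r_p$, and verify the purely local identity $\mu_p=r_p+\bigl(\sum_ia_iE_i\bigr)^2$ for each $T$-singularity, where $r_p$ is the number of exceptional curves over $p$ and $a_i$ are the discrepancies. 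This reduces the theorem to a continued-fraction computation for the singularities $\frac{1}{dn^2}(1,dna-1)$ and requires no global input at all.
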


Then we give a brief review of normalized volume of valuations, which enable us to estimate the Gorenstein index around singularities.

\begin{defn}
    Let $(X,D)$ be a klt pair of dimension $n$. Let $x\in X$ be a closed point. A valuation $v$ on $X$ centered at $x$ is a valuation of $\mathbb{C}(X)$ such that $v\ge0$ on $\mathcal{O}_{X,x}$ and $v>0$ on $m_{x}$. The set of such valuations is denoted by $\val_{X,x}$. The volume is a function $\vol_{X,x}: Val_{X,x}\rightarrow\mathbb{R}_{\ge0}$ defined as
    \[
    \vol_{X,x}(v):=\lim_{k\rightarrow\infty}\frac{\dim_{\mathbb{C}}\mathcal{O}_{X,x}/\left\{
    f\in\mathcal{O}_{X,x}|v(f)\ge k\right\}
    }{k^n/n!}.
    \]
    The log discrepancy is a function $A_{(X,D)}:\val_{X,x}\rightarrow \mathbb{R}_{\ge0}\cup\left\{+\infty\right\}$.
    If $v=a\cdot\ord_{E}$ where $a\in\mathbb{R}_{>0}$ and $E$ is a prime divisor over $X$ centered at $x$, then 
    \[
    A_{(X,D)}(v)=a(1+\ord_{E}(K_{Y/X})),
    \]
    where $Y$ is a birational model of $X$ containing $E$ as a divisor.

    The normalized volume is a function $\widehat{\vol}_{(X,D),x}:\val_{X,x}\rightarrow\mathbb{R}_{>0}\cup\left\{+\infty\right\}$ defined as 
    \[
    \widehat{\vol}_{(X,D),x}(v):=
    \left\{
    \begin{array}{cc}
        A_{(X,D)}(v)^n\cdot\vol_{X,x}(v), & if\quad A_{(X,D)}(v)<+\infty  \\ 
         +\infty & if\quad A_{(X,D)}(v)=+\infty 
    \end{array}
    \right.
    \]
    The local volume of a klt singularity $x\in(X,D)$ is defined as 
    \[
    \widehat{\vol}(x,X):=\min_{v\in\val_{X,x}}\widehat{\vol}_{(X,D),x}(v).
    \]
\end{defn}

\begin{thm}\cite{LL19}
    Let $(X,D)$ be a K-semistable log Fano pair of dimension n. Then for any closed point $x\in X$, we have 
    \[
    (-K_X-D)^n\le(1+\frac{1}{n})^n\widehat{\vol}(x,X,D).
    \]
\end{thm}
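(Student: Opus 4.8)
\section*{Proof proposal}

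The plan is to reduce the asserted global inequality to a valuation-by-valuation estimate, and then combine that estimate with the valuative criterion of K-semistability recalled above. Write $L = -K_X - D$ and $V = (-K_X-D)^n = \vol(L) > 0$, the positivity coming from $(X,D)$ being log Fano. Fix a closed point $x \in X$ and a valuation $v \in \val_{X,x}$ with $A_{(X,D)}(v) < +\infty$ (if $A_{(X,D)}(v) = +\infty$ then $\widehat{\vol}_{(X,D),x}(v) = +\infty$ and there is nothing to prove). Since $\widehat{\vol}(x,X,D)$ is the infimum of $\widehat{\vol}_{(X,D),x}(v) = A_{(X,D)}(v)^n\,\vol_{X,x}(v)$ over all such $v$, it suffices to show, for each individual $v$, that
\[
A_{(X,D)}(v)^n\,\vol_{X,x}(v) \ \ge\ \Bigl(\tfrac{n}{n+1}\Bigr)^n V,
\]
and then pass to the infimum over $v$.

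The heart of the argument is a purely geometric comparison between the global volume of $L$ and the local volume of $v$, which does not use K-stability at all. For $t \ge 0$ put $\vol(L;v\ge t) := \lim_{m}\dim\{s\in H^0(X,mL): v(s)\ge mt\}\big/(m^n/n!)$. Trivializing $L$ near $x$ and reducing germs of sections modulo the valuation ideal $\mathfrak{a}_{\lceil mt\rceil} = \{f\in\mathcal O_{X,x}: v(f)\ge mt\}$ produces an evaluation map $H^0(X,mL)\to\mathcal O_{X,x}/\mathfrak a_{\lceil mt\rceil}$ whose kernel is exactly $\{s:v(s)\ge mt\}$; comparing asymptotic dimensions via rank--nullity, together with $\dim_{\mathbb C}\mathcal O_{X,x}/\mathfrak a_{\lceil mt\rceil}\sim\vol_{X,x}(v)\,(mt)^n/n!$, yields
\[
\vol(L;v\ge t)\ \ge\ V-\vol_{X,x}(v)\,t^n.
\]
Since $S_{(X,D)}(v) = \frac1V\int_0^{\infty}\vol(L;v\ge t)\,dt$, I would then integrate the lower bound $\max\{0,\,V-\vol_{X,x}(v)\,t^n\}$ from $0$ to its vanishing point $\bigl(V/\vol_{X,x}(v)\bigr)^{1/n}$; the elementary computation gives the clean estimate
\[
S_{(X,D)}(v)\ \ge\ \frac{n}{n+1}\Bigl(\frac{V}{\vol_{X,x}(v)}\Bigr)^{1/n}.
\]

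Finally I would invoke K-semistability through the valuative criterion: $\beta_{(X,D)}(v)\ge 0$, that is $A_{(X,D)}(v)\ge S_{(X,D)}(v)$, for every valuation centered at $x$ (extending the divisorial statement of the theorem above to arbitrary valuations of finite log discrepancy via the $S$-invariant/filtration formalism). Chaining this with the geometric inequality gives $A_{(X,D)}(v)\ge\frac{n}{n+1}\bigl(V/\vol_{X,x}(v)\bigr)^{1/n}$, which rearranges to $A_{(X,D)}(v)^n\,\vol_{X,x}(v)\ge\bigl(\tfrac{n}{n+1}\bigr)^nV$; taking the infimum over $v$ yields $\widehat{\vol}(x,X,D)\ge\bigl(\tfrac{n}{n+1}\bigr)^nV$, equivalently $V\le\bigl(1+\tfrac1n\bigr)^n\widehat{\vol}(x,X,D)$, as claimed. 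The main obstacle is the geometric volume comparison $\vol(L;v\ge t)\ge V-\vol_{X,x}(v)\,t^n$: making the asymptotic dimension counts rigorous requires the existence of the limits defining $\vol(L;v\ge t)$ and $\vol_{X,x}(v)$ and control of the graded linear series $\bigoplus_m\{s:v(s)\ge mt\}$, which I would obtain from Okounkov-body / graded-linear-series techniques (Lazarsfeld--Mustaţă). A secondary point needing care is the passage from divisorial valuations to arbitrary $v\in\val_{X,x}$, where one uses that the local volume is an infimum over quasi-monomial valuations approximable by divisorial ones, so that the valuative criterion $A\ge S$ applies in the required generality.
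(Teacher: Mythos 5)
The paper itself contains no proof of this statement: it is quoted, with citation, from \cite{LL19}, so there is no internal argument to compare yours against; the comparison below is with the proof in that reference.

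Your proposal is correct, and it is in substance the known proof of Liu's inequality. The chain you set up --- (i) the volume comparison $\vol(L;v\ge t)\ \ge\ V-\vol_{X,x}(v)\,t^n$ via rank--nullity for the evaluation map $H^0(X,mL)\to\mathcal{O}_{X,x}/\mathfrak{a}_{\lceil mt\rceil}$, (ii) integration of the positive part of this bound to get $S_{(X,D)}(v)\ge\frac{n}{n+1}\bigl(V/\vol_{X,x}(v)\bigr)^{1/n}$ (your computation $\int_0^T(V-Wt^n)\,dt=\frac{n}{n+1}VT$ with $T=(V/W)^{1/n}$ checks out), and (iii) the valuative criterion $A_{(X,D)}(v)\ge S_{(X,D)}(v)$ for K-semistable pairs --- rearranges to $A_{(X,D)}(v)^n\vol_{X,x}(v)\ge\bigl(\tfrac{n}{n+1}\bigr)^nV$ for every valuation centered at $x$ with finite log discrepancy, and taking the infimum gives exactly the stated bound. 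The two caveats you flag are indeed the only nontrivial inputs, and both are settled in the literature: existence of the limits defining $\vol(L;v\ge t)$ and $\vol_{X,x}(v)$ (graded-linear-series/Okounkov-body methods, and Ein--Lazarsfeld--Smith/Cutkosky/Mustaţă respectively), and the validity of $A\ge S$ beyond divisorial valuations (Blum--Jonsson); alternatively, one can stay entirely with divisorial valuations and use that $\widehat{\vol}(x,X,D)$ equals the infimum of $A^n\vol$ over divisorial valuations centered at $x$, which follows from the characterization of the normalized volume via (graded sequences of) ideals cosupported at $x$. The only genuine difference from \cite{LL19} is presentational: the original argument is phrased through Ding-semistability and Fujita's inequality for ideals cosupported at $x$, whereas you invoke the Fujita--Li valuative criterion directly; the geometric core, the estimate in (i), is the same, so this is a streamlined modern writing of the same proof rather than a different route.
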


Now we can prove the following them about the K-semistable degeneration of log Fano pair:

\begin{lem}\label{index}
    Let $(X,c_1D_1+...+c_mD_m)$ be K-semistable log Fano pair that admits a $\mathbb{Q}$-Gorenstein smoothing to $(\mathbb{P}^2,c_1C_1+...+c_mC_m)$ with $\sum^{m}_{i=1} c_id_i\in(0,3)$. Let $x\in X$ be any singular point with Gorenstein index $\ind(x,X)$, then
    \[
    ind(x,K_X)\le
    \left\{
    \begin{array}{cc}
        \min\left\{\lfloor\dfrac{3}{3-\sum^{m}_{i=1} c_id_i}\rfloor,\quad \max\left\{d_i\right\}_{i=1}^{m}\right\} &  if\quad3\nmid d_i\quad for\quad all\quad i,\\
        \min\left\{\lfloor\dfrac{3}{3-\sum^{m}_{i=1} c_id_i}\rfloor,\quad \max\left\{\dfrac{2d_i}{3}\right\}_{i=1}^{m}\right\} & if\quad3\mid d_i\quad for\quad some\quad i.
    \end{array}
    \right.
    \]
\end{lem}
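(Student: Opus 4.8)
The plan is to combine the volume comparison of \cite{LL19} with the local classification of T-singularities, and then to refine the resulting estimate by passing to the local class group of $(x\in X)$. Throughout I may assume $x$ is not Du Val, since a Du Val point is Gorenstein and the asserted bound is automatic. Because $(X,\sum c_iD_i)$ admits a $\mathbb{Q}$-Gorenstein smoothing to $(\mathbb{P}^2,\sum c_iC_i)$, the surface $X$ is $\mathbb{Q}$-Gorenstein smoothable, so by \cite{KSB} its singularities are T-singularities; the point $x$ is then a cyclic quotient singularity of type $\tfrac{1}{dn^2}(1,dna-1)$, for which $\ind(x,K_X)=n$ and the acting group has order $|G|=dn^2$.

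First I would establish the term $\lfloor 3/(3-\sum_i c_id_i)\rfloor$. Since the volume of $-K-D$ is invariant in $\mathbb{Q}$-Gorenstein families, one has $(-K_X-\sum_i c_iD_i)^2=(-K_{\mathbb{P}^2}-\sum_i c_iC_i)^2=(3-\sum_i c_id_i)^2$. Applying the inequality of \cite{LL19} with $n=2$, then dropping the boundary (which only decreases the normalized volume) and using that a quotient singularity satisfies $\widehat{\vol}(x,X)=4/|G|=4/(dn^2)$, I obtain
\[
\left(3-\sum_{i} c_id_i\right)^2\;\le\;\tfrac{9}{4}\,\widehat{\vol}(x,X)\;=\;\frac{9}{dn^2}\;\le\;\frac{9}{n^2}.
\]
Rearranging gives $n(3-\sum_i c_id_i)\le 3$, and since $n$ is an integer this is exactly the first term in the minimum.

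For the second term I would work in the class group of the germ $(x\in X)$. The key structural input is that, under the degeneration, the polarization is the flat limit $H$ of $\mathcal{O}_{\mathbb{P}^2}(1)$, so that $-K_X\sim_{\mathbb{Q}}3H$ and each $D_i\sim_{\mathbb{Q}}d_iH$, and in fact these should hold as genuine linear equivalences once one checks that the class group of $X$ is generated by the limit polarization. The local class group at $x$ is cyclic; writing $h$ for the order of the class of $H$, the relation $-K_X\sim 3H$ gives $\ind(x,K_X)=n=h/\gcd(h,3)$. Since each $D_i$ is an honest effective divisor with local class $d_i[H]$, Cartierness of $D_i$ at $x$ forces $h\mid d_i$. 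Tracking the interaction of this divisibility with the factor $\gcd(h,3)$ then splits into two cases: if $3\nmid d_i$ one necessarily has $3\nmid h$, hence $n=h\le d_i$; if $3\mid d_i$ the divisibility instead yields $h\le 2d_i/3$. Taking the maximum over $i$ produces the second term.

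The main obstacle I anticipate is precisely this second bound. One must justify that $-K_X\sim 3H$ and $D_i\sim d_iH$ hold \emph{integrally} (so that the class computations are genuine linear equivalences and not merely $\mathbb{Q}$-linear), and that the relevant boundary divisor is Cartier at $x$, so that the divisibility $h\mid d_i$ is actually available; note that in examples such as $\mathbb{P}(1,4,25)$ the degree-$5$ curve that controls the index need not pass through the singular point, which is what makes the local-Cartier input effective. Pinning down the exact numerical dichotomy—in particular that the case $3\mid d_i$ produces the stated constant $2d_i/3$ rather than a sharper one—is the delicate bookkeeping step, whereas the first bound is a fairly direct application of the quoted volume inequality together with $\widehat{\vol}(x,X)=4/|G|$.
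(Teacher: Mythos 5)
Your first bound is correct, and it is the standard argument (the paper itself gives no details here, deferring entirely to \cite{ADL19}): constancy of $(-K_X-\sum_i c_iD_i)^2$ in a $\mathbb{Q}$-Gorenstein family, the inequality of \cite{LL19}, comparison of normalized volumes with and without boundary, and $\widehat{\vol}(x,X)=4/(dn^2)$ for a quotient singularity give $n(3-\sum_i c_id_i)\le 3$. Two small points: removing the boundary \emph{increases} the normalized volume (your parenthetical states the monotonicity backwards, though the displayed chain of inequalities is the right one), and since $K_X^2+\rho(X)+\sum_p\mu_p=10$ forces $\mu_p=0$ for degenerations of $\mathbb{P}^2$, one in fact has $d=1$, i.e.\ all singularities are of type $\frac{1}{n^2}(1,na-1)$.

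The second bound is where there is a genuine gap. Your argument rests on the claim that ``Cartierness of $D_i$ at $x$ forces $h\mid d_i$,'' i.e.\ on the premise that each boundary component is Cartier at the singular point (equivalently, has trivial local class there). This premise is unjustified and is false precisely in the situations the lemma exists to cover, as the paper's own moduli spaces show: the pair $(\mathbb{P}(1,1,4),aQ^{(1)}+bL^{(1)})$ with $Q^{(1)}=\{z^2xy=0\}$ and $L^{(1)}=\{xy=0\}$ is K-polystable on the wall $7a-b=3$ (Theorem \ref{4.8}) and is a $\mathbb{Q}$-Gorenstein degeneration of quintic-plus-line pairs, yet both boundary divisors pass through the $\frac{1}{4}(1,1)$-point and neither is Cartier there; the order of $[H]$ in the local class group $\mathbb{Z}/4$ is $h=2$, which divides neither $d_1=5$ nor $d_2=1$. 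So the divisibility $h\mid d_i$ fails for every $i$, while the conclusion of the lemma ($n=2\le 5$) still holds; taken literally, your argument outputs $n\le\min_i d_i=1$ and would wrongly exclude $\mathbb{P}(1,1,4)$, $X_{26}$ and $\mathbb{P}(1,4,25)$ altogether, since on each of these surfaces the degeneration of the line passes through the singular point ($\{xy=0\}$, $\{xy^2=0\}$, $\{x^2y^2=0\}$ respectively). The repair you anticipate --- that the curve controlling the index avoids the singular point, as the quintic does on $\mathbb{P}(1,4,25)$ --- is circular (that avoidance is itself deduced from K-semistability, surface by surface, in Theorems \ref{whenX26appear} and \ref{whenP1425appear}, which rely on this lemma) and moreover false on $\mathbb{P}(1,1,4)$, where the quintic degeneration does pass through the singularity with nontrivial local class. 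A correct proof of the second bound must handle boundary components through $x$: when $d_i[H]_x\neq 0$, every effective member of the class is forced through $x$ with multiplicity bounded below by the minimal degree of a semi-invariant of the corresponding weight, and this multiplicity must then be played off against the klt condition and the normalized-volume inequality; a purely formal divisibility statement in the local class group cannot yield the lemma. (A minor further slip: what you need globally is that $\mathrm{Cl}(X)$ is torsion-free, so that $D_i\sim_{\mathbb{Q}}d_iH$ upgrades to $D_i\sim d_iH$; $\mathrm{Cl}(X)$ is \emph{not} generated by $H$ in general --- on $\mathbb{P}(1,1,4)$ one has $H=\mathcal{O}(2)$ while the generator is $\mathcal{O}(1)$.)
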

\begin{proof}
    The proof is the same as in \cite{ADL19}.
\end{proof}

\section{K-stability of complexity one pairs}

In this section, we will compute the K-stability threshold of the complexity one log Fano pairs. We will see that such pair will appear in the wall of K-moduli space as the centers of wall crossing morphisms.

\begin{defn}
    Fix some $(a,b)\in\left\{\text{$(x,y)$ $|$ $5x+y\le3$, $x\in\mathbb{Q}_{\ge0}$, $y\in\mathbb{Q}_{\ge0}$}\right\}$. We call this region by log Fano domain.
    Let's denote the moduli stack which parameterize the log Fano pair that admits a $\mathbb{Q}$-Gorenstein smoothing to $(\mathbb{P}^2,aQ+bL)$ by $\overline{\mathcal{P}}^K_{(a,b)}$ and denote the corresponding moduli space by $\overline{P}^K_{(a,b)}$, where $Q\in|\mathcal{O}_{\mathbb{P}^2}(5)|$ is a degree 5 plane curve and $L\in|\mathcal{O}_{\mathbb{P}^2}(1)|$ is a line.
\end{defn}

Firstly, Let's figure out the total space of K-semistable log Fano pairs contained in the K-moduli space. This is the direct consequence of the index estimate:

\begin{thm}\label{degeneration}
    If K-semistable log Fano pair $(X,aQ+bL)$ is parameterized by $\overline{\mathcal{P}}^K_{(a,b)}$, then $X$ is only possible isomorphic to $\mathbb{P}^2$, $\mathbb{P}(1,1,4)$, $\mathbb{P}(1,4,25)$ or $X_{26}$.
\end{thm}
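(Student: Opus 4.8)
The plan is to fix the global invariants of $X$ from the smoothing, use Hacking's formula to force Picard rank one and to identify all singularities as Wahl singularities, then apply the index estimate of Lemma~\ref{index} to bound them, and finally read off the finite list from the classification of Picard-rank-one degenerations of $\mathbb{P}^2$.

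First I would fix the invariants. Since $(X,aQ+bL)$ is $\mathbb{Q}$-Gorenstein smoothable to $(\mathbb{P}^2,aC+bL)$, the surface $X$ is a normal rational projective surface with only T-singularities, and $K_X^2=K_{\mathbb{P}^2}^2=9$ because $K^2$ is constant in a $\mathbb{Q}$-Gorenstein family. Substituting into Hacking's formula $K_X^2+\rho(X)+\sum_p\mu_p=10$ gives $\rho(X)+\sum_p\mu_p=1$; since $\rho(X)\ge1$ and every $\mu_p\ge0$, I conclude $\rho(X)=1$ and $\mu_p=0$ at every singular point. By the KSB classification a T-singularity with $\mu_p=0$ is not Du Val and has $d=1$, so every singular point of $X$ is a Wahl singularity $\frac{1}{n^2}(1,na-1)$ with $\gcd(a,n)=1$.

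Next I would convert the index estimate into a bound on $n$. The boundary is $Q\in|\mathcal{O}(5)|$ and $L\in|\mathcal{O}(1)|$, so $d_1=5$, $d_2=1$ and $5a+b\in(0,3)$; as $3\nmid5$ and $3\nmid1$, the first branch of Lemma~\ref{index} gives
\[
\ind(x,K_X)\le\min\left\{\left\lfloor\tfrac{3}{3-(5a+b)}\right\rfloor,\ 5\right\}\le5
\]
at every singular point $x$. Since the Gorenstein index of $\frac{1}{n^2}(1,na-1)$ equals $n^2/\gcd(n^2,na)=n$, this forces $n\le5$ throughout.

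Finally I would feed $n\le5$ into the Hacking--Prokhorov classification of $\mathbb{Q}$-Gorenstein smoothable del Pezzo surfaces of Picard rank one with $K^2=9$: each is either a weighted projective plane $\mathbb{P}(a^2,b^2,c^2)$ attached to a Markov triple $(a,b,c)$, or one of the non-toric surfaces on their list. The largest-weight vertex of $\mathbb{P}(a^2,b^2,c^2)$ carries a Wahl singularity of index $c$, the largest Markov entry, so $n\le5$ admits only the triples $(1,1,1),(1,1,2),(1,2,5)$, namely $\mathbb{P}^2$, $\mathbb{P}(1,1,4)$ and $\mathbb{P}(1,4,25)$ (already $(1,5,13)$ produces $\mathbb{P}(1,25,169)$, whose weight-$169$ vertex has index $13>5$, and all later triples are excluded for the same reason), while among the non-toric surfaces the bound leaves exactly $X_{26}$, which carries a single index-$5$ Wahl singularity and so is genuinely non-toric. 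I expect this last step to be the main obstacle: the earlier steps are formal once Lemma~\ref{index}, Hacking's formula and the KSB list are granted, but closing the argument requires extracting the finitely many Picard-rank-one candidates from the classification, checking the Gorenstein index at each singular vertex, and verifying that $X_{26}$ is the unique non-toric degeneration of $\mathbb{P}^2$ surviving the bound $n\le5$.
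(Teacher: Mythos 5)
Your proof is correct and takes essentially the same route as the paper: the paper's proof consists of exactly the two steps you describe, namely applying Lemma~\ref{index} to bound the Gorenstein index at every singular point by $5$, and then invoking the classification of $\mathbb{Q}$-Gorenstein smoothable log del Pezzo degenerations of $\mathbb{P}^2$ from \cite{Hacking} to read off the list $\mathbb{P}^2$, $\mathbb{P}(1,1,4)$, $\mathbb{P}(1,4,25)$, $X_{26}$. The only difference is that you unwind the content of that citation explicitly (constancy of $K_X^2$, Hacking's formula forcing $\rho(X)=1$ and Wahl singularities, the Markov-triple list and the non-toric surface $X_{26}$), whereas the paper delegates all of this to the reference.
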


\begin{proof}
    By lemma\ref{index}, we know that the Gorenstein index of $X$ at any singular points is no bigger than $5$. Thus by the classification of log del Pezzo surfaces \cite{Hacking}, we conclude that X could only possible be $\mathbb{P}^2$, $\mathbb{P}(1,1,4)$, $\mathbb{P}(1,4,25)$ or $X_{26}$.
\end{proof}

Before we compute the walls of K-moduli space, we should notice the following easy but useful result:

\begin{thm}\label{critical}
    Notation as above. There exists a wall-crossing morphisms at $(a_0,b_0)$    
    if and only if there exists complexity one or toric $(a_0,b_0)$-K-polystable log Fano pairs. We call these pairs the "critical" pairs.
\end{thm}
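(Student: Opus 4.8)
The plan is to establish the claimed equivalence by combining the general wall-crossing machinery recalled in the previous section (the results of \cite{ADL19} and \cite{zhou2023shapeksemistabledomainwall}) with the characterization of strictly K-polystable objects as those admitting non-product test configurations with vanishing Futaki invariant. First I would interpret "wall-crossing morphism at $(a_0,b_0)$" precisely: by the chamber-decomposition theorem, a wall occurs exactly at the boundary between two adjacent chambers $P_i$ and $P_j$ sharing a face along the critical line through $(a_0,b_0)$, and across such a face the K-(poly/semi)stability conditions genuinely change. The key conceptual point is that the open immersions $\Phi^\pm$ from the adjacent chambers into the wall stack fail to be isomorphisms precisely because new K-polystable objects appear on the wall that are strictly K-semistable (hence destabilized into a product test configuration) on one or both sides.

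The forward direction (wall $\Rightarrow$ critical pair) I would argue as follows. If there is a genuine wall at $(a_0,b_0)$, then the good moduli spaces $\overline{P}^K_{(a_0^-,b_0^-)}$ and $\overline{P}^K_{(a_0^+,b_0^+)}$ on the two sides are not both isomorphic to $\overline{P}^K_{(a_0,b_0)}$ via the induced projective morphisms $\phi^\pm$; otherwise the stability conditions would not change across the face, contradicting the definition of a wall. Since the good moduli space parameterizes K-polystable pairs and only the polystable objects differ, there must be a closed point of $\overline{P}^K_{(a_0,b_0)}$ parameterizing a pair $(X, a_0 Q + b_0 L)$ that is $(a_0,b_0)$-K-polystable but becomes strictly K-semistable (and degenerates to a new polystable object) as the coefficient crosses the wall. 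Such a pair admits a non-trivial $\mathbb{G}_m$-degeneration preserving S-equivalence, so its automorphism group contains a torus $\mathbb{T}$ of positive dimension; by the classification in Theorem \ref{degeneration} the surface $X$ is one of $\mathbb{P}^2$, $\mathbb{P}(1,1,4)$, $\mathbb{P}(1,4,25)$, $X_{26}$, all toric, and the stabilizing torus together with the boundary has complexity at most one. Thus the critical pair is toric or complexity one, as required.

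For the reverse direction (critical pair $\Rightarrow$ wall) I would use the criterion of Theorem \ref{complexity1}. Suppose $(X, a_0 Q + b_0 L)$ is a complexity-one or toric $(a_0,b_0)$-K-polystable pair. The vanishing of the Futaki character along the one-parameter subgroups of $\mathbb{T}$ and the condition $\beta_{(X,D)}(E)=0$ on horizontal invariant divisors are satisfied only on a codimension-one locus in the $(a,b)$-parameter space: perturbing $(a,b)$ to one side makes $\beta$ strictly positive while perturbing to the other side makes it negative for some $\mathbb{T}$-invariant divisor, so the pair is K-polystable exactly on the line and strictly semistable or unstable off it. Consequently the objects represented change across $(a_0,b_0)$, which by the definition of the chamber decomposition forces a wall there.

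The main obstacle I anticipate is the reverse direction, specifically verifying that the existence of a complexity-one K-polystable pair genuinely forces a change in the moduli problem rather than merely a formal coincidence that leaves the moduli space unchanged. One must rule out the degenerate possibility that a critical pair sits "inside" a chamber (a non-wall critical value), which requires showing that the $\beta$-invariants of the relevant $\mathbb{T}$-invariant divisors depend nontrivially and monotonically on the coefficients $(a,b)$ near $(a_0,b_0)$. This monotonicity is where the explicit geometry of the four surfaces and the linearity of $A_{(X,D)}$ and $S_{(X,D)}$ in the boundary coefficients must be invoked, and I expect the careful bookkeeping of horizontal versus vertical invariant divisors to be the technical heart of the argument.
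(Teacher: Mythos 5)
Your forward implication (wall $\Rightarrow$ critical pair) is essentially the paper's entire proof: despite the ``if and only if'' in the statement, the paper's argument only shows that a wall forces a strictly K-polystable pair, whose automorphism group then contains a positive-dimensional torus, and since $\dim X=2$ the pair is toric or of complexity one. The paper gets the torus from reductivity of $\aut(X;Q,L)$ via \cite{JHHLX}; your route through a nontrivial product test configuration is actually the more complete justification (reductivity alone does not force positive dimension, since finite groups are reductive). Your appeal to Theorem \ref{degeneration} is unnecessary for this step; $\dim X = 2$ suffices. So on this direction you and the paper agree.

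The genuine gap is in your converse, and the obstacle you flag --- ruling out a critical pair sitting in the interior of a chamber --- is not just the technical heart, it is fatal: the converse is false as stated, and the paper never proves it. Concretely, in this very moduli problem take two conics $C_i=\{x^2-r_iyz=0\}$, $r_1\neq r_2$, which are tangent to each other at $[0:0:1]$ and $[0:1:0]$, and set $Q=\{x(x^2-r_1yz)(x^2-r_2yz)=0\}$, $L=\{x=0\}$ (the line through the two tangency points). This pair is invariant under $\lambda\colon t\cdot[x:y:z]=[tx:t^2y:z]$, hence of complexity one. All $\mathbb{T}$-invariant divisors on $\mathbb{P}^2$ (the coordinate lines and the conics $\{x^2=cyz\}$) are vertical, with
\[
\beta(H_x)=\tfrac{2}{3}(a-b),\qquad \beta(H_y)=\beta(H_z)=\tfrac{1}{3}(5a+b),
\]
and $\beta>0$ on the conics, while the Futaki character is computed from the $(1,2)$-weighted blowup $E$ at $[0:0:1]$: since $\ord_E(Q)=1+2+2=5$ and $\ord_E(L)=1$, one gets $Fut(\lambda)=(5-5)a+(1-1)b$, identically zero in $(a,b)$. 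By the paper's own Theorem \ref{complexity1}, this pair is therefore $(a,b)$-K-polystable for \emph{every} $(a,b)$ in the open region $\{a>b,\ 5a+b<3\}$, i.e.\ at parameters lying on no wall (this is the quintic analogue of the ``cat-eye'' quartics of \cite{ADL19}, which are K-polystable for a whole interval of coefficients). So the sign-change/monotonicity you hope to extract from the linearity of $\beta$ in $(a,b)$ simply does not occur for such pairs: the relevant linear functionals either stay positive or vanish identically. The only statement that is true, provable, and actually used in the paper is the one-directional inclusion: every wall carries a toric or complexity-one K-polystable pair, so the walls lie among the finitely many lines cut out by such pairs, and each candidate line must then be examined separately to decide whether the moduli space genuinely changes there.
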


\begin{proof}
    If the moduli space admits a wall crossing:
    \[
    \begin{tikzcd}
    \overline{P}^K_{(a_0-\epsilon,b_0-\delta)} \arrow[rd]\arrow[rr,dashrightarrow] & & \overline{P}^K_{(a_0+\epsilon,b_0+\delta)} \arrow[ld]\\
    & \overline{P}^K_{(a_0,b_0)} &
    \end{tikzcd}
    \]
    there must be some strictly K-polystable pairs $(X,a_0Q+b_0L)$ in $\overline{P}^K_{(a_0,b_0)}$, hence the automorphism group $Aut(X;Q,L)$ is reductive group (\cite{JHHLX}). Thus the maximal torus of $\aut{(X,a_0Q+b_0L)}$ have positive dimension.
\end{proof}

Thus in order to look for the K-moduli walls, we only need to compute the K-stability threshold of critical pairs. By theorem \ref{degeneration}, let's figure out these pairs case by case by case. 

\begin{notation}
    Let $(X,aQ+bL)$ be a K-semistable log Fano pair, where $X$ equals to $\mathbb{P}^2$, $\mathbb{P}(1,1,4)$, $\mathbb{P}(1,4,25)$ or $X_{26}$ and $C\sim-\frac{5}{3}K_{X}$, $L\sim-\frac{1}{3}K_{X}$. Let $\aut(X;Q,L)$ be the subgroup of $\aut(X)$ which is generated by the automorphism preserved $Q$ and $L$ simultaneously. Denote the maximal torus by $\mathbb{T}\subset\aut(X;Q,L)$. When we consider the prime divisor $E$ over $X$, we assume the birational model contains $E$ by $\mu: Y\rightarrow X$.
\end{notation}

\subsection{The critical $\mathbb{P}^2$-pair}

In this case, $X\cong\mathbb{P}^{2}$, $Q^{(0)}$ is plane quintic curve, $L^{(0)}$ is a line.

Firstly, we show that:

\begin{thm}
    The K-moduli space $\mathbb{P}^{K}_{a,b}$ is non empty if and only if $5a\ge2b$.
\end{thm}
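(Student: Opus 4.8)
The plan is to turn the non-emptiness question into a single K-semistability check via openness, and then to apply the valuative criterion. First I would use that K-semistability is open in $\mathbb{Q}$-Gorenstein families: by definition every pair parametrized by $\overline{P}^K_{(a,b)}$ admits a $\mathbb{Q}$-Gorenstein smoothing whose general fibre is $(\mathbb{P}^2,aQ+bL)$, and the K-semistable locus of such a family is open, so if it is nonempty it contains the general fibre. Consequently $\overline{P}^K_{(a,b)}\neq\emptyset$ if and only if the general pair $(\mathbb{P}^2,aQ+bL)$ --- with $Q$ a smooth quintic and $L$ transverse to $Q$ --- is K-semistable. This reduction is what lets me test divisors directly on $\mathbb{P}^2$ rather than on an unknown degeneration. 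Write $D=aQ+bL$, let $H$ be the line class and set $\lambda:=3-5a-b>0$, so that $-K_{\mathbb{P}^2}-D\Qequiv\lambda H$ and $\vol(-K_{\mathbb{P}^2}-D)=\lambda^2$.

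For necessity I would test against the boundary line $L$. Since $L\subset\mathbb{P}^2$ is already a divisor, $A_{(\mathbb{P}^2,D)}(L)=1-b$, and from $-K_{\mathbb{P}^2}-D-tL\Qequiv(\lambda-t)H$ one gets $S_{(\mathbb{P}^2,D)}(L)=\frac{1}{\lambda^2}\int_0^{\lambda}(\lambda-t)^2\,dt=\frac{\lambda}{3}$, whence $\beta_{(\mathbb{P}^2,D)}(L)=\frac{5a-2b}{3}$. If the general pair is K-semistable then $\beta_{(\mathbb{P}^2,D)}(L)\ge0$ forces $5a\ge 2b$; contrapositively, when $5a<2b$ the line $L$ destabilizes every member of the smoothing family, so by the openness reduction $\overline{P}^K_{(a,b)}$ is empty.

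For sufficiency I would assume $5a\ge 2b$ and check $\beta(E)\ge0$ for every prime divisor $E$ over $\mathbb{P}^2$. The candidate worst divisors all come out nonnegative here: a general line gives $\frac{5a+b}{3}$, the blow-up of a general point gives $\frac{2(5a+b)}{3}$, the blow-up of a point of $Q\cap L$ gives $\frac{7a-b}{3}$, and $L$ itself gives $\frac{5a-2b}{3}$; since $7a-b=(5a-2b)+(2a+b)\ge0$, the divisor $L$ is the binding constraint. The main obstacle is ruling out all remaining valuations, and I would address it with an Abban--Zhuang flag estimate along a flag $p\in\ell\subset\mathbb{P}^2$: this bounds $\inf_E A_{(\mathbb{P}^2,D)}(E)/S_{(\mathbb{P}^2,D)}(E)$ from below in terms of the restricted volumes of $\lambda H-tE$ to $\ell$ and the multiplicity of $D$ at $p$, so that the bound can reach $1$ only for $\ell=L$ and $p\in Q\cap L$, recovering exactly $5a\ge 2b$. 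On the boundary wall $5a=2b$ the automorphism group gains a one-dimensional torus, so there I would instead read off K-polystability from the complexity-one criterion (Theorem \ref{complexity1}) applied to the symmetric configuration $Q=\{zxy(x+\lambda_1 y)(y+\lambda_2 x)=0\}$, $L=\{z=0\}$, giving an explicit K-polystable pair and hence non-emptiness up to and including the wall.
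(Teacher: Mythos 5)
Your necessity argument is correct and is essentially the paper's own: the paper also tests the divisorial valuation of $L$ and gets $1-b\ge A(L)\ge S(L)=1-\tfrac{5}{3}a-\tfrac{1}{3}b$, i.e.\ $5a\ge 2b$. The only difference is the reduction that lets you work on $\mathbb{P}^2$: you invoke openness of K-semistability in $\mathbb{Q}$-Gorenstein families to pass to the general plane pair, whereas the paper invokes its classification of K-semistable degenerations (Theorem \ref{degeneration}) and forward references to Theorems \ref{whenp114appear}, \ref{whenX26appear}, \ref{whenP1425appear} to rule out the surfaces $\mathbb{P}(1,1,4)$, $X_{26}$, $\mathbb{P}(1,4,25)$ when $5a<2b$. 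Both reductions are legitimate; yours is more self-contained for this direction, the paper's reuses machinery it needs anyway. Your computed values $\beta(L)=\tfrac{5a-2b}{3}$, $\beta(E_p)=\tfrac{7a-b}{3}$ for $p\in Q\cap L$, etc., are all correct.

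The gap is in sufficiency. Your entire argument for the interior $5a>2b$ rests on an Abban--Zhuang flag estimate that is announced but never carried out: the claim that the two-step bound ``can reach $1$ only for $\ell=L$ and $p\in Q\cap L$, recovering exactly $5a\ge 2b$'' is an assertion, and it is precisely the nontrivial half of the theorem, so as written the proof is incomplete. (Your fallback on the wall, checking the symmetric configuration $\left\{zxy(x+\lambda_1y)(y+\lambda_2x)=0\right\}$, $\left\{z=0\right\}$ with Theorem \ref{complexity1}, is exactly what the paper does in the theorem following this one, but that only produces points of the moduli space \emph{on} the line $2b=5a$, not in the open region.) Note also that the heavy machinery is unnecessary here: since $\mathbb{P}^2$ is K-polystable, interpolation of K-stability (as in \cite{ADL19}) reduces sufficiency to checking that the log Calabi--Yau endpoint $\bigl(\mathbb{P}^2,\tfrac{3}{5a+b}(aQ+bL)\bigr)$ is log canonical for the general pair ($Q$ smooth, $L$ transverse); the coefficient conditions are $\tfrac{3a}{5a+b}\le 1$, which always holds, and $\tfrac{3b}{5a+b}\le 1$, which is exactly $2b\le 5a$. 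This one-line interpolation argument closes your gap for the whole region $5a\ge 2b$ at once, and is consistent with the paper's observation that $\lct(\mathbb{P}^2,Q+tL)=\tfrac{3}{5+t}$ puts the endpoints of the critical lines on the boundary of the log Fano domain.
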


\begin{proof}
    By Theorem \ref{degeneration}, if $[(X;Q,L)]\in\overline{P}^K_{a,b}$, then $X$ can only possible be $\mathbb{P}^2$, $\mathbb{P}(1,1,4)$, $X_{26}$ and $\mathbb{P}(1,4,25)$. For the later three surfaces, we will show in Theorem \ref{whenp114appear}, Theorem \ref{whenX26appear}
    and Theorem \ref{whenP1425appear} that they are unstable if $5a<2b$. For the remaining case that $X\cong\mathbb{P}^2$, consider the divisorial valuation defined by $L$. By computation we have: $A(L)=1-a\ord_{L}(Q)-b\ord_{L}(L)$ and $S(L)=1-\dfrac{5}{3}a-\dfrac{1}{3}b$. If $(\mathbb{P}^2,aQ+bL)$ is K-semistable, then by valuative criterion:
    \begin{equation*}
        1-b\ge A(L)\ge S(L)=1-\dfrac{5}{3}a-\dfrac{1}{3}b
    \end{equation*}
    which equals to $5a\ge2b$.
 \end{proof}

 Then we will show that there indeed exists the $(a_0,b_0)$-K-semistable log Fano pair $(\mathbb{P}^2;Q,L)$ where $5a_0=2b_0$. In fact, we will find out all critical K-polystable pair associate to $\mathbb{P}^2$:

\begin{thm}
    The walls and the corresponding critical pairs associated to $\mathbb{P}^2$ are listed in the following Table\ref{KwallP2}:
    \begin{center}
    \renewcommand*{\arraystretch}{1.2}
    \begin{table}[ht]
    \centering
      \begin{tabular}{ |c |c |c|}
    \hline
     wall &  curve $Q^{(0)}$ on $\mathbb{P}^2$ & curve $L^{(0)}$ on $\mathbb{P}^2$  \\  \hline
     
      $2b=5a$   &  $\left\{zxy(x+\lambda_1y)(y+\lambda_2x)=0\right\}$   & $\left\{z=0\right\}$ \\ \hline 
      
     $5b=11a$  &  $\left\{x^2(y^3+x^2z)=0\right\}$   & $\left\{z=0\right\}$\\ \hline   
     $b=2a$ &     $\left\{yx^2(zx+y^2)=0\right\}$   & $\left\{z=0\right\}$ \\ \hline  
     $7b=13a$ &     $\left\{xy(y^3+zx^2)=0\right\}$   & $\left\{z=0\right\}$ \\ \hline
     $4b=7a$ &     $\left\{x^2(y^3+z^2x)=0\right\}$   & $\left\{z=0\right\}$ \\ \hline
     $3b=5a$ &     $\left\{y^5+x^4z=0\right\}$   & $\left\{z=0\right\}$ \\ \hline
     $5b=8a$ &     $\left\{xy(y^3+x^2z)=0\right\}$   & $\left\{z=0\right\}$ \\ \hline
     $7b=10a$ &     $\left\{y(y^4+x^3z)=0\right\}$   & $\left\{z=0\right\}$ \\ \hline
     
     $a=b$ &     $\left\{zxy(x+\lambda_1y)(x+\lambda_2y)=0\right\}$ or $\left\{y^2(y^3+x^2z)\right\}$  & $\left\{z=0\right\}$ \\ \hline
     
     $8b=5a$ &     $\left\{y^5+x^3z^2=0\right\}$   & $\left\{z=0\right\}$ \\ \hline
     $5b=2a$ &     $\left\{xz(y^3+x^2z)=0\right\}$   & $\left\{z=0\right\}$ \\ \hline
     $4b=a$ &     $\left\{yx(y^3+z^2x)=0\right\}$   & $\left\{z=0\right\}$ \\ \hline
     $7b=a$ &     $\left\{z(y^4+x^3z)=0\right\}$   & $\left\{z=0\right\}$ \\ \hline
    
     \end{tabular}
     \caption{ K-moduli walls from $\mathbb{P}^2$}
     \label{KwallP2}
\end{table}
\end{center}
\end{thm}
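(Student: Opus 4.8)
The plan is to invoke Theorem \ref{critical}: a wall of $\overline{P}^K_{a,b}$ whose center degenerates to $\mathbb{P}^2$ occurs exactly at those $(a_0,b_0)$ admitting an $(a_0,b_0)$-K-polystable pair $(\mathbb{P}^2;Q,L)$ with $\dim\mathbb{T}>0$, where $\mathbb{T}\subset\aut(\mathbb{P}^2;Q,L)$ is a maximal torus. Since $\dim\mathbb{P}^2=2$ such a pair has complexity $0$ or $1$; I will show every critical $\mathbb{P}^2$-pair carries a one-dimensional torus (the extra relations in $Q$ break the rank-$2$ torus of $\mathbb{P}^2$ down to a single $\mathbb{G}_m$) and then apply the complexity-one criterion, Theorem \ref{complexity1}. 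The argument therefore splits into (i) classifying the configurations $(Q,L)$ that admit a $\mathbb{G}_m$-action, and (ii) running the conditions of Theorem \ref{complexity1} on each configuration to read off the critical line.

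For (i), after acting by $\PGL_3$ I may assume $L=\{z=0\}$ and that $\mathbb{T}$ acts diagonally by $t\cdot[x:y:z]=[t^{w_1}x:t^{w_2}y:t^{w_3}z]$. Then $L$ is automatically invariant, and $\mathbb{T}$-semi-invariance forces $Q$ to be supported on those monomials $x^iy^jz^k$ ($i+j+k=5$) of a single weight $iw_1+jw_2+kw_3$. Enumerating the finitely many weight vectors $(w_1,w_2,w_3)$, up to rescaling and permuting the roles of the coordinates, that carry a quintic semi-invariant produces the finite list of candidate curves in Table \ref{KwallP2}; configurations whose stabilizer is genuinely one-dimensional retain the moduli parameters $\lambda_i$ recorded there, whereas the more degenerate $Q$ (such as $x^2(y^3+x^2z)$) are rigid.

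For (ii), the key point is that, for the divisorial valuation $\ord_E$ induced by the generating $1$-PS, the Futaki character and $\beta_{(\mathbb{P}^2,aQ+bL)}(E)$ compute the same Donaldson--Futaki weight, so the two equality conditions of Theorem \ref{complexity1} (vanishing of $\beta$ on every horizontal invariant divisor and vanishing of the Futaki character) collapse to one linear equation in $(a,b)$ --- the critical line. When $\mathbb{T}$ acts with weights $(1,1,0)$, fixing $L=\{z=0\}$ and leaving $Q$ a binary form in $x,y$, the horizontal divisor is $L$ itself and the clean computation $A(L)=1-a\,\ord_L(Q)-b$, $S(L)=(3-5a-b)/3$ gives $\beta(L)=\tfrac{1}{3}\bigl(5a-2b-3a\,\ord_L(Q)\bigr)$; the values $\ord_L(Q)=0,1$ then recover the walls $2b=5a$ and $a=b$ respectively. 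For the remaining weight vectors the horizontal divisor is instead an exceptional divisor extracted by a weighted blow-up $\mu$ of a $\mathbb{T}$-fixed point (a base point of the pencil cut out by the invariant monomials); there I compute $A$ combinatorially and obtain $S$ by writing $\vol\bigl(-\mu^*(K_{\mathbb{P}^2}+aQ+bL)-tE\bigr)$ as a piecewise-polynomial function of $t$ through its Zariski decomposition, and setting $\beta=0$ yields the tabulated wall.

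It then remains to certify that each candidate is genuinely K-polystable on its line and not merely balanced: I verify $\beta(E)>0$ for every vertical invariant divisor $E$. Exploiting the complexity-one structure, the vertical invariant valuations are organised over the quotient map $\mathbb{P}^2\dashrightarrow\mathbb{P}^1$, and the associated $\beta$-function is piecewise linear in the combinatorial parameters, so positivity reduces to finitely many extremal rays; this simultaneously certifies polystability and determines which segment of each line lies in the log Fano domain $5a+b\le3$. The main obstacle will be twofold: proving the enumeration in (i) is exhaustive up to $\PGL_3$, and carrying out the $S$-invariant computations for the exceptional horizontal divisors, whose Zariski decompositions change chamber as $t$ grows and form the technical heart of the argument; reducing the a priori infinite positivity check on vertical divisors to a finite one is the remaining delicate point.
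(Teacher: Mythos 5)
Your overall strategy is the same as the paper's: invoke Theorem \ref{critical}, diagonalize the torus, enumerate the invariant configurations $(Q,L)$, and extract one linear equation per configuration from the equality conditions of Theorem \ref{complexity1} (using that $Fut(\lambda)=\beta(E_\lambda)$ for the weighted blow-up divisor induced by the $1$-PS), then certify polystability by checking $\beta>0$ on the vertical invariant divisors. Your sample computation in the $(1,1,0)$-weight case, $\beta(L)=\tfrac{1}{3}\bigl(5a-2b-3a\,\ord_L(Q)\bigr)$ giving the walls $2b=5a$ and $a=b$, reproduces the paper's equations exactly, and your Zariski-decomposition route to $S(E)$ is a valid (if more laborious) substitute for the paper's shortcut $S(E)=(n_1+n_2)(1-\tfrac{5}{3}a-\tfrac{1}{3}b)$.

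There is, however, a genuine gap: your claim that every critical $\mathbb{P}^2$-pair has a one-dimensional maximal torus (``the extra relations in $Q$ break the rank-$2$ torus down to a single $\mathbb{G}_m$'') is false, and it is precisely the case excluded by this claim that your framework cannot handle. Theorem \ref{critical} requires considering \emph{complexity one or toric} polystable pairs, and the paper shows that the toric pair $(\mathbb{P}^2, a\{zx^2y^2=0\}+b\{z=0\})$ is a critical pair on the wall $a=b$; its automorphism group contains the full two-dimensional torus, so its complexity is $0$, and Theorem \ref{complexity1} (whose hypothesis is that the \emph{maximal} torus acts with complexity exactly one) does not apply to it. You cannot fix this by restricting to a $\mathbb{G}_m\subset\mathbb{T}$, since the criterion is stated for a maximal torus. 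Concretely, this breaks your proof of exhaustiveness: purely monomial quintics $Q=x^{q_x}y^{q_y}z^{q_z}$ are semi-invariant for \emph{every} weight vector (which also undercuts your finiteness assertion that only ``finitely many weight vectors carry a quintic semi-invariant''), and without a separate toric analysis you cannot exclude that such configurations produce walls absent from Table \ref{KwallP2}. The paper closes this case with the toric criterion --- imposing $\beta(H_x)=\beta(H_y)=\beta(H_z)=0$, since for a toric surface pair all invariant divisors are horizontal --- which forces $\ord_{H_x}(Q)=\ord_{H_y}(Q)=2$, $\ord_{H_z}(Q)=1$ and the single wall $a=b$. You need to add this step; the rest of your outline then matches the paper's argument.
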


\begin{proof}
    Firstly, we assume $(\mathbb{P}^2,aQ^{(0)}+bL^{(0)})$ is a toric pair. The 2 dimensional torus $\mathbb{T}\cong\mathbb{C}^*\times\mathbb{C}^*$ act on $\mathbb{P}^2$ is defined by:
    \[
    (s,t)\circ[x:y:z]=[s^{\mu_1}t^{\lambda_1}x:s^{\mu_2}t^{\lambda_2}y:s^{\mu_3}t^{\lambda_3}z],\quad\forall(s,t)\in\mathbb{T}
    \]
    Note that $\mathbb{P}^2$ has no invariant curve with degree greater than one. In fact, we can assume $0=\lambda_3=\min\left\{\lambda_1,\lambda_2,\lambda_3\right\}$, $0=\mu_3=\min\left\{\mu_1,\mu_2,\mu_3\right\}$. If $x^iy^jz^k$ and $x^{i'}y^{j'}z^{k'}$ are two monimals in the equation of some irreducible invariant curve $C$, then we have:
    \[
    \left\{
    \begin{array}{cc}
       (i-i')\lambda_1+(j-j')\lambda_2=0 &  \\
       (i-i')\mu^1+(j-j')\mu_2=0  & 
    \end{array}
    \right.
    \]
    which implies that $i=i'$ and $j=j'$, so $C$ is a line defined by coordinates.

    Thus $Q^{(0)}$ is the union of $H_x$, $H_y$ and $H_z$, and $L^{(0)}=H_x$, $H_y$ or $H_z$. Moreover, both of $H_x$, $H_y$, $H_z$ are horizontal divisors.

    By computation, we have
   \begin{align}
       A(H_i)=&1-a\ord_{H_i}(Q^{(0)})-b\ord_{H_i}(L^{(0)}),\\
       S(H_i)=&1-\frac{5}{3}a-\frac{1}{3}b.
   \end{align}
   If $(\mathbb{P}^2,aQ^{(0)}+bL^{(0)})$ is K-polystable, we may assume $\ord_{H_z}(L^{(0)})=1$, then $\ord_{H_z}(L^{(0)})=\ord_{H_y}(L^{(0)})=0$ and $\ord_{H_z}(Q^{(0)})\le1$, $\ord_{H_x}(Q^{(0)})\ge2$, $\ord_{H_y}(Q^{(0)})\ge2$, which implies that $Q^{(0)}$ should be $\left\{zx^2y^2=0\right\}$. In this case the corresponding wall is $a=b$.

    From now on let's consider $(\mathbb{P}^2,aQ^{(0)}+bL^{(0)})$ as a strictly complexity one pair. Assume $\mathbb{C}^*$ acts on $\mathbb{P}^2$ defined by $t\circ[x:y:z]=[t^{m_1}x:t^{m_2}y:t^{m_3}z]$, $\forall t\in\mathbb{C}^*$.

    If $\exists i\neq j\in\left\{1,2,3
    \right\}$ such that $m_i=m_j$. Without generality, we may assume that $m_1=m_2>m_3=0$. Then the above $\mathbb{C}^*$-action can be written as $t\circ[x:y:z]=[x:y:t^{m}z]$, for some $m>0$. In this case, the only horizontal divisor is $H_{z}$. A direct computation shows that 
    \begin{align*}
         A(H_z)=&1-a\ord_{H_z}(Q^{(0)})-b\ord_{H_z}(L^{(0)}),\\
       S(H_z)=&1-\frac{5}{3}a-\frac{1}{3}b.
    \end{align*}
    If $(\mathbb{P}^2,aQ^{(0)}+bL^{(0)})$ is K-polystable, using theorem \ref{complexity1} we know that $\beta(H_z)=0$, which give us:
    \begin{equation}\label{c1}
        (\ord_{H_z}(Q^{(0)})-\frac{5}{3})a+(\ord_{H_z}(L^{(0)})-\frac{1}{3})b=0
    \end{equation}

    On the other hand, any line passing through the point $[0:0:1]$ is a vertical divisor. For any such line $H$, we can compute that:
    
    \begin{align*}
         A(H)=&1-a\ord_{H}(Q^{(0)})-b\ord_{H}(L^{(0)}),\\
       S(H_z)=&1-\frac{5}{3}a-\frac{1}{3}b.
    \end{align*}

    By theorem \ref{complexity1}, these divisors satisfied that $A(H)>S(H)$. Thus 
    \begin{equation}\label{c1'}
        (\frac{5}{3}-\ord_{H_z}(Q^{(0)}))a+(\frac{1}{3}-\ord_{H_z}(L^{(0)}))b>0
    \end{equation}

    Under the constraints (\ref{c1}) and (\ref{c1'}), we can find out all critical pairs and the corresponding walls with torus action of above type: 

    \begin{enumerate}
        \item If $\ord_{H_{z}}(L^{(0)})=1$. Then $\ord_{H}(L^{(0)})=0$ for $H\neq H_z$. From (\ref{c1}) we deduce that $\ord_{H_{z}}(Q^{(0)})\le1$. 
        \begin{enumerate}
            \item If $\ord_{H_{z}}(Q^{(0)})=1$, we can see that (\ref{c1}) implies that $a=b$, and conditions (\ref{c1'}) implies that $\ord_{H}(Q^{(0)})\le1$. Thus we conclude that $(\mathbb{P}^2,a(Q^{(0)}_0\cup H_z)+b\left\{z=0\right\})$ is a critical pair with the critical slope $a=b$, where $Q^{(0)}_0$ is the union of 4 distinct lines and both of them passing through the point $[0:0:1]$.
            \item If $\ord_{H_{z}}(Q^{(0)})=0$, we can see that (\ref{c1}) implies that $2b=5a$, and conditions (\ref{c1'}) implies that $\ord_{H}(Q^{(0)})\le2$. Thus we conclude that $(\mathbb{P}^2,a(Q^{(0)}_0\cup H_z)+b\left\{z=0\right\})$ is a critical pair with the critical slope $a=b$, where $Q^{(0)}_0$ is the union of 4 lines and both of them passing through the point $[0:0:1]$. Moreover, at most two couple of them identical.
        \end{enumerate}
        \item If $\ord_{H_{z}}(L^{(0)})=0$. Then $\ord_{H_0}(L^{(0)})=1$ for some $H_0\neq H_z$. For $H_{0}$, we have:
        \begin{equation}\label{c''}
            (\frac{5}{3}-\ord_{H_0})a-\frac{2}{3}b>0
        \end{equation}
        which implies that $\ord_{H_0}(Q^{(0)})\le1$.
        Moreover, (\ref{c1}) implies that $\ord_{H_z}(Q^{(0)})\ge2$. 
        \begin{enumerate}
            \item If $\ord_{H_z}(Q^{(0)})=2$. By (\ref{c''}), we can deduce that $\ord_{H_0}(Q^{(0)})=0$, and by (\ref{c1'}) we know that $\ord_{H}(Q^{(0)})\le1$. Such pair has already appear in the above discussion, which corresponding the critical slope is $a=b$.
            \item If $\ord_{H_z}(Q^{(0)})\ge3$, it is easy to check that the (\ref{c''}) is not compatible with (\ref{c1}) and (\ref{c1'}). Thus there is no critical pair satisfied these conditions.
        \end{enumerate}
    \end{enumerate}

    If $m_i\neq m_j$ for $\forall i\neq j\in\left\{1,2,3\right\}$. We can assume that $m_1>m_2>m_3$. The torus action $\lambda$ can be rewritten as $t\circ[x:y:z]=[t^{n_1}x:t^{n_2}y:z]$ with $n_1$, $n_2>0$. In this case, there is no horizontal divisor under this torus action. To figure out the remaining (potential) walls, we should compute the Futaki character $Fut(\lambda)$ of this torus action. Let
    \[
    \pi:Y\rightarrow X=\mathbb{P}^2
    \]
    be the weighted blow up at point $[0:0:1]$ along the local coordinate $(\frac{x}{z},\frac{y}{z})$ with weight $(n_1,n_2)$. Denote the exceptional divisor by $E$. By \cite{Fujita}, we know that $Fut(\lambda)=\beta(E)$, where $E$ is the exceptional divisor over $\mathbb{P}^2$ induced by $\lambda$. A direct computation shows that
    \begin{align*}
         A(E)=&n_1+n_2-a\ord_{E}(Q^{(0)})-b\ord_{E}(L^{(0)}),\\
         S(E)=&(n_1+n_2)(1-\frac{5}{3}a-\frac{1}{3})b.
    \end{align*}
    If $(\mathbb{P}^2,aQ^{(0)}+bL^{(0)})$ is K-polystable, then $Fut(\lambda)=0$ which implies that
    \begin{equation}\label{veryimportant}
        (\frac{5}{3}(n_1+n_2)-\ord_{E}(Q^{(0)}))a+(\frac{1}{3}(n_1+n_2)-\ord_{E}(L^{(0)}))b=0
    \end{equation}
    We give a algorithm about finding critical pairs and the corresponding walls as follows:

    Step 0. We can observe that $\ord_{E}(Q^{(0)})>0$, so the local equation of $Q^{(0)}$ in $\left\{z=1\right\}$ has no constant terms. 

    Step 1. By above observation, we can choose any two monimals says $x^iy^j$ and $x^{i'}y^{j'}$ appeared in the local equations of $Q^{(0)}$ in the open subset $\left\{z=1\right\}$. By $\mathbb{C}^*$-invariance, they should satisfied equation $in_1+jn_2=i'n_1+j'n_2$, where $i$, $j$, $i'$ and $j'\in\left\{0,1,...,5\right\}$;

    Step 2. Note that $\ord_{E}(Q^{(0)})=in_1+jn_2$, and $\ord_{E}(L^{(0)})\in\left\{0,n_1,n_2,2n_1,2n_2,\right\}$. Let $(i,j)$, $(i',j')$ and $\ord_{E}(L^{(0)})$ runs over all possibility, put them into (\ref{veryimportant}), we will obtain all the (potential) critical lines and the corresponding (potential) critical pairs. 
    
    Step 3. Note that $\ord_{H_{i}}(Q^{(0)})$ and $\ord_{H_{i}}(L^{(0)})$ are also fixed according to the chosen the monimals $x^iy^j$, $x^{i'}y^{j'}$ and $\ord_{E}(L)$ in Step 1 and Step 2 where $H_{i}\in\left\{H_x,H_y,H_z\right\}$. Since they are vertical divisors, so
    \begin{equation}\label{check}
        1-a\ord_{H_i}(Q^{(0)})-b\ord_{H_i}(L^{(0)})>1-\frac{5}{3}a-\frac{1}{3}b
    \end{equation}
    Using (\ref{check}), we can throw away the fake lines, i.e. the lines we got from the Step 2 which is not compatible with condition (\ref{check}).

    Step 4. The remaining vertical divisors is depending on the equation of $Q^{(0)}$ we have in Step 3. But this can be easily check case by case. 

    Using above algorithm, we finally find out all critical pairs and the critical walls as follows:
    \begin{itemize}
        \item $Q^{(0)}$ is irreducible degree 5 curve:
    
    \begin{align*}
        \text{$Q^{(0)}=\left\{y^5+x^4z=0\right\}$ or $\left\{y^5+xz^4=0\right\}$, $L^{(0)}=H_z$};\quad&3b=5a\\
        \text{$Q^{(0)}=\left\{y^5+x^3z^2=0\right\}$ or $\left\{y^5+x^2z^3=0\right\}$,  $L^{(0)}=H_z$};\quad&8b=5a  
    \end{align*}

        \item $Q^{(0)}$ is the union of irreducible degree 4 curve with a line:

        \begin{align*}
        \text{$Q^{(0)}=\left\{z(y^4+x^3z)=0\right\}$ or $\left\{x(y^4+xz^3)=0\right\}$, $L^{(0)}=H_z$ or $H_x$};\quad&7b=a\\
        \text{$Q^{(0)}=\left\{y(y^4+x^3z)=0\right\}$ or $\left\{y(y^4+xz^3)=0\right\}$,  $L^{(0)}=H_z$ or $H_x$};\quad&7b=10a\\  
        \text{$Q^{(0)}=\left\{x(y^4+x^3z)=0\right\}$ or $\left\{z(y^4+xz^3)=0\right\}$,  $L^{(0)}=H_z$ or $H_x$};\quad&7b=13a
    \end{align*}
        \item  $Q^{(0)}$ is the union of irreducible degree 3 curve with two lines:
    \begin{align*}
        \text{$Q^{(0)}=\left\{z^2(y^3+x^2z)=0\right\}$ or $\left\{x^2(y^3+xz^3)=0\right\}$, $L^{(0)}=H_x$ or $H_z$};\quad&4b=7a\\
        \text{$Q^{(0)}=\left\{y^2(y^3+x^2z)=0\right\}$ or $\left\{y^2(y^3+xz^2)=0\right\}$,  $L^{(0)}=H_z$ or $H_x$};\quad&b=a\\  
        \text{$Q^{(0)}=\left\{x^2(y^3+x^2z)=0\right\}$ or $\left\{z^2(y^3+xz^2)=0\right\}$,  $L^{(0)}=H_z$ or $H_x$};\quad&5b=11a\\
        \text{$Q^{(0)}=\left\{xy(y^3+x^2z)=0\right\}$ or $\left\{yz(y^3+xz^2)=0\right\}$,  $L^{(0)}=H_z$ or $H_x$};\quad&5b=8a\\
        \text{$Q^{(0)}=\left\{xz(y^3+x^2z)=0\right\}$ or $\left\{xz(y^3+xz^2)=0\right\}$,  $L^{(0)}=H_z$ or $H_x$};\quad&5b=2a\\
        \text{$Q^{(0)}=\left\{yz(y^3+x^2z)=0\right\}$ or $\left\{xy(y^3+xz^2)=0\right\}$,  $L^{(0)}=H_x$ or $H_z$};\quad&4b=a\\
    \end{align*}  

        \item  $Q^{(0)}$ is the union of irreducible degree 2 curve with three lines:

        \begin{equation*}
            Q^{(0)}=\left\{yx^2q=0\right\},\quad \text{$L^{(0)}=H_z$ or $H_y$};\quad b=2a
        \end{equation*}
    \end{itemize}

    Finally, by \cite{kollár1996singularitiespairs}(Proposition 8.14), we can taking the weighted blowup as above and check case by case to show that 
    \begin{equation}
        \lct(\mathbb{P}^2,Q^{(0)}+tL^{(0)})=\dfrac{3}{5+t},
    \end{equation}
    where $t=\dfrac{b}{a}$ is the slope of above critical lines. Thus the end point of above lines are all in the boundary of log Fano domain.
    
    The proof is finished.
\end{proof}



\subsection{The critical $\mathbb{P}(1,1,4)$-pair}

In this case, $X\cong\mathbb{P}(1,1,4)$, $Q^{(1)}\sim\mathcal{O}(10)$, $L^{(1)}\sim\mathcal{O}(2)$, $D=a\cdot Q^{(1)}+b\cdot L^{(1)}$, where $5a+b\le3$. Let $x,y,z$ be homogeneous coordinate on $\mathbb{P}(1,1,4)$ with weight $1,1,4$ respectively.

\begin{thm}\label{whenp114appear}
    Assume the equation of $Q^{(1)}$ is $\left\{z^2f_2(x,y)+zf_6(x,y)+f_{10}(x,y)=0\right\}$, where $f_{i}(x,y)$ is homogeneous polynomial of $x,y$ of degree $i$. Then we have 
    \begin{enumerate}
        \item If $(X,D)$ is K-semistable then $7a-b\ge3$. 
        \item If $f_2$ is rank 1 and $(X,D)$ is K-semistable, then $11a+b\ge6$.
        \item if $f_2=0$, then $(X,D)$ is K-unstable for any $(a,b)$.
    \end{enumerate}
\end{thm}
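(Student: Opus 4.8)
The plan is to apply the valuative criterion for K-(semi)stability: to force a necessary numerical condition I will exhibit one explicit prime divisor $E$ over $X=\mathbb{P}(1,1,4)$ and compute $\beta_{(X,D)}(E)=A_{(X,D)}(E)-S_{(X,D)}(E)$. K-semistability forces $\beta_{(X,D)}(E)\ge 0$, which I will arrange to read off as the stated inequality, while for part (3) the same computation will instead produce $\beta_{(X,D)}(E)<0$ for every $(a,b)$ in the log Fano domain. All the relevant divisors live over the unique singular point $p=[0:0:1]$, which is a $\tfrac14(1,1)$ quotient singularity; I will perform every local computation on the $\mu_4$-cover $\nu\colon\mathbb{A}^2_{x,y}\to X$ near $p$, where the exceptional locus has ramification index $4$, so that $\ord_E=\tfrac14\ord_{\tilde E}$ and $A_X(E)=\tfrac14 A_{\mathbb{A}^2}(\tilde E)$ for the divisor $\tilde E$ corresponding to $E$ upstairs.

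For parts (1) and (3) I take $E$ to be the ordinary blow-up of $p$ (weights $(1,1)$ upstairs). Then $A_X(E)=\tfrac12$ and $E^2=-4$, while $\ord_E(L)=\tfrac14\,\mathrm{mult}_p(\nu^\ast L)=\tfrac12$ for every $L\in|\mathcal{O}(2)|$ (its defining form $g_2(x,y)$ is homogeneous of degree $2$), and $\ord_E(Q)=\tfrac14\,\mathrm{mult}_p(\nu^\ast Q)$, which equals $\tfrac12$ exactly when $f_2\ne 0$ and is $\ge\tfrac32$ when $f_2=0$. Writing $s=6-10a-2b$ so that $-K_X-D\sim\mathcal{O}(s)$ and $\vol(-K_X-D)=s^2/4$, the divisor $\pi^\ast\mathcal{O}(s)-tE$ stays nef exactly for $t\le s/4$ (the strict transform of $H_x=\{x=0\}$ is the obstruction, and the volume vanishes there), so $\vol(\pi^\ast\mathcal{O}(s)-tE)=\tfrac{s^2}{4}+t^2E^2$ on the whole interval and $S_{(X,D)}(E)=1-\tfrac53a-\tfrac13b$. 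Hence $\beta_{(X,D)}(E)=\tfrac16(7a-b-3)$ when $f_2\ne 0$, giving part (1), and $\beta_{(X,D)}(E)\le\tfrac16(a-b-3)<0$ throughout the domain when $f_2=0$, giving part (3).

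For part (2), after a coordinate change I may assume $f_2=x^2$, so $Q$ is tangent to the double line $\{x=0\}$ at $p$; the extra degeneration is detected by the weighted blow-up $E_2$ with upstairs weights $(3,1)$ on $(x,y)$, chosen so that $x^2$ and the $y^6$-part of $f_6$ acquire equal weight. I compute $A_X(E_2)=\tfrac{3+1}{4}=1$, $E_2^2=-\tfrac43$, and $\ord_{E_2}(Q)=\tfrac14\min(2\cdot 3,\,6,\,10)=\tfrac32$ uniformly in $f_6,f_{10}$ (the monomial $x^2$ alone already contributes weight $6$), while $\ord_{E_2}(L)\ge\tfrac12$ for every $L$. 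The volume now needs a two-chamber Zariski decomposition: $\pi^\ast\mathcal{O}(s)-tE_2$ is nef only for $t\le s/4$, and for $s/4\le t\le 3s/4$ its negative part is $c\,\tilde H_x$ with $c=2t-\tfrac s2$, so that $\vol=\tfrac{3s^2}{8}+\tfrac23 t^2-st$ there. Integrating both pieces yields $S_{(X,D)}(E_2)=s/3=2-\tfrac{10}{3}a-\tfrac23 b$, whence $\beta_{(X,D)}(E_2)\le\tfrac16(11a+b-6)$, and K-semistability forces $11a+b\ge 6$.

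The main obstacle is the volume and threshold bookkeeping rather than the choice of divisors: one must correctly pin down the orbifold self-intersections $E^2=-4$ and $E_2^2=-\tfrac43$ and the discrepancies $A_X(E)=\tfrac12$, $A_X(E_2)=1$ through the degree-$4$ cover, locate the nef threshold $t=s/4$ in each case, and carry out the Zariski decomposition on $[s/4,3s/4]$ for $E_2$. One must also check that the order-of-vanishing computations for $Q$ and $L$ along $E$ and $E_2$ are uniform over all admissible $f_6,f_{10},g_2$ in each stratum. Once these are in place the three assertions drop out of the single identities $\beta_{(X,D)}(E)=\tfrac16(7a-b-3)$ and $\beta_{(X,D)}(E_2)=\tfrac16(11a+b-6)$, together with the harmless inequality coming from $\ord_{E_2}(L)\ge\tfrac12$.
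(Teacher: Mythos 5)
Your proposal is correct and follows essentially the same route as the paper: for parts (1) and (3) you use exactly the paper's divisor (the exceptional divisor of the weight-$(1,1)$ blow-up of the $\frac{1}{4}(1,1)$ point, with $A=\frac12$ and $S=1-\frac53a-\frac13b$), and for part (2) you use the same weight-$(3,1)$ monomial valuation at that point as the paper, the only difference being that you derive $S$ by an explicit two-chamber Zariski decomposition, whereas the paper works with the unrescaled pushforward $v=\pi_*\tilde v$ and simply records $A(v)=4$, $v(Q)\ge 6$, $v(L)\ge 2$, $S(v)=8\bigl(1-\tfrac53a-\tfrac13b\bigr)$.

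One caveat on part (2): your blanket rule ``divide every upstairs quantity by $4$'' is not the prime-divisor normalization for the $(3,1)$ ray. In the lattice $N=\mathbb{Z}^2+\mathbb{Z}\cdot\frac14(1,1)$ the vector $3v_0+v_1$ is \emph{twice} a primitive vector, i.e.\ the quotient map has ramification index $2$ (not $4$) along this exceptional divisor, so the genuine prime divisor $E_2$ over $X$ has $A_X(E_2)=2$, $\ord_{E_2}(Q)=3$, $E_2^2=-\tfrac13$ and $S(\ord_{E_2})=\tfrac{2s}{3}$, exactly twice your values. What you actually compute are the invariants of the rescaled valuation $\tfrac14\pi_*\tilde v=\tfrac12\ord_{E_2}$, and your computation is internally consistent for that valuation (your piecewise volume function is precisely $\vol\bigl(\mathcal{O}(s);\tfrac14\pi_*\tilde v\ge t\bigr)$, which is why the integral correctly returns $S=s/3$). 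Since $A$, $\ord$ and $S$ are all homogeneous of degree one in the valuation, $\beta$ is merely scaled by $\tfrac12$ and the wall $11a+b\ge 6$ comes out unchanged; so this is a labelling imprecision rather than a gap, but the assertions ``$A_X(E_2)=1$ for the prime divisor $E_2$'' and ``ramification index $4$'' should not be copied as stated.
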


\begin{proof}
    Let's take weighted blow up at $[0:0:1]$ with weight $(1,1,0)$ along the coordinate $[x:y:z]$:
    \[
    \pi:Y\rightarrow\mathbb{P}(1,1,4).
    \]
    Denote its exceptional divisor by $E$. Then we can compute that:
    \begin{align}
        A_{(X,D)}(E)&=\dfrac{1}{2}-a\cdot\coeff_{E}(\pi^*Q)-b\cdot\coeff_{E}(\pi^*L),\\
        S_{(X,D)}(E)&=1-\frac{5}{3}a-\frac{1}{3}b.
    \end{align}
    By the equation of $Q^{(1)}$ and $L^{(1)}$, we note that they are both passing through the point $[0:0:1]$, thus $\coeff_{E}(\pi^*Q)\ge\frac{1}{2}$ and $\coeff_{E}(\pi^*L)\ge\frac{1}{2}$. If $(X,D)$ is K-semistable, then we have
    \[
    1-\frac{5}{3}a-\frac{1}{3}b=S_{(X,D)}(E)\le A_{(X,D)}(E)\le\dfrac{1}{2}(1-a-b)
    \]
    which implies that $7a-b\ge3$. 

    Moreover, if the equation of $Q^{(1)}$ is $\left\{zf_{6}(x,y)+f_{10}(x,y)=0\right\}$, then $\coeff_{E}(\pi^*Q^{(1)})\ge6$, by the same arguments, we deduce that $a-b\ge3$, which has empty intersection with the origin $\left\{5a+b\le3,a\ge0,b\ge0\right\}$. Thus in this case $(X,D)$ is K-unstable. Finally, let's consider the case that the equation of $Q^{(1)}$ is 
    \[
    \left\{z^2x^2+zf_6(x,y)+f_{10}(x,y)=0\right\}.
    \]
    Let $\pi:\mathbb{A}^{2}_{(x,y)}\rightarrow X$ be the cyclic quotient map defined by $\pi(x,y)=[x,y,1]$. Let $v:\pi_{*}\Tilde{v}$ where $\Tilde{v}$ is the monomial valuation on $\mathbb{A}^{2}_{(x,y)}$ of weight $(3,1)$. Note that $v(Q^{(1)})\ge6$ and $v(L^{(1)})\ge2$. By computation we show that $S(v)=8(1-\frac{5}{3}a-\frac{1}{3}b)$. If $(X,D)$ is K-semistable, by valuative criterion we have:
    \begin{equation}
        4-6a-2b\ge8(1-\frac{5}{3}a-\frac{1}{3}b)
    \end{equation}
    which implies $11a+b\ge6$.
    
\end{proof}

\begin{thm}\label{4.8}
    The walls and the corresponding critical pairs associated to $\mathbb{P}(1,1,4)$ are listed in the following table:
    \begin{center}
    \renewcommand*{\arraystretch}{1.2}
    \begin{table}[ht]
    \centering
      \begin{tabular}{ |c |c |c|}
    \hline
     wall &  curve $Q^{(1)}$ on $\mathbb{P}(1,1,4)$ & curve $L^{(1)}$ on $\mathbb{P}(1,1,4)$  \\  \hline 

     $7a-b=3$   &  $\left\{z^2xy=0\right\}$   & $\left\{xy=0\right\}$ or $\left\{l_1l_2=0\right\}$\\ \hline 
     $35a-17b=15$  &  $\left\{z^2xy+y^{10}=0\right\}$   & $\left\{x^2=0\right\}$\\ \hline 
     $7a-4b=3$ &     $\left\{z^2xy+\lambda zx^5y=0\right\}$   & $\left\{y^2=0\right\}$ \\ \hline 
     $11a+b=6$ &     $\left\{x^2z^2+y^6z=0\right\}$   & $\left\{y^2=0\right\}$ \\ \hline
     $11a-2b=6$ &     $\left\{x^2z^2+y^6z=0\right\}$   & $\left\{xy=0\right\}$ \\ \hline
     $11a-5b=6$ &     $\left\{x^2z^2+y^6z=0\right\}$   & $\left\{x^2=0\right\}$ \\ \hline
     \end{tabular}
     \caption{ K-moduli walls from $\mathbb{P}(1,1,4)$}
     \label{Kwall3}
\end{table}
\end{center}
where $l_1$ and $l_2$ are both degree 1 form of $x$, $y$, such that $l_i\neq x$ or $y$.
\end{thm}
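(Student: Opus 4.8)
The plan is to mirror the proof of the $\mathbb{P}^2$ case. By Theorem~\ref{critical}, every critical pair supported on $\mathbb{P}(1,1,4)$ carries a positive-dimensional torus, so it suffices to enumerate pairs $(\mathbb{P}(1,1,4);Q^{(1)},L^{(1)})$ invariant under either the full maximal torus $\mathbb{T}\cong(\mathbb{C}^*)^2$ (the toric, complexity-zero case) or a single one-parameter subgroup $\lambda$ acting by $t\cdot[x:y:z]=[t^{m_1}x:t^{m_2}y:t^{m_3}z]$ (the strictly complexity-one case). I first record the numerical data feeding every valuative computation: $-K_X\sim\mathcal{O}(6)$ and $\mathcal{O}(1)^2=\tfrac14$, whence $Q^{(1)}\sim-\tfrac53 K_X$, $L^{(1)}\sim-\tfrac13 K_X$, and $\vol(-K_X-D)=9\bigl(1-\tfrac53 a-\tfrac13 b\bigr)^2$.

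For the toric case I take $Q^{(1)}$ and $L^{(1)}$ to be supported on the coordinate divisors $\{x=0\}$, $\{y=0\}$, $\{z=0\}$, and enumerate the monomials of $\mathcal{O}(1)$-degree $10$ and $2$ respectively. These coordinate curves are horizontal, so Theorem~\ref{complexity1} forces $\beta=0$ on each of them; solving these equations (up to the $x\leftrightarrow y$ symmetry) pins down the toric configuration $Q^{(1)}=\{z^2xy=0\}$, $L^{(1)}=\{xy=0\}$ on the wall $7a-b=3$, matching the first row of the table. The alternative representative $L^{(1)}=\{l_1l_2=0\}$ on the same wall appears as a strictly complexity-one limit with two general lines, so I record it when treating that degeneration below.

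In the strictly complexity-one case I run the weight vector $(m_1,m_2,m_3)$ over its finitely many admissible normalizations. For each, the $\lambda$-invariance of $Q^{(1)}$ forces it to be a binomial whose two monomials share the same $\lambda$-weight (e.g.\ $z^2xy+y^{10}$, $z^2xy+zx^5y$, $x^2z^2+y^6z$), and $L^{(1)}$ to be an invariant member of $|\mathcal{O}(2)|$. I then compute $Fut_{(X,D)}(\lambda)=\beta_{(X,D)}(E)$, where $E$ is the exceptional divisor of the weighted blow-up induced by $\lambda$, exactly as in the $\mathbb{P}^2$ argument; setting it to zero yields the wall. The $A$- and $S$-invariants here are computed via weighted blow-ups and pushed-forward monomial valuations $\pi_*\tilde v$ through the $\tfrac14(1,1)$-singularity at $[0:0:1]$, precisely as in Theorem~\ref{whenp114appear}. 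To keep the enumeration finite I reuse the Steps~0--4 algorithm of the $\mathbb{P}^2$ proof, and I use the necessary inequalities $7a-b\ge3$ and $11a+b\ge6$ of Theorem~\ref{whenp114appear} to discard any wall falling outside the log Fano domain. The surviving data, after imposing the vertical-divisor conditions, assemble into the remaining rows of the table.

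The hard part is twofold. First, the $S$-invariant computation is genuinely more delicate than on $\mathbb{P}^2$: the volume $\vol(-\pi^*(K_X+D)-tE)$ must be evaluated on a resolution of the $\tfrac14(1,1)$ quotient singularity, so I must carry out the correct Zariski decomposition on the blow-up and track the fractional discrepancies (which is what produces the coefficients $\coeff_E(\pi^*Q)$ and the factor $\tfrac12$ appearing in Theorem~\ref{whenp114appear}). Second, the polystability criterion of Theorem~\ref{complexity1} demands $\beta>0$ for every vertical invariant divisor, of which there are infinitely many; I therefore reduce this to a finite check by testing only the extremal vertical curves (the coordinate curves and the strict transforms through the torus-fixed points) and propagating positivity using the piecewise-polynomial, convex behaviour of $t\mapsto S(E)$. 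Showing that exactly these finitely many inequalities cut out the listed walls, and that no invariant configuration has been overlooked, is the crux of the argument.
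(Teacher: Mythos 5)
Your proposal follows essentially the same route as the paper's own proof: reduce to torus-invariant pairs via Theorem \ref{critical}, settle the toric case by imposing $\beta=0$ on the horizontal coordinate divisors (yielding $Q^{(1)}=\{z^2xy=0\}$, $L^{(1)}=\{xy=0\}$ on $7a-b=3$), then in the strictly complexity-one case enumerate the weight vectors, identify the semi-invariant curves, set $Fut(\lambda)=\beta(E)=0$ for the weighted blow-up induced by $\lambda$, discard fake walls with the vertical-divisor inequalities, and use the necessary conditions $7a-b\ge3$, $11a+b\ge6$ from Theorem \ref{whenp114appear} — which is exactly the paper's Case 1/Case 2.1--2.3 structure. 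Your numerical normalizations ($-K_X\sim\mathcal{O}(6)$, $\mathcal{O}(1)^2=\tfrac14$, $\vol=9(1-\tfrac53a-\tfrac13b)^2$) are also correct, so the proposal is sound and matches the paper's argument.
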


\begin{proof}
    \emph{Case 1: Toric Case.} 
    
    Firstly, we assume $(\mathbb{P}(1,1,4),aQ^{(1)}+bL^{(1)})$ is a toric pair. After a suitable coordinate change, the 2 dimensional torus $\mathbb{T}\cong\mathbb{C}^*\times\mathbb{C}^*$ act on $\mathbb{P}(1,1,4)$ is can be written by:
    \[
    (s,t)\circ[x:y:z]=[s^{\mu_1}t^{\lambda_1}x:s^{\mu_2}t^{\lambda_2}y:s^{\mu_3}t^{\lambda_3}z],\quad\forall(s,t)\in\mathbb{T}.
    \]
    It is easy to see that $H_{x}$, $H_{y}$, $H_z$ are only $\mathbb{T}$-invariant divisor on $\mathbb{P}(1,1,4)$. In particular, they are both horizontal divisors. We can compute that 
    \[
    S(H_{x})=S(H_{y})=2(1-\dfrac{5}{3}a-\dfrac{1}{3}b),\quad S(H_z)=\frac{1}{2}(1-\dfrac{5}{3}a-\dfrac{1}{3}b).
    \]
    If $(\mathbb{P}(1,1,4),aQ^{(1)}+bL^{(1)})$ is K -polystable, by theorem \ref{complexity1} we have
    \begin{align}
        A(H_{x})=1-a\ord_{H_x}(Q^{(1)})-b\ord_{H_x}(L^{(1)})=2(1-\frac{5}{3}a-\frac{1}{3}b)\label{w2'}\\
        A(H_{z})=1-a\ord_{H_z}(Q^{(1)})-b\ord_{H_z}(L^{(1)})=\frac{1}{2}(1-\frac{5}{3}a-\frac{1}{3}b)\label{w2}
    \end{align}

    By (\ref{w2}), we get $(\ord_{H_z}(Q^{(1)})-\frac{5}{6})a+(\ord_{H_{z}}(L^{(1)})-\frac{1}{6})b=\frac{1}{2}$. Since $\ord_{H_z}(L^{(1)})=0$, we know that $\ord_{H_z}(Q^{(1)})\ge1$. If $\ord_{H_z}(Q^{(1)})=1$, this implies $a-b=3$ which is contradict to $5a+b\le3$.
    If $\ord_{H_z}(Q^{(1)})=2$, we have $7a-b=3$. By (\ref{w2'}), the corresponding critical pair is $(\mathbb{P}(1,1,4),a\cdot\left\{xyz^2=0\right\}+b\cdot\left\{xy=0\right\})$. 

    \emph{Case 2: Complexity One Case.} 
    
    From now on, let's consider $(\mathbb{P}(1,1,4),aQ^{(1)}+bL^{(1)})$ as strictly complexity one pairs, that is, the dimension of the maximal torus is $1$. Assume the $\mathbb{C}^*$ action on $\mathbb{P}(1,1,4)$ is defined by $t\circ[x:y:z]=[t^{m_1}x:t^{m_2}y:t^{m_3}z]$, $\forall t\in\mathbb{C}^*$.

    \emph{Case 2.1} If $4m_2\neq 4m_1=m_3$, we can rewrite the torus action by 
    \[
    t\circ[x:y:z]=[x:t^my:z]\]
    where $m=m_2-m_1$. In this case, $H_y=\left\{y=0\right\}$ is a horizontal divisor. The vertical divisors are $H_{x}$, $H_{z}$ and $C_{\lambda}:=\left\{z+\lambda\cdot x^4=0|\lambda\in\mathbb{C}\right\}$.
    If $(\mathbb{P}(1,1,4),aQ^{(1)}+bL^{(1)})$ is K-polystable, by theorem \ref{complexity1}, we get:
    \begin{equation}\label{4.13}
    \left\{
    \begin{array}{l}
        (\frac{10}{3}-\ord_{H_y}(Q^{(1)}))a+(\frac{2}{3}-\ord_{H_y}(L^{(1)}))b=1\\
        (\frac{10}{3}-\ord_{H_x}(Q^{(1)}))a+(\frac{2}{3}-\ord_{H_x}(L^{(1)}))b>1\\
        \frac{1}{2}>(\ord_{C}(Q^{(1)})-\frac{5}{6})a-\frac{1}{6}b
    \end{array} 
    \right.
    \end{equation}
    \begin{itemize}
        \item If $L^{(1)}=H_{x}+H_{y}$, then $\ord_{H_y}(L^{(1)})=\ord_{H_x}(L^{(1)})=1$. By above constraints, we deduce that $\ord_{H_y}(Q^{(1)})\le2$ and $\ord_{H_x}(Q^{(1)})\le2$.
        \begin{itemize}
            \item  If $\ord_{H_x}(Q^{(1)})=2$, by (\ref{4.13}) we deduce that $4a-b>3$ which is contradict to the original constraints $5a+b\le3$ and $a\ge0$, $b\ge0$.
            \item If $\ord_{H_x}(Q^{(1)})=0$, by (\ref{4.13}) we deduce that $10a-b=3$ and $\ord_{H_y}(Q^{(1)})=1$. But this is a contradiction since we have shown in theorem\ref{whenp114appear} that the $\mathbb{P}(1,1,4)$-pair appears in the K-moduli space only if $7a-b\ge3$.
 
            \item If $\ord_{H_x}(Q^{(1)})=1$, by similarly method we deduce that $\ord_{H_y}(Q^{(1)})=1$. But these two constraints are contradict to each other. So this case is excluded.       \end{itemize}
        \item If $L^{(1)}=2H_{x}$, then $\ord_{H_y}(L^{(1)})=0$, $\ord_{H_x}(L^{(1)})=2$. The above constraints can be rewritten as
    
    \begin{equation}\label{1}
        (\frac{10}{3}-\ord_{H_y}(Q^{(1)}))a+\frac{2}{3}b=1
    \end{equation} 
    
    \begin{equation}\label{2}
        (\frac{10}{3}-\ord_{H_x}(Q^{(1)}))a-\frac{4}{3}b>1
    \end{equation}
        \begin{itemize}
            \item If $\ord_{H_y}(Q^{(1)})=4$, by (\ref{1}), we deduce that $2b-2a=3$. But we need $7a-b\ge3$, so this case is excluded. 
            \item If $\ord_{H_y}(Q^{(1)})\le3$, the same arguments as above also deduce contradictions.
        \end{itemize}

        \item If $L^{(1)}=2H_{y}$. By the same arguments as above, we conclude that the only possible case is $Q^{(1)}=\left\{xyz(z+x^4)=0\right\}$,  $L^{(1)}=\left\{y^2=0\right\}$ and the corresponding critical line is $7a-4b=3$.
    \end{itemize}

    \emph{Case 2.2} If $m_1=m_2\neq 4m_3$. The torus action can be written as 
    \[
    t\circ[x:y:z]=[x:y:t^mz].
    \] 
    Note that in this case, any degree 1 curves on $\mathbb{P}(1,1,4)$ are vertical, and there is only one horizontal divisor $H_z=0$. So $(\mathbb{P}(1,1,4),aQ^{(1)}+bL^{(1)})$ is K-polystable implies that 
    \[
    \frac{1}{2}=(\ord_{H_{z}}(Q^{(1)})a-\frac{5}{6})-\frac{1}{6}b.
    \]
    Thus $\ord_{H_{z}}(Q^{(1)})=1$ or $2$. But $\ord_{H_{z}}(Q^{(1)})\neq1$, otherwise we will deduce that $b=a-3$, which is a contradiction. Thus $\ord_{H_z}(Q^{(1)})=2$. The equation of $Q^{(1)}$ can be assume to be $z^2q(x,y)=0$, where $q(x,y)$ is a quartic form of $x$, $y$. 
    
    If the rank of $q$ is 1, we can rewrite the equation of $Q$ by $z^2x^2=0$ after a suitable coordinate change. But the following constraint
    \[
    (\frac{10}{3}-\ord_{H_x}(Q^{(1)}))a+(\frac{2}{3}-\ord_{H_x}(L^{(1)}))b>1
    \]
    implies that $4a+2b>3$ which is contradicts to the condition $5a+b\le3$ and $7a-b=3$. 
    
    If the rank of $q$ is 2, we can rewrite the equation of $Q^{(1)}$ by $z^2xy=0$ after a suitable coordinate change. Assume the equation of $L^{(1)}$ is $\left\{l_1l_2=0\right\}$. If $Q^{(1)}$ and $L^{(1)}$ have common components, for example, we may assume that $l_1=x$. Then $l_2$ must equals to $y$. Otherwise, $H_x$ is a vertical divisor, and by the following constraints:
    \[
    (\frac{10}{3}-\ord_{H_x}(Q^{(1)}))a+(\frac{2}{3}-\ord_{H_x}(L^{(1)}))b>1
    \]
    we deduce that $7a-b>3$, which contradicts to the condition $7a-b=3$.
    
    Next, let's consider the case that $Q^{(1)}$ and $L^{(1)}$ has no common components. It is easy to check that all constraints from the vertical divisors and horizontal are compatible, thus on the critical line $7a-b=3$, we find the another kind of critical pairs: $(\mathbb{P}(1,1,4);Q^{(1)}=\left\{z^2xy=0\right\},L^{(1)}=\left\{l_1l_2=0\right\})$ where $l_1$ and $l_2$ are linear forms of $x,y$ and $l_i\neq x$ or $y$.

    \emph{Case 2.3} Finally, let's consider the remaining case that $m_1\neq m_2\neq4m_3$. Under suitable coordinate change, we can always assume that the torus action is
    \[
    t\circ[x:y:z]=[x:t^{n_1}y:t^{n_2}z],\quad \forall t\in\mathbb{C}^*,
    \]
    in which $n_1,n_2>0$, and $n_2\neq4n_1$.
    In this case, $H_x$, $H_y$ and $H_z$ are both vertical divisors. Moreover, there could be some irreducible invariant curve of degree greater than 1. We denote such curves by $C$.

    Let
    \[
    \pi:Y\rightarrow X=\mathbb{P}(1,1,4)
    \]
    be the weighted blow up at point $[1:0:0]$ with respect to the coordinate $(\frac{y}{x},\frac{z}{x^4})$ with weight $(n_1,n_2)$.
    Denote the exceptional divisor by $E$. Then we have:
    \begin{align}
        A(E)=&n_1+n_2-a\ord_{E}(Q^{(1)})-b\ord_{E}(L^{(1)})\\
        S(E)=&(\frac{1}{2}n_2+2n_1)(1-\frac{5}{3}a-\frac{1}{3}b).
    \end{align}
    and $Fut(\lambda)=\beta(E)=A(E)-S(E)$.

    If $(\mathbb{P}(1,1,4),aQ^{(1)}+bL^{(1)})$ is K- polystable, by theorem\ref{complexity1}, we deduce that
    \begin{equation}\label{important2}
    n_1-\frac{1}{2}n_2=(\frac{5}{6}n_2+\frac{10}{3}n_1-\ord_{E}(Q^{(1)})a+(\frac{1}{6}n_2+\frac{2}{3}n_1-\ord_{E}(L^{(1)}))b
    \end{equation}
    Let $Q$ be the irreducible component with highest degree of $Q^{(1)}$. 
    
    \begin{itemize}
        \item If $\deg Q=10$, then $Q^{(1)}=Q=\left\{zx^6+y^{10}=0\right\}$, and $\ord_{E}(Q)=10n_1$. By (\ref{important2}), we know that
        \[
        -4n_1=\frac{5}{3}n_1a+(\frac{7}{3}n_1-\ord_{E}(L^{(1)}))b.
        \]
        But $\ord_{E}(L^{(1)})\le2n_1$, so the above inequality is impossible!

        \item If $\deg(Q)=8$, $7$ or $5$, it is easy to exclude these cases by using the same arguments as above.

        \item If $\deg(Q)=9$, then the equation of $Q$ can be written as $\left\{z^2x^2+y^9=0\right\}$. 
        
        If $Q=\left\{z^2x^2+y^9=0\right\}$, then $2n_2=9n_1$ and $\ord_{E}(Q^{(1)})=10n_1$.
        \begin{itemize}
            \item If $Q^{(1)}=Q\cup H_x$, then $\ord_{E}(Q^{(1)})=2n_2$.
            \begin{itemize}
                \item If $L=2H_x$, then (\ref{important2}) can be rewritten as $23a=17b+3$.
                But we also have 
                \[
                (\frac{10}{3}-\ord_{H_x}(Q^{(1)}))a+(\frac{2}{3}-\ord_{H_x}(L^{(1)}))b>1\Rightarrow\quad 7a-4b>3
                \]
                which is impossible.
                \item If $L^{(1)}=H_x+H_y$ or $L^{(1)}=2H_{y}$, by the same arguments as above we can conclude that these cases are both impossible.
            \end{itemize}
            \item If $Q^{(1)}=Q\cup H_y$, then $\ord_{E}(Q^{(1)})=10n_1$.
            \begin{itemize}
                \item If $L^{(1)}=2H_x$, then $\ord_{E}(L^{2})=0$, and the (\ref{important2}) equals to 
                \[
                35a-17b=15
                \]
                On the other hand, for vertical divisors:
                \begin{align}
                    (\frac{10}{3}-\ord_{H_x}(Q^{(1)}))a+(\frac{2}{3}-\ord_{H_x}(L^{(1)}))b>1\Leftrightarrow&\quad 10a-4b>3\\
                    (\frac{10}{3}-\ord_{H_y}(Q^{(1)}))a+(\frac{2}{3}-\ord_{H_y}(L^{(1)}))b>1\Leftrightarrow&\quad 7a+2b>3\\
                    \frac{1}{2}>(\ord_{H_z}(Q^{(1)}-\frac{5}{6}))a-\frac{1}{6}b\Leftrightarrow&\quad \frac{1}{2}>-\frac{5}{6}a-\frac{1}{6}b
                \end{align}
                All the above conditions are compatible, thus we conclude that $((\mathbb{P}(1,1,4),a\left\{(xz^2+y^9)y=0\right\}+b\left\{x^2=0\right\}$ is a critical pair and the corresponding critical line is $35a-17b=15$.
                \item If $L=H_x+H_y$ or $L=2H_y$, we can exclude these cases by using the same arguments as above.
            \end{itemize}
        \end{itemize}
        
        \item If $\deg(Q)=6$, we can assume that $Q^{(1)}=Q\cup C$ where $\deg(C)=4$. It is easy to check that $C$ can not be the union of lines, thus $C$ is an irreducible degree 4 curve. Since it is  torus invariant divisor, we conclude that $C=\left\{z=0\right\}$, thus $Q^{(1)}=\left\{z(zx^2+y^6)=0\right\}$. Then $\ord_{E}(Q)=2n_2=12n_1$, $\ord_{H_x}(Q^{(1)})=\ord_{H_y}(Q^{(1)})=0$. Let $m=\dfrac{\ord_{E}(L^{(1)})}{n_1}$. It is easy to see that $m\in\left\{1,2,3\right\}$. By (\ref{important2}), we conclude that there are critical pairs as follows:
        \begin{align*}
           m=0&\quad\text{Which implies that $L^{(1)}=2H_x$ and $11a-5b=6$} \\
           m=1&\quad\text{Which implies that $L^{(1)}=H_x+H_y$ and $11a-2b=6$}\\
           m=2&\quad\text{Which implies that $L^{(1)}=2H_y$ and $11a+b=6$}
        \end{align*}
        For above pairs, the conditions on vertical divisors are easy to check.
        The proof is finished.
        
    \end{itemize}
    
\end{proof}

\subsection{The critical $X_{26}$-pair}

 Recall that $X=X_{26}$ is the surface which can be embedded in weighted projective space $\mathbb{P}(1,2,13,25)$. Let $k[x,y,z,w]$ be the homogeneous coordinate ring of $\mathbb{P}(1,2,13,25)$, then $X_{26}$ is defined by equation 
 \[
 \left\{xw=z^2+ty^{13}\right\}
 \]
 It can be seen as the partial smoothing of the $\frac{1}{4}(1,1)$-singularity on $\mathbb{P}(1,4,25)$. 
 Thus the only singularity of $X_{26}$ is of type $\frac{1}{25}(1,4)$.
 It easy to see that $K_{X_{26}}\sim\mathcal{O}_{\mathbb{P}(1,2,13,25)}(-15)|_{X}$. In this case $Q^{(2)}\sim\mathcal{O}_{25}$ is defined by $\left\{\lambda\cdot w^2+f(x,y)z+g(x,y)=0\right\}$ and $L^{(2)}\sim\mathcal{O}_{X}(5)$ is defined by $\left\{\lambda_1x^5+\lambda_2x^3y+\lambda_3xy^2=0\right\}$.

 Firstly, we need to analyze the feature of the curves $C^{(4)}$ on $K$-semistable log Fano pairs $(X_{26},aQ^{(2)}+bL^{(2)})$:

 \begin{thm}\label{whenX26appear}
   If $(X_{26},aQ^{(2)}+bL^{(2)})$ is K-semistable, then $Q^{(2)}$ is not passing through the singular point of $X_{26}$. Furthermore, the coefficients $(a,b)$ should satisfies the condition $b+15a\ge8$.
 \end{thm}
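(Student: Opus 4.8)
The plan is to prove both assertions at once, using a single divisorial valuation over the singular point together with the valuative criterion (Fujita--Li--BLX). First I would localize at the singularity. The unique singular point of $X_{26}=\{xw=z^2+ty^{13}\}$ is $P=[0:0:0:1]$, of type $\tfrac{1}{25}(1,4)$. Dehomogenizing $w=1$ and eliminating $x=z^2+ty^{13}$, an \'etale-local model of $X_{26}$ at $P$ is the cyclic quotient $\mathbb{A}^2_{(y,z)}/\mu_{25}$, where $\mu_{25}$ acts with weights $(2,13)$ on $(y,z)$. In this chart $Q^{(2)}=\{\lambda w+zf(x,y)+g(x,y)=0\}$ becomes $\lambda+(\text{higher order in }y,z)$, so $Q^{(2)}$ passes through $P$ exactly when $\lambda=0$; this is the dichotomy I want to exploit.

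Next I would introduce the monomial valuation $v$ on the orbifold cover of weight $(2,13)$ (the orbifold weights themselves) and its induced valuation over $X_{26}$; following the normalization used in Theorem~\ref{whenp114appear} I take $A_{(X,D)}(v)=(2+13)-a\,v(Q^{(2)})-b\,v(L^{(2)})$. Since $v(x)=v(z^2+ty^{13})=\min(2\cdot 13,13\cdot 2)=26$, a direct monomial computation gives $v(L^{(2)})\in\{30,80,130\}$ according as the lowest-weight term of $L^{(2)}$ is $xy^2$, $x^3y$ or $x^5$ (hence $30\le v(L^{(2)})\le 130$ always), while $v(Q^{(2)})=0$ if $\lambda\neq0$ and $v(Q^{(2)})=25$ (from the $z\cdot y^6$ term) if $\lambda=0$.

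The technical heart is the computation $S_{(X,D)}(v)=135\,(1-\tfrac53a-\tfrac13b)$. Because $Q^{(2)},L^{(2)}$ and $-K_X$ are all proportional to $\mathcal{O}_X(1)$, one has $-K_X-D=(1-\tfrac53a-\tfrac13b)(-K_X)$, so $S_{(X,D)}(v)=(1-\tfrac53a-\tfrac13b)\,S_{-K_X}(v)$ and it suffices to compute the single number $S_{-K_X}(v)=\tfrac19\int_0^{\infty}\vol(-\pi^{*}K_X-sE)\,ds=135$, where $\pi:Y\to X_{26}$ is the weighted blow-up at $P$ with weights $(2,13)$ and $E$ its exceptional divisor. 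Granting this, I would expand $\beta_{(X,D)}(v)=A_{(X,D)}(v)-S_{(X,D)}(v)$. When $\lambda\neq0$ this reads $\beta_{(X,D)}(v)=-120+225a+(45-v(L^{(2)}))b$, and $\beta_{(X,D)}(v)\ge0$ gives $15a+b\ge8$, $45a-7b\ge24$ or $45a-17b\ge24$ according to $v(L^{(2)})$, each of which forces $b+15a\ge8$ for $b\ge0$; this proves the second assertion and recovers the three $X_{26}$-walls of Table~\ref{Kwall2} as a consistency check. When $\lambda=0$ the extra vanishing $v(Q^{(2)})=25$ gives $\beta_{(X,D)}(v)=-120+200a+(45-v(L^{(2)}))b$; here the affine function $200a+(45-v(L^{(2)}))b$ stays $<120$ on the open log Fano domain $\{5a+b<3,\ a,b\ge0\}$ (its value $120$ is attained only at the boundary point $(a,b)=(\tfrac35,0)$), so $\beta_{(X,D)}(v)<0$ there and the pair is K-unstable --- a contradiction. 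Hence $\lambda\neq0$ and $Q^{(2)}$ avoids $P$, giving the first assertion.

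The main obstacle is precisely the evaluation $S_{-K_X}(v)=135$: one must compute the intersection numbers $E^2$ and $(\pi^{*}K_X)\cdot E$ correctly through the quotient singularity $\tfrac1{25}(1,4)$, determine the Zariski decomposition of $-\pi^{*}K_X-sE$ as $s$ grows (locating the threshold at which the strict transform of a coordinate curve enters the negative part), and integrate the resulting piecewise-quadratic volume. Everything else is the elementary rearrangement above, so the entire content of the statement is concentrated in that one $S$-invariant on the singular surface.
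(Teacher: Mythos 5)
Your proposal is correct and is essentially the paper's own argument: the paper works with exactly the same $(2,13)$-monomial valuation over the $\tfrac{1}{25}(2,13)$ point, only normalized as $\ord_E=\tfrac{1}{25}v$ (so its values $A=\tfrac35$ and $S=\tfrac{27}{5}\bigl(1-\tfrac53a-\tfrac13b\bigr)$ are your $15$ and $135\bigl(1-\tfrac53a-\tfrac13b\bigr)$ divided by $25$), and it applies the Fujita--Li valuative criterion in the two cases $\lambda=0$ and $\lambda\neq0$ just as you do, so your computation even repairs the paper's slips (its bound $\ord_E(L^{(2)})\ge\tfrac{26}{5}$ holds only for $L^{(2)}=\{x^5=0\}$, whereas your correct minimum $v(L^{(2)})\ge 30$ still yields the contradiction). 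One bookkeeping remark: with $E$ the honest exceptional divisor of the weighted blow-up, your bridge formula should read $S_{-K_X}(v)=25\cdot\tfrac19\int_0^\infty\vol(-\pi^*K_X-sE)\,ds$ since $v=25\,\ord_E$ (the integral itself equals $\tfrac{27}{5}$), though the value $135$ you actually feed into $\beta$ is the correct one and is consistently scaled with $A(v)=15$ and $v(x)=26$, so all conclusions stand.
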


\begin{proof}
    We consider question in the affine open subset $\left\{w=1\right\}$. Then we have the cyclic quotient map $\pi:\mathbb{A}^{2}_{(y,z)}\rightarrow X_{26}$ definde by $\pi(y,z)=[y^{13}+z^{2},y,z,1]$. Let $F$ be the exceptional divisor on $\mathbb{A}^{2}_{(y,z)}$ given by the $(2,13)$-weighted blow up. Let $E$ be quotient of $F$ over $X_{26}$. Then it is clear that $\ord_{E}=\frac{1}{25}\pi^*\ord_{F}$. Easy computation shows that
    \[
    S(\ord_E)=\frac{9}{25}(15-25a-5b),
    \]
    and the log discrepancy is 
    \[
    A(\ord_E)=\frac{3}{5}-a\cdot\ord_{E}(Q^{(2)})-b\cdot\ord_{E}(L^{(2)}).
    \]
     If $Q^{(2)}$ passing through the singular point of $X_{26}$, then its equation can be written as $\left\{f(x,y)z+g(x,y)=0\right\}$ which implies that $\ord_{E}(Q^{(2)})\ge1$ and $\ord_{E}(L^{(2)})\ge\ord_{E}(5\left\{x=0\right\})=\dfrac{26}{5}$.
     
     On the other hand, if $(X_{26},aQ^{(2)}+bL^{(2)})$ is K-semistable. Then by Fujita-Li's criterion this implies 
     \[
     A(\ord_E)\ge S(\ord_E)
     \]
     which is equivalent to the following condition:
     \begin{equation}\label{X26Kss}
     \frac{5(9-\ord_{E}(C))}{24}a+\frac{9-5\ord_{E}(L^{(2)})}{24}b\ge1
     \end{equation}
     But this is contradict to the assumption $\left\{5a+3b\le1\right\}$!
     So we prove the first statements of the theorem.

     For the second statement, we only need to notice that the equation of $L^{(2)}$ must contains $x$, thus $A(\ord_{E})\le\dfrac{3}{5}-\dfrac{6}{5}b$. Combine with the condition(\ref{X26Kss}), we can deduce that $b+15a\ge8$.
\end{proof}

Let's denote $Q^{(2)}_{0}=\left\{w=0\right\}$. We will show that the position of $Q^{(2)}$ on $K$-polystable pair $(X_{26},aQ^{(2)}+bL^{(2)})$ is fixed:

\begin{thm}
    If $(X_{26},aQ^{(2)}+bL^{(2)})$ is K-polystable, then $Q^{(2)}=Q^{(2)}_{0}$.
\end{thm}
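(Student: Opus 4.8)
The plan is to pin down $Q^{(2)}$ by exploiting the torus acting on $X_{26}$, together with the fact (established in Theorem \ref{whenX26appear}) that a K-semistable $Q^{(2)}$ cannot pass through the singular point. First I would compute the connected automorphism group of $X_{26}=\{xw=z^2+ty^{13}\}$ and isolate its maximal torus. Scaling the coordinates $(x,y,z,w)\mapsto(s_0x,s_1y,s_2z,s_3w)$ preserves the defining equation exactly when $s_0s_3=s_2^2=s_1^{13}$; after quotienting by the trivial scaling $(\mu,\mu^2,\mu^{13},\mu^{25})$ of the weighted projective space this leaves a one-dimensional torus $\mathbb{T}\cong\mathbb{G}_m$, which I may normalize to act by $\sigma_t\colon(x,y,z,w)\mapsto(tx,y,z,t^{-1}w)$. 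The upshot is that $x$ has $\mathbb{T}$-weight $+1$, both $y$ and $z$ have weight $0$, and $w$ has weight $-1$; this weight bookkeeping is what ultimately forces the shape of $Q^{(2)}$.

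Next I would argue that a K-polystable pair $(X_{26},aQ^{(2)}+bL^{(2)})$ is necessarily preserved by $\mathbb{T}$. Since $X_{26}$ occurs only along walls, such a pair is strictly K-polystable, so by Theorem \ref{critical} its automorphism group is reductive and of positive dimension; as $\mathbb{T}$ is the maximal torus of $\aut(X_{26})$, after conjugation $\mathbb{T}\subseteq\aut(X_{26};Q^{(2)},L^{(2)})$, i.e. $\mathbb{T}$ fixes both $Q^{(2)}$ and $L^{(2)}$. A more robust route, which I would use to make this step independent of any a priori torus on the pair, is to run the one-parameter degeneration $\sigma_{1/s}$ as $s\to0$: writing $Q^{(2)}=\{\lambda w+f(x,y)z+g(x,y)=0\}$ with $\deg f=12$ and $\deg g=25$, multiplying the transformed equation by $s$ shows that the flat limit of $\sigma_{1/s}\cdot Q^{(2)}$ is $\{w=0\}$ for every choice of $f,g$. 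This realizes $(X_{26},aQ^{(2)}+bL^{(2)})$ as specially degenerating to the $\mathbb{T}$-invariant pair $(X_{26},a\{w=0\}+bL_0)$; by uniqueness of the K-polystable degeneration inside an S-equivalence class, a K-polystable pair cannot admit a nontrivial such degeneration, so the degeneration is isotrivial and $Q^{(2)}$ lies in the $\aut(X_{26})$-orbit of $\{w=0\}$.

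With $\mathbb{T}$-invariance in hand the conclusion is immediate. The defining polynomial of a $\mathbb{T}$-invariant member of $|\mathcal{O}(25)|$ must be a $\mathbb{T}$-eigenvector, hence all of its monomials share a single weight. In $\lambda w+f(x,y)z+g(x,y)$ the term $w$ has weight $-1$, whereas every monomial of $f(x,y)z$ and of $g(x,y)$ has weight equal to its $x$-degree, which is $\ge0$. By Theorem \ref{whenX26appear} a K-semistable $Q^{(2)}$ does not pass through the singular point, so its equation must contain the $w$-term, i.e. $\lambda\neq0$; as this is the unique monomial of negative weight, the eigenvector condition forces $f=g=0$. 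Therefore $Q^{(2)}=\{w=0\}=Q^{(2)}_0$.

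I expect the main obstacle to be the middle step: cleanly establishing that the K-polystable pair carries the torus $\mathbb{T}$, equivalently that the degeneration above is isotrivial. The reductivity input only guarantees a reductive automorphism group, so one must rule out a K-stable $X_{26}$-pair with finite automorphisms sitting off the $\mathbb{T}$-fixed locus; the degeneration argument handles this, but it relies on separatedness/properness of the K-moduli space and on knowing that the limit $(X_{26},a\{w=0\}+bL_0)$ is itself K-semistable, which I would confirm along the relevant wall using the complexity-one criterion (Theorem \ref{complexity1}). The weight computation in the final step is routine once the normalization $\sigma_t$ and the exclusion of the singular point are in place.
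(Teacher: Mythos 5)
Your strategy coincides with the paper's: degenerate the pair to $(X_{26},aQ^{(2)}_0+bL^{(2)}_0)$ along the one-parameter subgroup $\mathbb{T}\subset\aut(X_{26})$ and use K-polystability to force this degeneration to be a product. Your torus computation is correct, and your weight-theoretic endgame (every monomial of $\lambda w+f(x,y)z+g(x,y)$ other than $w$ has non-negative $\mathbb{T}$-weight, and $\lambda\neq0$ by Theorem \ref{whenX26appear}, so a $\mathbb{T}$-eigenvector must be $\{w=0\}$) is a clean alternative to the paper's direct identification of the limit.

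The gap is the middle step, which is exactly the point the paper compresses into the unproved assertion ``Since $Fut=0$''. Your Route A rests on the premise that $X_{26}$-pairs occur only along walls, so that any K-polystable one is strictly polystable with positive-dimensional reductive automorphism group. That premise is false, and is contradicted by the paper itself: Theorem \ref{type2wallcrossing}(2) exhibits, for $15a+b>8$, a whole divisor $E^{+}_{a_2,b_2}$ of K-polystable pairs $(X_{26},a\{w=g\}+b\{xy^2=h\})$ with $g\neq0$; by your own weight argument such a pair cannot be torus-invariant, so these are K-stable $X_{26}$-pairs with finite automorphisms and $Q^{(2)}\neq Q^{(2)}_0$ sitting in open chambers. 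In particular, no argument can establish the statement for all $(a,b)$; it is true only for torus-invariant (equivalently strictly polystable) pairs, i.e.\ on the walls, which is all the paper actually needs to locate its critical pairs. Your Route B invokes the principle that a K-polystable pair admits no nontrivial special degeneration. That principle is wrong: K-polystable, even K-stable, pairs admit many nontrivial special degenerations; what is excluded is a nontrivial special test configuration with \emph{vanishing generalized Futaki invariant}, equivalently (for this product-type test configuration, whose $Fut$ equals the Futaki character of the central fiber, a quantity antisymmetric in the one-parameter subgroup) one whose central fiber is K-semistable. So your argument needs $Fut=0$ for this specific test configuration, which is precisely the claim the paper leaves unjustified. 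Computing it as in the paper's subsequent theorem ($Fut(\sigma)=\beta(E)$) gives, for instance when $L^{(2)}_0=\{xy^2=0\}$,
\[
Fut(\sigma)=9a+\tfrac{3}{5}b-\tfrac{24}{5},
\]
which vanishes only on the wall $15a+b=8$ and is strictly positive beyond it; off that wall the limit $(X_{26},a\{w=0\}+bL^{(2)}_0)$ is K-unstable (the inverse one-parameter subgroup has negative Futaki character), so there is no K-semistable limit for your S-equivalence/uniqueness argument to appeal to. You flag this difficulty yourself (``confirm along the relevant wall''), but that restriction appears nowhere in the statement you set out to prove, so the proposed proof does not close---and cannot, without restricting the statement to the walls or to strictly polystable pairs.
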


\begin{proof}
    Note that $(X_{26},aQ^{(2)}+bL^{(2)})$ admit a special degeneration to $(X_{26},aQ^{(2)}_{0}+bL^{(2)}_{0})$ via 1-PS defined by $t\circ[x,y,z,w]=[t^{26}x,t^2y,t^{13}z,w]$. Since $Fut=0$, we deduce that $(X_{26},aQ^{(2)}_{0}+bL^{(2)}_{0})$ is K-semistable. But $(X_{26},aQ^{(2)}+bL^{(2)})$ is K-polystable implies $(X_{26},aQ^{(2)}+bL^{(2)})\cong(X_{26},aQ^{(2)}_{0}+bL^{(2)}_{0})$.
\end{proof}

Finally, let's compute the K-stability threshold of $(X_{26},aQ^{(2)}_{0}+bL^{(2)}_{0})$. Since we have known that there exists a torus action on it, we can use the Theorem \ref{complexity1} to check the polystability of this pair. 

\begin{thm}
    The walls and the corresponding critical pairs associated to $X_{26}$ are listed in the following table:
    \begin{center}
    \renewcommand*{\arraystretch}{1.2}
    \begin{table}[ht]
    \centering
      \begin{tabular}{ |c |c |c|}
    \hline
     wall &  curve $Q^{(2)}_0$ on $X_{26}$ & curve $L^{(2)}_0$ on $X_{26}$  \\  \hline
      $45a-17b=24$   &  $\left\{w=0\right\}$   & $\left\{x^5=0\right\}$ 
      \\ \hline  
     $45a-7b=24$  &  $\left\{w=0\right\}$   & $\left\{x^3y=0\right\}$\\ \hline  
     $15a+b=8$ &     $\left\{w=0\right\}$   & $\left\{xy^2=0\right\}$ \\ \hline  

     \end{tabular}
     \caption{ K-moduli walls from  index $5$ del Pezzo  $X_{26}$}
     \label{Kwall4}
\end{table}
\end{center}
\end{thm}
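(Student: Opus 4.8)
The plan is to reduce to a short list of torus-invariant configurations and then extract each wall from a single Futaki computation. By the preceding theorem, on any K-polystable pair one has $Q^{(2)}=Q^{(2)}_0=\{w=0\}$, so only the admissible lines $L^{(2)}_0$ remain to be determined. Since $L^{(2)}\sim\mathcal{O}_X(5)$ and the only degree $5$ monomials in the weighted coordinates $x,y,z,w$ (of weights $1,2,13,25$) are $x^5$, $x^3y$, $xy^2$, we may write $L^{(2)}=\{x(\lambda_1x^4+\lambda_2x^2y+\lambda_3y^2)=0\}$. A critical pair must carry a positive-dimensional torus, and the maximal torus of $\aut(X_{26})$ is the one-parameter group $\lambda\colon t\circ[x:y:z:w]=[tx:y:z:t^{-1}w]$, which preserves $\{xw=z^2+y^{13}\}$ and fixes $Q^{(2)}_0$. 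For $L^{(2)}_0$ to be $\lambda$-invariant its quartic factor must be a single monomial, forcing $L^{(2)}_0\in\{x^5,\,x^3y,\,xy^2\}$; this produces exactly the three configurations in the table, and it remains only to pin down the critical slope in each case.

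Next I would observe that the complexity of the $\mathbb{T}$-action is one and that the fixed locus of $\lambda$ on $X_{26}$ is finite, consisting of the coordinate points $[1:0:0:0]$ and $[0:0:0:1]$ together with the finitely many points of $H_x\cap H_w$. Hence $X_{26}$ carries no horizontal $\mathbb{T}$-invariant divisor, so by Theorem \ref{complexity1} the K-polystability of $(X_{26},aQ^{(2)}_0+bL^{(2)}_0)$ forces $\mathrm{Fut}(\lambda)=0$. By \cite{Fujita} this equals $\beta(E)$, where $E$ is the divisor extracted by the $(2,13)$-weighted blow-up at the singular point $[0:0:0:1]$ — the same $E$ used in Theorem \ref{whenX26appear}, with $\ord_E=\tfrac1{25}\pi_*\ord_F$. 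From that computation $S(E)=\tfrac{9}{25}(15-25a-5b)=\tfrac{27}{5}-9a-\tfrac95b$ and $A(E)=\tfrac35-a\,\ord_E(Q^{(2)}_0)-b\,\ord_E(L^{(2)}_0)$. Since $Q^{(2)}_0=\{w=0\}$ misses the singular point, $\ord_E(Q^{(2)}_0)=0$; working in the chart $\{w=1\}$ where $x=z^2+y^{13}$ gives $\ord_E\{x=0\}=\tfrac{26}{25}$ and $\ord_E\{y=0\}=\tfrac{2}{25}$, whence $\ord_E(L^{(2)}_0)=\tfrac{26}{5},\ \tfrac{16}{5},\ \tfrac65$ for $L^{(2)}_0=x^5,\,x^3y,\,xy^2$ respectively. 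Imposing $\beta(E)=9a+\bigl(\tfrac95-\ord_E(L^{(2)}_0)\bigr)b-\tfrac{24}{5}=0$ then yields precisely the three walls $45a-17b=24$, $45a-7b=24$ and $15a+b=8$.

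Finally it remains to promote these necessary conditions to genuine K-polystability, i.e. to verify $\beta(F)>0$ for every vertical $\mathbb{T}$-invariant prime divisor $F$. Concretely I would run through the coordinate curves $H_x,H_y,H_z,H_w$ and any remaining invariant curve through the two fixed points, compute each $S(F)$ via the volume formula on the $\mathbb{Q}$-factorial surface $X_{26}$, and check $A(F)>S(F)$ along the relevant wall. I expect this verification to be the main obstacle: unlike the single Futaki identity, it demands honest volume and intersection computations on the singular weighted surface $X_{26}$, together with a careful enumeration of all invariant divisors, and one must confirm the inequalities on the entire segment of each wall lying in the log Fano domain $\{5a+b\le 3\}$. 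Once these checks are complete, Theorem \ref{complexity1} certifies that each listed pair is strictly K-polystable exactly along its wall, which establishes the table.
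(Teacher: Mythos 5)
Your proposal is correct and takes essentially the same route as the paper: reduce to $Q^{(2)}_0=\{w=0\}$ via the preceding degeneration theorem, enumerate the invariant lines $\{x^5=0\}$, $\{x^3y=0\}$, $\{xy^2=0\}$, note the absence of horizontal divisors, and obtain each wall from $Fut(\lambda)=\beta(E)=0$ where $E$ is the quotient of the $(2,13)$-weighted blow-up at the singular point, with $S(E)=\frac{27}{5}-9a-\frac{9}{5}b$ and $\ord_E(L^{(2)}_0)=\frac{26}{5},\ \frac{16}{5},\ \frac{6}{5}$, exactly matching the paper's computation. The vertical-divisor verification you flag as the ``main obstacle'' is not carried out in the paper either, so your attempt already contains everything the paper's own proof does.
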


\begin{proof}
    Note that there is no horizontal divisor under this action, so we only need to consider the Futaki character. By \cite{Fujita}, $Fut(\lambda)=\beta(E)=A(E)-S(E)$, here $S(E)=\frac{27}{5}-9a-\frac{9}{5}b$. The only things that affect the wall is the positions of line $L^{(2)}$. We compute it case by case:
    \begin{itemize}
        \item $L^{(2)}_{0}=\left\{x^5=0\right\}$. Then $A(E)=\frac{3}{5}-\frac{26}{5}b$. So $Fut=0\Rightarrow 45a-17b=24$,
        \item $L^{(2)}_{0}=\left\{x^3y=0\right\}$. Then $A(E)=\frac{3}{5}-\frac{16}{5}b$. So $Fut=0\Rightarrow 45a-7b=24$,
        \item $L^{(2)}_{0}=\left\{xy^2=0\right\}$. Then $A(E)=\frac{3}{5}-\frac{6}{5}b$. So $Fut=0\Rightarrow 15a+b=8$.
    \end{itemize}
    We finished the proof.
\end{proof}

\subsection{The critical $\mathbb{P}(1,4,25)$-pair}

In this case, $X\cong\mathbb{P}(1,4,25)$, $Q^{(3)}\sim\mathcal{O}(50)$, $L^{(3)}\sim\mathcal{O}(10)$. We set the $x$, $y$, $z$ are homogeneous coordinates on $\mathbb{P}(1,4,25)$ with weight $1$, $4$, $25$ respectively.
First of all, we show the following result:

\begin{thm}\label{whenP1425appear}
    If $(\mathbb{P}(1,4,25),aQ^{(3)}+bL^{(3)})$ is K- semistable, then the singular point $[0:0:1]$ of type $\dfrac{1}{25}(1,4)$ is not contained in $Q^{(3)}$. Moreover, in this case the coefficients $(a,b)$ satisfied the constraints $115a+11b\ge63$.
\end{thm}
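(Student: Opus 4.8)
The plan is to follow the strategy of Theorem \ref{whenX26appear}: construct an explicit divisorial valuation over each of the two singular points of $\mathbb{P}(1,4,25)$, compute its log discrepancy $A$ and its expected order $S$ against the polarization $-(K_X+D)$, and apply the Fujita--Li criterion $A_{(X,D)}(E)\ge S_{(X,D)}(E)$. First I record the shapes forced by the weights $\deg x=1,\deg y=4,\deg z=25$: a section of $\mathcal{O}(50)$ is $Q^{(3)}=\{\lambda z^2+zf_{25}(x,y)+g_{50}(x,y)=0\}$ and a section of $\mathcal{O}(10)$ is $L^{(3)}=\{x^2(\alpha x^8+\beta x^4y+\gamma y^2)=0\}$. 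Thus $Q^{(3)}$ passes through $[0:0:1]$ exactly when $\lambda=0$, while $L^{(3)}$ always contains $\{x=0\}$ to order two. The surface has two singularities, $[0:0:1]$ of type $\tfrac1{25}(1,4)$ and $[0:1:0]$ of type $\tfrac14(1,1)$, and the two assertions will come from valuations over these two points.

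For the first assertion I take the weighted blow-up $E$ over $[0:0:1]$ with weight $(1,4)$ on the $\mu_{25}$-cover of the chart $\{z\neq0\}$, exactly as in the $X_{26}$ case, so that $A_X(E)=\tfrac15$ and $S$ is linear in $30-50a-10b$ (computed below). If $[0:0:1]\in Q^{(3)}$ then $\lambda=0$, which forces $\ord_E(Q^{(3)})\ge 1$, whereas $\ord_E(L^{(3)})$ is pinned down by the monomials of $L^{(3)}$ (all of weight $10$). Substituting into $A_{(X,D)}(E)\ge S$ and clearing denominators yields $5a-2b\ge 9$, incompatible with the log Fano domain $5a+b\le 3$ (which gives $5a\le 3$). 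Hence $Q^{(3)}$ misses $[0:0:1]$.

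The crucial point for the second assertion is that the bound is \emph{not} visible from $[0:0:1]$: the same valuation there only produces the weaker inequality $10a-b\ge\tfrac92$. Instead I use the blow-up $E'$ of the milder point $[0:1:0]$ with weight $(1,1)$ on its $\mu_4$-cover. Since no monomial of $\mathcal{O}(50)$ or of $\mathcal{O}(10)$ is a pure power of $y$ (as $50/4$ and $10/4$ are not integers), \emph{every} admissible $Q^{(3)}$ and $L^{(3)}$ passes through $[0:1:0]$; inspecting the lowest-weight monomials in the chart $\{y\neq0\}$ gives $\ord_{E'}(Q^{(3)})\ge 2$ and $\ord_{E'}(L^{(3)})\ge 2$ for all such curves (in the cover normalization, with $A_X(E')=2$). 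Computing $S$ as before and rearranging $A_{(X,D)}(E')\ge S$ then produces exactly $115a+11b\ge 63$.

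The main obstacle is the bookkeeping for $S$, i.e.\ computing the volume function $t\mapsto\vol\bigl(-\pi^*(K_X+D)-tE\bigr)$ on each weighted blow-up. The cleanest route is the toric description of $\mathbb{P}(1,4,25)$: there $-(K_X+D)=(30-50a-10b)\,\mathcal{O}(1)$ has moment polytope the triangle with vertices $(0,0),(\mu/4,0),(0,\mu/25)$ where $\mu=30-50a-10b$, each blow-up corresponds to a primitive ray $w$ in the relevant singular cone, and $S(\ord_w)$ equals the centroid value $\langle\bar m,w\rangle$ minus $\min_{P}\langle\,\cdot\,,w\rangle$ over that polytope. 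This reduces every $S$ to elementary polytope geometry and is what makes the two final inequalities fall out with integer coefficients. The only non-routine conceptual step is recognizing that the sharp constant in $115a+11b=63$ is governed by the $\tfrac14(1,1)$ point $[0:1:0]$, which $Q^{(3)}$ is forced to contain, rather than by the worse $\tfrac1{25}(1,4)$ point controlling the first statement.
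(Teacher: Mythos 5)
Your proposal is correct and follows essentially the same route as the paper: the first assertion via the $(1,4)$-weighted blow-up over the $\tfrac{1}{25}(1,4)$ point $[0:0:1]$ (forcing $\ord_E(Q^{(3)})\ge 1$, $\ord_E(L^{(3)})=\tfrac{2}{5}$ and the contradiction $5a-2b\ge 9$), and the second via the valuation $\varphi_*\ord_0$ over the $\tfrac14(1,1)$ point $[0:1:0]$ (your $E'$), with $A=2$, $v(Q^{(3)})\ge 2$, $v(L^{(3)})\ge 2$, and $S(v)=\tfrac{52}{15}(3-5a-b)$, which rearranges to $115a+11b\ge 63$. The only difference is cosmetic: you compute $S$ by the toric barycenter formula on the moment triangle, whereas the paper states the volume computation directly; both give the same constants.
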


\begin{proof}
    Take the weighted blow up at the point $[0:0:1]$ with weight $(1,4)$ along the local coordinate $(x,y)$ respectively:
    \[
    \pi: Y\rightarrow X=\mathbb{P}(1,4,25).
    \]
    Denote the exceptional divisor by $E$. Then we have:
    \begin{align*}
        K_{Y}=&\pi^*K_{X}-\frac{4}{5}E;\\
        E^2=&-\frac{25}{4}.
    \end{align*}
    Thus 
    \begin{align*}
        A(E)=&\frac{1}{5}-a\cdot\ord_{E}(Q^{(3)})-b\cdot\ord_{E}(L^{(3)});\\
        S(E)=&\frac{4}{5}(1-\frac{5}{3}a-\frac{1}{3}b).
    \end{align*}
    If $(\mathbb{P}(1,4,25),aQ^{(3)}+bL^{(3)})$ is K- semistable, we obtain:
    \[
    \dfrac{20-15\ord_{E}(Q^{(3)})}{9}a+\dfrac{4-15\ord_{E}(L^{(3)})}{9}b\ge1.
    \]
     Note that $[0:0:1]\in L^{(3)}=\left\{c_1x^2y^2+c_2x^6y+c_3x^{10}=0\right\}$, which implies that $\ord_{E}(L^{(3)})=\dfrac{10}{25}\Rightarrow\dfrac{9}{4-15\ord_{E}(L^{(3})}<0.$
     If $[0:0:1]\in Q^{(3)}$, then the equation of $Q^{(3)}$ is $\left\{zf_{25}(x,y)+f_{50}(x,y)=0\right\}$. Thus $\ord_{E}(Q^{(3)})\ge1\Rightarrow\dfrac{9}{20-15\ord_{E}(Q^{(3)})}\ge\dfrac{9}{5}>\dfrac{3}{5}$. But this is contradicted with $5a+b\le3$, so $[0:0:1]\notin Q^{(3)}$.

     Moreover, consider the point $[0:1:0]$ corresponding to the $\dfrac{1}{4}(1,1)$ singularity in $\mathbb{P}(1,4,25)$. Let $\varphi:\mathbb{A}^2_{x,z}\rightarrow\mathbb{P}(1,4,25)$ be the cyclic quotient map over the open subset $\left\{y=1\right\}\subset\mathbb{P}(1,4,25)$. Set $v:=\varphi_{*}\ord_{0}$, then by computation we know that $A_{\mathbb{P}(1,4,25)}(v)=2$, and 
     \begin{equation*}
         S(v)=\dfrac{52}{15}(3-5a-b).
     \end{equation*}
     Since $(\mathbb{P}(1,4,25),aQ^{(3)}+bL^{(3)})$ is K-semistable, by valuative criterion we have 
     \[
     2-a\cdot v(Q^{(3)})-b\cdot v(L^{(3)})\ge\dfrac{52}{15}(3-5a-b).
     \]
     Since $Q^({3})$ is degree 50, we have $v(Q^({3}))\ge2$ and $v(L^{(3)})\ge2$ because the lowest degree terms of $Q^{(3)}$ and $L^{(3)}$ at $[0:1:0]$ are $x^2y12,xzy^6,z^2$ and $x^2$ respectively. So we deduce that $115a+11b\ge63$.
\end{proof}

From this we can write the equation of $Q^{(3)}$ by $\left\{z^2+zf_{25}(x,y)+f_{50}(x,y)=0\right\}$. Now let's find out all critical pairs associated to $\mathbb{P}(1,4,25)$.

\begin{thm}
    The walls and the corresponding critical pairs associated to $\mathbb{P}(1,4,25)$ are listed in the following table:
    \begin{center}
    \renewcommand*{\arraystretch}{1.2}
    \begin{table}[ht]
    \centering
      \begin{tabular}{ |c |c |c|}
    \hline
     wall &  curve $Q^{(3)}_{0}$ on $\mathbb{P}(1,4,25)$ & curve $L^{(3)}_{0}$ on $\mathbb{P}(1,4,25)$  \\  \hline
      $115a+11b=63$   &  $\left\{z^2+x^2y^{12}=0\right\}$   & $\left\{x^2y^2=0\right\}$ \\ \hline 
     $115a-19b=63$  &  $\left\{z^2+x^2y^{12}=0\right\}$   & $\left\{x^6y=0\right\}$\\ \hline   
     $115a-49b=63$ &     $\left\{z^2+x^2y^{12}=0\right\}$   & $\left\{x^{10}=0\right\}$ \\ \hline  
     $95a+13b=54$ &     $\left\{z^2+x^6y^{11}=0\right\}$   & $\left\{x^2y^2=0\right\}$ \\ \hline
     $95a-17b=54$ &     $\left\{z^2+x^6y^{11}=0\right\}$   & $\left\{x^6y=0\right\}$ \\ \hline
     $95a-47b=54$ &     $\left\{z^2+x^6y^{11}=0\right\}$   & $\left\{x^{10}=0\right\}$ \\ \hline
     \end{tabular}
     \caption{ K-moduli walls from $\mathbb{P}(1,4,25)$}
     \label{Kwall5}
\end{table}
\end{center}
\end{thm}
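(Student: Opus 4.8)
The plan is to mirror the analysis of the $X_{26}$-pairs: use Theorem \ref{whenP1425appear} to fix the shape of the two curves, reduce to finitely many torus-invariant normal forms, and read off each wall from the single equation $Fut(\lambda)=\beta(E)=A(E)-S(E)=0$ via Fujita's formula (\cite{Fujita}).

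First I would record, from Theorem \ref{whenP1425appear}, that on any K-semistable pair the point $[0:0:1]$ avoids $Q^{(3)}$, so that $Q^{(3)}=\{z^2+zf_{25}(x,y)+f_{50}(x,y)=0\}$ with nonzero $z^2$-term, while $L^{(3)}$, a combination of $x^2y^2$, $x^6y$ and $x^{10}$, always contains $H_x$ to order at least two. On a strictly K-polystable pair the maximal torus $\mathbb{T}\subset\aut(X;Q^{(3)},L^{(3)})$ is one-dimensional: the only $Q^{(3)}$ fixed by the full two-torus of $\mathbb{P}(1,4,25)$ is the monomial $z^2=2H_z$, a purely toric degeneration that I would dispose of separately by checking that its coordinate (horizontal) divisors cannot simultaneously satisfy $\beta=0$ inside the log Fano domain. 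In the genuine complexity-one case a one-parameter subgroup $\lambda=(1,0,q)$ fixing $Q^{(3)}$ forces every monomial of $f_{50}$ to share the same $\lambda$-weight; since $i+4j=50$ this collapses $f_{50}$ to a single monomial $x^{2q}y^{(50-2q)/4}$ with $q$ odd, and completing the square in $z$ removes the $zf_{25}$-term. Simultaneously $L^{(3)}$ degenerates to one of $\{x^2y^2=0\}$, $\{x^6y=0\}$, $\{x^{10}=0\}$, exactly as $Q^{(2)}$ was shown to degenerate to $Q^{(2)}_0$ on $X_{26}$.

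The wall attached to each surviving normal form then comes from one divisorial valuation: the weighted blow-up $E$ of the $\frac{1}{4}(1,1)$-point $[0:1:0]$ of weight $(1,q)$, adapted to the $A_{2q-1}$-singularity $\{z^2+x^{2q}=0\}$ that $Q^{(3)}_0$ acquires there, pushed through the cyclic quotient map $\varphi:\mathbb{A}^2_{x,z}\to\mathbb{P}(1,4,25)$ over $\{y=1\}$ as in Theorem \ref{whenP1425appear}. A direct local computation gives $A(E)=1+q$ and $S(E)=\frac{50+2q}{15}(3-5a-b)$, together with $\ord_E(Q^{(3)}_0)=2q$ and $\ord_E(L^{(3)}_0)=:\ell\in\{2,6,10\}$ according to the three positions of $L^{(3)}_0$. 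Since $\lambda$ fixes only the three coordinate points, there is no horizontal divisor, so by Theorem \ref{complexity1} polystability is equivalent to $Fut(\lambda)=\beta(E)=0$ together with $\beta(H_x),\beta(H_y),\beta(H_z)>0$. Imposing $\beta(E)=0$ yields the family of lines $(250-20q)a+(50+2q-15\ell)b=135-9q$; specializing to $q=1$ (so $Q^{(3)}_0=\{z^2+x^2y^{12}=0\}$) reproduces $115a+11b=63$, $115a-19b=63$, $115a-49b=63$, and to $q=3$ (so $Q^{(3)}_0=\{z^2+x^6y^{11}=0\}$) reproduces $95a+13b=54$, $95a-17b=54$, $95a-47b=54$.

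I expect the main obstacle to be the reduction of $q$ to the two values $\{1,3\}$. For $q=5$ the displayed equation (with $\ell=2$) becomes exactly the boundary $5a+b=3$ of the log Fano domain, and for every $q\ge5$ the corresponding Futaki line lies on or beyond this boundary, or else inside the forbidden region $115a+11b<63$ in which the weight-$(1,1)$ valuation of Theorem \ref{whenP1425appear} already destabilizes the pair; hence no such degeneration produces an interior wall. Turning this into a clean argument requires controlling, uniformly in $q$, both the volume computation behind $S(E)$ and the position of the weight-$(1,q)$ Futaki line relative to the weight-$(1,1)$ semistability bound. Once $q\in\{1,3\}$ is established, the residual checks --- evaluating the order functions $\ord_E$ and verifying $\beta(H_x),\beta(H_y),\beta(H_z)>0$ along each of the six lines --- are routine and confirm that precisely the pairs in the table are strictly K-polystable.
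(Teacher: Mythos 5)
Your proposal is correct and reproduces all six walls, but it takes two genuinely different technical routes from the paper, both of which check out. The paper keeps general torus weights $(n_1,n_2,n_3)$, sets $m_1=n_2-4n_1$, $m_2=n_3-25n_1$, and tests the Futaki character on the $(m_1,m_2)$-weighted blow-up at the \emph{smooth} point $[1:0:0]$ in the coordinates $(y,z)$, arriving at the wall equation $\frac{125m_1-40m_2}{45m_1-18m_2}a+\frac{25m_1+4m_2-30\ord_{E}(L^{(3)})}{45m_1-18m_2}b=1$; you instead normalize the one-parameter subgroup to $(1,0,q)$ and test it on the $(1,q)$-weighted blow-up over the $\frac{1}{4}(1,1)$-point $[0:1:0]$ through the cyclic cover. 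The two valuations are induced by the same $\mathbb{C}^*$ (up to inversion and the trivial diagonal torus), so both compute $Fut(\lambda)$; your formulas $A(E)=1+q$, $S(E)=\frac{50+2q}{15}(3-5a-b)$, $\ord_E(Q^{(3)}_0)=2q$, $\ord_E(L^{(3)}_0)=\ell$ are correct (they extend the weight-$(1,1)$ computation already made in Theorem \ref{whenP1425appear}), and your line $(250-20q)a+(50+2q-15\ell)b=135-9q$ specializes exactly to the six listed walls, matching the paper's equation under the dictionary $m_2/m_1=(25-q)/4$. The second difference is the exclusion step: the paper discards all ratios $m_2/m_1\notin\{6,11/2\}$ via the vertical-divisor inequalities $\beta(H_x)>0$, $\beta(H_y)>0$ (e.g.\ $25a-7b>9$ and $50a+4b>27$ when $L^{(3)}=\{x^2y^2=0\}$), whereas you discard $q\ge 5$ using the log Fano boundary $5a+b<3$ together with the semistability bound $115a+11b\ge 63$ of Theorem \ref{whenP1425appear}. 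Your criterion does work: for $\ell=2$ every Futaki line passes through the corner $(\frac12,\frac12)$ where $5a+b=3$ meets $115a+11b=63$, and for $\ell=6,10$ the finitely many remaining $q$ are checked directly. Your stated worry about ``uniformity in $q$'' is therefore overstated --- $q$ ranges only over the odd integers $1,\dots,25$, so the exclusion is a finite check of exactly the same nature as the paper's, and your $S(E)$ formula is already uniform (linear) in $q$. One point where you are more careful than the paper: you explicitly dispose of the toric case $Q^{(3)}=\{z^2=0\}$, whose three conditions $\beta(H_x)=\beta(H_y)=\beta(H_z)=0$ are simultaneously solvable only at the boundary point $(\frac12,\frac12)$, while the paper silently omits this case by assuming complexity one from the outset.
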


\begin{proof}
    Let $(\mathbb{P}(1,4,25),aQ^{(3)}+bL^{(3)})$ is a complexity one K-polystable log Fano pair. Assume the torus action is defined by:
    \[
    t\circ[x:y:z]=[t^{n_1}x:t^{n_2}y:t^{n_3}z],\quad \forall t\in\mathbb{C}^*.
    \]
    Note that there is no horizontal divisor under this action.
    Since $Q^{(3)}$ is invariant under this action, we have:
    \begin{align*}
        n_3-25n_1=(6-k)(n_2-4n_1),& \quad k\in\left\{0,1,...,6\right\}\\
        or\quad 2(n_3-25n_1)=(12-l)(n_2-4n_1),&\quad l\in\left\{0,1,...,12\right\}
    \end{align*}
    which corresponding to the equation of $Q^{(3)}$ is 
    \begin{align*}
        \left\{z^2+zf_{25}(x,y)=0\right\}\\
        or\quad \left\{z^2+f_{50}(x,y)=0\right\}
    \end{align*}
    respectively.
    
    Write $m_1=n_2-4n_1$, $m_2=n_3-25n_1$. Take the weighted blow up at $[1:0:0]$ with weight $(m_1,m_2)$ along the coordinate $(y,z)$:
    \[
    \pi:Y\rightarrow\mathbb{P}(1,4,25)=X,
    \]
    Let $E$ be the exceptional divisor. $E^2=-\frac{1}{m_1m_2}$. We have:
    \begin{align*}
        A(E)=&m_1+m_2-a\ord_{E}(Q^{(3)})-b\ord_{E}(L^{(3)});\\
        S(E)=&\frac{25m_1+4m_2}{10}(1-\frac{5}{3}a-\frac{1}{3}b).
    \end{align*}
    Since $Fut(\lambda)=0$, we have $A(E)=S(E)$, thus
    \begin{equation}\label{wall3fut}
    \frac{125m_1-40m_2}{45m_1-18m_2}a+\frac{25m_1+4m_2-30\ord_{E}(L^{(3)})}{45m_1-18m_2}b=1.
    \end{equation}
    On the other hand, $H_x$, $H_y$ are horizontal divisor under above action, and 
    \begin{align*}
        A(H_x)=&1-b\ord_{H_x}(L);\\
        S(H_x)=&10(1-\frac{5}{3}a-\frac{1}{3}b);\\
        A(H_y)=&1-b\ord_{H_y}(L);
        \\
         S(H_x)=&\frac{5}{2}(1-\frac{5}{3}a-\frac{1}{3}b).
    \end{align*}
    Thus 
    \begin{align}\label{wallvert}
        1-b\ord_{H_x}(L)>&10(1-\frac{5}{3}a-\frac{1}{3}b);\\
        1-b\ord_{H_y}(L)>&\frac{5}{2}(1-\frac{5}{3}a-\frac{1}{3}b).\label{vert}
    \end{align}
    \begin{itemize}
        \item If $\ord_{H_{x}}(L^{(3)})=2$, this implies that $\ord_{H_y}(L^{(3)})=2$, $\ord_{E}(L^{(3)})=2m_1$. The inequality(\ref{wallvert}) and (\ref{vert}) can be written as:
        \begin{align}
            25a-7b>&9\label{ineq1}\\
            50a+4b>&27\label{ineq2}
        \end{align}
        Combine with constraints (\ref{ineq1}), (\ref{ineq2}) and (\ref{wall3fut}), we know that it is only possible for $\frac{m_2}{m_1}=6$ or $\frac{11}{2}$ which is corresponding to the invariant curves
        $\left\{z^2+x^2y^{12}=0\right\}$
        or$\left\{z^2+x^6y^{11}=0\right\}$
        respectively. So we obtained the critical pairs $(\mathbb{P}(1,4,25),a\cdot\left\{z^2+x^2y^{12}=0\right\}+b\cdot\left\{x^2y^2=0\right\})$ in which $115a+11b=63$ and $(\mathbb{P}(1,4,25),a\cdot\left\{z^2+x^6y^{11}=0\right\}+b\cdot\left\{x^2y^2=0\right\})$ in which $95a+13b=54$.
        \item If $\ord_{H_{x}}(L^{(3)})=6$, this implies that $\ord_{H_y}(L^{(3)})=1$, $\ord_{E}(L^{(3)})=m_1$. The inequality(\ref{wallvert}) and (\ref{vert}) can be written as:
        \begin{align}
            25a-b>&9\label{ineq3}\\
            50a-8b>&27\label{ineq4}
        \end{align}
         Combine with constraints (\ref{ineq3}), (\ref{ineq4}) and (\ref{wall3fut}), we know that it is only possible for $\frac{m_2}{m_1}=6$ or $\frac{11}{2}$ which is corresponding to the invariant curves
        $\left\{z^2+x^2y^{12}=0\right\}$
        or$\left\{z^2+x^6y^{11}=0\right\}$
        respectively. So we obtained the critical pairs $(\mathbb{P}(1,4,25),a\cdot\left\{z^2+x^2y^{12}=0\right\}+b\cdot\left\{x^6y=0\right\})$ in which $115a-19b=63$ and $(\mathbb{P}(1,4,25),a\cdot\left\{z^2+x^6y^{11}=0\right\}+b\cdot\left\{x^6y=0\right\})$ in which $95a-17b=54$.   
        \item If $\ord_{H_{x}}(L^{(3)})=10$, this implies that $\ord_{H_y}(L^{(3)})=0$, $\ord_{E}(L^{(3)})=0$. The inequality(\ref{wallvert}) and (\ref{vert}) can be written as:
        \begin{align}
            25a+5b>&9\label{ineq5}\\
            50a-20b>&27\label{ineq6}
        \end{align}
         Combine with constraints (\ref{ineq5}), (\ref{ineq6}) and (\ref{wall3fut}), we know that it is only possible for $\frac{m_2}{m_1}=6$ or $\frac{11}{2}$ which is corresponding to the invariant curves
        $\left\{z^2+x^2y^{12}=0\right\}$
        or$\left\{z^2+x^6y^{11}=0\right\}$
        respectively. So we obtained the critical pairs $(\mathbb{P}(1,4,25),a\cdot\left\{z^2+x^2y^{12}=0\right\}+b\cdot\left\{x^{10}=0\right\})$ in which $115a-49b=63$ and $(\mathbb{P}(1,4,25),a\cdot\left\{z^2+x^6y^{11}=0\right\}+b\cdot\left\{x^{10}=0\right\})$ in which $95a-47b=54$.    
    \end{itemize}
    We finished the proof.
\end{proof}

\section{Explicit wall crossings}

So far we have found all of critical lines where the $K$-moduli space $\overline{P}^K_{a,b}$ change. They are attached to surfaces $\mathbb{P}^2$, $\mathbb{P}(1,1,4)$, $\mathbb{P}(1,4,25)$ and $X_{26}$ respectively. The wall-chamber structure of the coefficient domain of $\overline{P}^K_{(a,b)}$ can be draw in a picture as follows( Figure\ref{fig:enter-label}):

\begin{figure}[h!]
    \centering
    \includegraphics[width=0.8\linewidth]{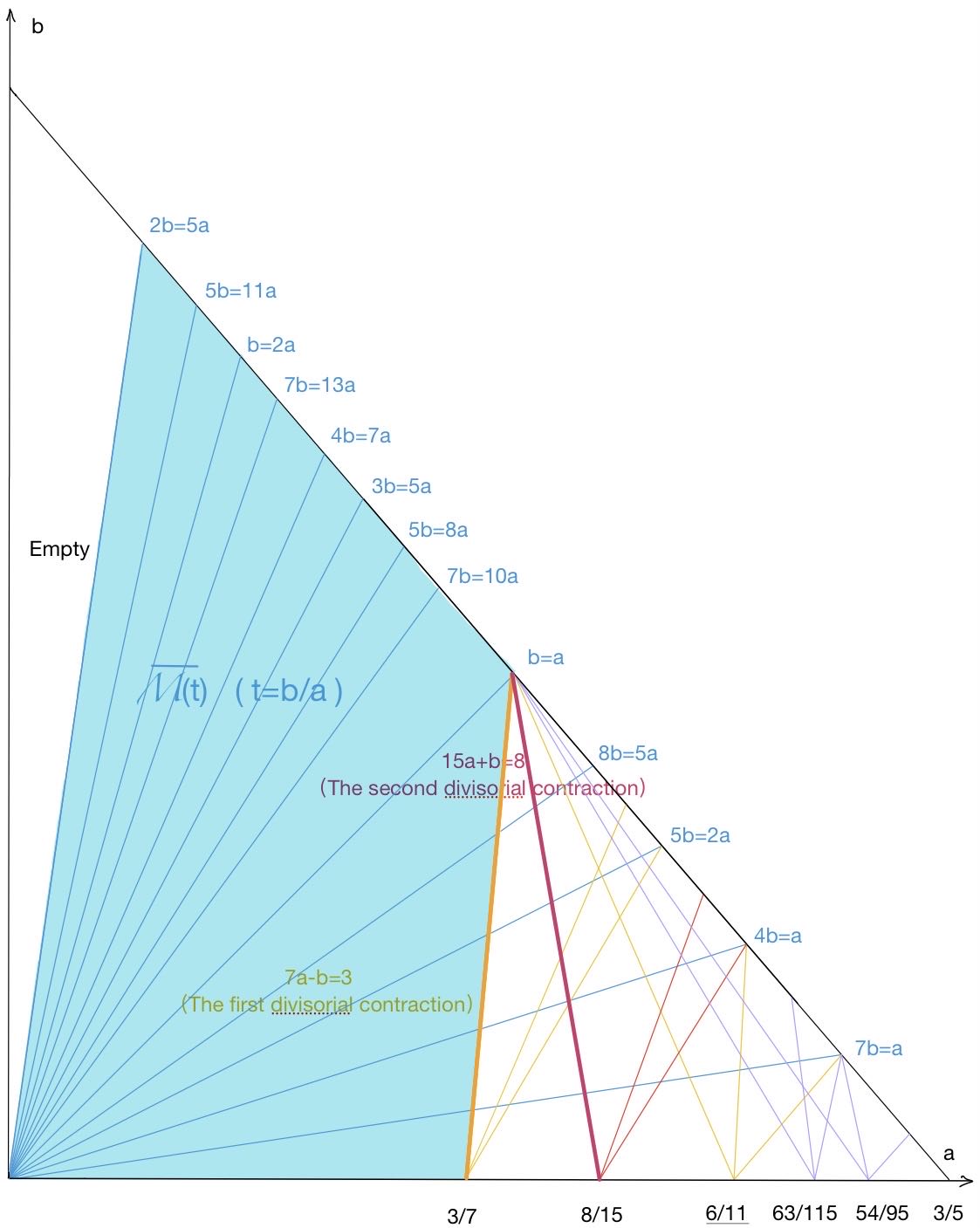}
    \caption{wall-chamber structure of $\overline{P}^K_{a,b}$}
    \label{fig:enter-label}
\end{figure}
The lines are the critical lines that we have computed. More precisely:

There are $13$ lines passing through the origin which corresponding to critical lines associated to $\mathbb{P}^2$. Moreover, the K-moduli space $\overline{P}^K_{a,b}$ is non-empty if and only if $5a\ge2b$ and $5a+b\le3$;

There are $6$ lines with horizontal interception $\frac{3}{7}$ and $\frac{6}{11}$ respectively which corresponding to critical lines associated to $\mathbb{P}(1,1,4)$. Moreover, surface $\mathbb{P}(1,1,4)$ appears only if $7a-b\ge3$;

There are $6$ lines with horizontal interception $\frac{63}{115}$ and $\frac{54}{95}$ respectively which corresponding to critical lines associated to $\mathbb{P}(1,4,25)$ and $\mathbb{P}(1,4,25)$ appears only if $115a+11b\ge63$;

There are also one line with horizontal interception $\frac{8}{15}$ which corresponding to critical lines associated to $X_{26}$ and $X_{26}$ appears only if $15a+b\ge8$.

As is explained in theorem \ref{critical}, the moduli space $\overline{P}^K_{(a,b)}$ does not change as $(a,b)$ varies in domains enclosed by above lines.

In this section we will analyze the what happens on the K-moduli space $\overline{P}^K_{a,b}$ at these critical lines.

\begin{defn}
    We call the critical line is horizontal wall if it arising from the pair $\mathbb{P}^2$. Otherwise, the critical line is said to be vertical wall. More precisely, we call the vertical wall crossing are Type (\uppercase\expandafter{\romannumeral1})-(\uppercase\expandafter{\romannumeral3}) if the general pair parameterized by center is $\mathbb{P}(1,1,4)$, $X_{26}$ or $\mathbb{P}(1,4,25)$ respectively.
\end{defn}

\begin{thm}\label{horizontalwall}
    Let $(a,b)$ be a pair of positive rational numbers such that $5a+b\le3$, $2b\le5a$ and $7a-b\le3$. Then we have the isomorphism between the GIT-moduli space of degree 5 pairs and the K-moduli space: $\overline{\mathcal{M}}(t)\cong\overline{P}_{a,b}^K$, where $t=\dfrac{b}{a}$.
\end{thm}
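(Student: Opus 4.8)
The plan is to show that, throughout this region, every pair parameterized by $\overline{P}^K_{a,b}$ is supported on $\mathbb{P}^2$, and then to identify the resulting $K$-moduli space with the GIT quotient that defines Laza's space $\overline{\mathcal{M}}(t)$.

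First I would rule out the three singular surfaces. By Theorem \ref{degeneration} the surface underlying any pair of $\overline{P}^K_{a,b}$ is one of $\mathbb{P}^2$, $\mathbb{P}(1,1,4)$, $\mathbb{P}(1,4,25)$, $X_{26}$, and by Theorems \ref{whenp114appear}, \ref{whenP1425appear}, \ref{whenX26appear} these last three can occur only when $7a-b\ge3$, $115a+11b\ge63$, $15a+b\ge8$ respectively. The hypotheses $5a+b\le3$ and $7a-b\le3$ add up to $12a\le6$, hence $a\le\tfrac12$; therefore $15a+b\le10a+3\le8$ and $115a+11b\le60a+33\le63$, while $7a-b\le3$ is assumed. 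Thus in the interior of the region all three thresholds fail, so the only possible surface is $\mathbb{P}^2$ (equality forces the unique corner $(a,b)=(\tfrac12,\tfrac12)$, which lies on the walls and is treated by the wall-crossing analysis below).

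Next I would realize both moduli spaces as quotients of a single parameter space. Set
\[
Z=\mathbb{P}\bigl(H^0(\mathbb{P}^2,\mathcal{O}(5))\bigr)\times\mathbb{P}\bigl(H^0(\mathbb{P}^2,\mathcal{O}(1))\bigr),
\]
with its natural $\PGL_3=\aut(\mathbb{P}^2)$-action and tautological family $(\mathbb{P}^2,a\mathcal{Q}+b\mathcal{L})$. Since every $K$-semistable member is a pair on $\mathbb{P}^2$, the locus $Z^{\mathrm{Kss}}\subset Z$ of $K$-semistable pairs is $\PGL_3$-invariant and open, the $K$-moduli stack is $[Z^{\mathrm{Kss}}/\PGL_3]$, and $\overline{P}^K_{a,b}$ is its good moduli space $Z^{\mathrm{Kss}}\q\PGL_3$. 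On the other hand $\overline{\mathcal{M}}(t)$ is, by construction, the GIT quotient $Z^{\mathrm{ss}}_t\q\PGL_3$ for the linearization $L_t$ on $Z$ weighting the line against the quintic with relative weight $t=b/a$. The theorem is therefore equivalent to the identity of invariant open sets $Z^{\mathrm{Kss}}=Z^{\mathrm{ss}}_t$.

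This last equality is the crux, and the step I expect to be the main obstacle. I would establish it by showing that $K$-stability here is detected purely by one-parameter subgroups of $\PGL_3$: by the destabilization theory underlying the $K$-moduli construction (\cite{ADL19}, \cite{BLX19}), a strictly $K$-unstable pair admits a special test configuration whose central fibre is $K$-semistable, hence again a pair on $\mathbb{P}^2$ by the first paragraph; a normal test configuration whose general and central fibres are both $\mathbb{P}^2$ is a product configuration, arising from some $1$-PS $\lambda\subset\PGL_3$. It then remains to match $Fut(\lambda)$ with the Mumford weight $\mu^{L_t}(\lambda,(Q,L))$: following the CM-line-bundle computation of \cite{ADL19}, the CM $\mathbb{Q}$-line bundle on $Z$ is $\PGL_3$-linearly equivalent to a positive multiple of $L_t$, so that $Fut(\lambda)=c\,\mu^{L_t}(\lambda,(Q,L))$ with $c>0$, and the valuative sign conditions $\beta_{(\mathbb{P}^2,aQ+bL)}(E_\lambda)\ge0$ ranging over all $\lambda$ become exactly the Hilbert--Mumford inequalities at slope $t$. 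This yields $Z^{\mathrm{Kss}}=Z^{\mathrm{ss}}_t$ together with the matching of polystable loci; since the ample CM polarization on $\overline{P}^K_{a,b}$ then coincides with the GIT polarization on $\overline{\mathcal{M}}(t)$, the induced morphism of good moduli spaces is an isomorphism of polarized projective schemes. The boundary walls $7a-b=3$, $2b=5a$, $5a+b=3$ are absorbed by continuity together with the wall-crossing isomorphism of Theorem \ref{type1wallcrossing} and the non-emptiness computation. The two delicate points to nail down are the reduction to product test configurations and the precise normalization identifying the relative GIT weight with $t=b/a$.
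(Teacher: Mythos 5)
Your first paragraph reproduces the paper's own first step: the paper likewise combines Theorem \ref{degeneration} with Theorems \ref{whenp114appear}, \ref{whenX26appear} and \ref{whenP1425appear} to conclude that every pair parameterized by $\overline{P}^K_{a,b}$ lives on $\mathbb{P}^2$ in this region, and your arithmetic ($15a+b\le 10a+3\le 8$ and $115a+11b\le 60a+33\le 63$) is correct. Where you diverge is the second step: the paper simply invokes Theorem 1.1 of \cite{zhou2023chamberdecompositionksemistabledomains} to pass from ``all K-semistable members lie on $\mathbb{P}^2$'' to $\overline{\mathcal{M}}(t)\cong\overline{P}^K_{a,b}$, whereas you attempt to prove this K-moduli/GIT comparison from scratch.

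In that attempt there is a genuine gap. The claim that ``a strictly $K$-unstable pair admits a special test configuration whose central fibre is $K$-semistable'' is false: if the central fibre $(X_0,D_0)$ of a special test configuration is K-semistable, then the configuration, viewed as a product-type configuration of $(X_0,D_0)$ via the induced $\mathbb{G}_m$-action, has the same generalized Futaki invariant, which is therefore $\ge 0$; hence a destabilizing test configuration can never have K-semistable central fibre. The two-step degeneration theory of \cite{BLX19} and related work only produces a central fibre that is semistable in a twisted sense, and your first paragraph (which applies only to K-semistable smoothable pairs) gives no control on such a fibre, so the reduction to one-parameter subgroups of $\PGL_3$ collapses. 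What survives is the easy direction, K-semistable $\Rightarrow$ GIT-semistable, via the CM-line-bundle identification $Fut(\lambda)=c\,\mu^{L_t}(\lambda,\cdot)$ with $c>0$. The hard direction, GIT-semistable $\Rightarrow$ K-semistable, which is exactly the content of the theorem the paper cites, needs a different mechanism: generic pairs are K-stable; properness of the K-moduli space together with your first paragraph forces K-polystable limits of such families to stay on $\mathbb{P}^2$; separatedness of the GIT quotient (uniqueness of polystable degenerations) identifies these limits with the GIT-polystable points; and finally one uses that a special degeneration with $Fut=0$ onto a K-semistable pair implies K-semistability of the original pair. This is precisely the argument the paper itself carries out in Section 5 for the $\mathbb{P}(1,1,4)$ GIT comparison preceding Theorem \ref{type1wallcrossing}, and you would need to run it here (or cite \cite{zhou2023chamberdecompositionksemistabledomains}, as the paper does) to close the gap.
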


\begin{proof}
    By Theorem \ref{whenp114appear}, Theorem \ref{whenX26appear} and Theorem \ref{whenP1425appear}, we have known that if the $[(X;Q,L)]\in\overline{P}^K_{a,b}$, then $X\cong\mathbb{P}^2$. Thus by Theorem 1.1 in \cite{zhou2023chamberdecompositionksemistabledomains}, we known that there is an isomorphism between the moduli space: $\overline{\mathcal{M}}(\frac{b}{a})\cong\overline{P}_{a,b}^K$.
\end{proof}

From the above theorem, we conclude that all horizontal wall crossing morphism of the K-moduli space $\overline{P}^K_{a,b}$ can be identify to the VGIT wall crossing morphism described as in \cite{laza2007deformationssingularitiesvariationgit}. Moreover, we can always deduced from above theorem that $\overline{P}^{K}_{1,1}\cong(\mathcal{D}/\Gamma)^*$.

Next let's focus on the vertical horizontal wall crossings. Let $(a_i,b_i)$ be a pair of positive rational numbers lies on the vertical wall and let $\Sigma_{a_i,b_i}$ be the closure of the union of the points on $\overline{P}^K_{a_i,b_i}$ that corresponding critical pairs. We call it the center of $\overline{P}^K_{a,b}$ on this wall. We know that the wall crossing morphisms 
\[
\phi^{\pm}_{a_i\pm\varepsilon,b_i\pm\eta}:\overline{P}^K_{a_i\pm\varepsilon,b_i\pm\eta}\rightarrow\overline{P}^K_{a_i,b_i}
\] 
are birational morphism and it is isomorphism over $\overline{P}^K_{a_i,b_i}\backslash\Sigma_{a_i,b_i}$. So to describe the vertical wall crossing of the K-moduli space we need to investigate its exceptional loci. 

\begin{defn}
    We call the preimage of center under the wall crossing morphism $\phi^{\pm}_{a_i,b_i}$ by exceptional locus and denote it by $E^{\pm}_{a_i,b_i}$. Equivalently,
    \[
    \left\{
    \begin{array}{cc} \text{$[(X;Q,L)]\in\overline{P}^K_{a_i\pm\varepsilon,b_i\pm\eta}$ and 
    admits a special degeneration }&\\
    \text{to K-polystable log Fano pair parameterized by $\Sigma_{a_i,b_i}$.}&\\
    \end{array}\right\}
    \]
\end{defn}
  
\subsection{The Type(\uppercase\expandafter{\romannumeral1}) vertical wall crossing}

\begin{notation}
    Let $C$ be a smooth conic in $\mathbb{P}^2$, let $L$ be a line in $\mathbb{P}^2$ transverse to $C$, and denote $Q_1=2C+L$. Let $Q^{(2)}$ and $L^{(1)}$ be the degree 10 curve and degree 2 curve on $\mathbb{P}(1,1,4)$ which defined by $\left\{z^2xy=0\right\}$ and $\left\{xy=0\right\}$ respectively.
\end{notation}

\begin{thm}\label{P2toP114}
    The log Fano pair $(\mathbb{P}^2,a_1Q_1+b_1L)$ is K-semistable with K-polystable degeneration $(\mathbb{P}(1,1,4),a_1Q^{(1)}+b_1L^{(1)})$. Moreover, if $L'$ is another line in $\mathbb{P}^2$ which is distinct to $L$, then  $(\mathbb{P}^2,a_1Q_1+b_1L')$ is also K-semistable and admits a special degeneration to K-polystable pair $(\mathbb{P}(1,1,4),a_1Q^{(1)}+b_1L^{(1)'})$, where $L^{(1)'}$ is the union of two lines on $\mathbb{P}(1,1,4)$ and both of them are distinct with $\{x=0\}$ and $\{y=0\}$.
\end{thm}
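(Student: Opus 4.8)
The plan is to realize the asserted degeneration by an explicit $\mathbb{Q}$-Gorenstein special test configuration and then to transfer stability from the central fibre, whose K-polystability is already known from Theorem \ref{4.8}, to the general fibre. Thus I would not try to verify K-semistability of $(\mathbb{P}^2, a_1 Q_1 + b_1 L)$ directly through the valuative criterion: the boundary $2a_1 C + (a_1+b_1)L$ is only strictly semistable, so $\beta(E)=0$ for some $E$ and checking $\beta\ge 0$ for every $E$ is delicate. Instead I would produce the degeneration and invoke lower semicontinuity of the stability threshold.

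First I would build the family $\mathbb{P}^2 \rightsquigarrow \mathbb{P}(1,1,4)$ adapted to the conic. Choosing coordinates $[u:v:w]$ with $C = \{uw = v^2\}$, embed $\mathbb{P}^2$ anticanonically by $|\mathcal{O}(3)|$ and $\mathbb{P}(1,1,4)$ by $|\mathcal{O}(6)|$, both into $\mathbb{P}^9$; under this matching the weight-$4$ coordinate $z$ corresponds to the conic direction $uw - v^2$, and a suitable diagonal one-parameter subgroup $\lambda$ collapses the conormal direction of $C$ to produce the flat limit $\mathbb{P}(1,1,4)$ with its vertex $\frac{1}{4}(1,1)$ at $[0:0:1]$. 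Since $\frac{1}{4}(1,1)$ is a $T$-singularity of Milnor number $0$, the total space is $\mathbb{Q}$-Gorenstein, so $(\mathcal{X},\mathcal{D})/\mathbb{A}^1$ is a genuine $\mathbb{Q}$-Gorenstein test configuration. I would then track the limits of the boundary curves: the class $\mathcal{O}_{\mathbb{P}^2}(2)=-\tfrac{2}{3}K$ matches $\mathcal{O}_{\mathbb{P}(1,1,4)}(4)$, so $2C$ limits to $2H_z$; and $\mathcal{O}_{\mathbb{P}^2}(1)=-\tfrac{1}{3}K$ matches $\mathcal{O}_{\mathbb{P}(1,1,4)}(2)$, so the single line $L$, transverse to $C$, limits to the pair of rulings through the vertex meeting $H_z$ in the images of the two points $L\cap C$. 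Normalizing by the maximal torus $\mathbb{T}\subset\aut(\mathbb{P}(1,1,4))$ so that these two points become $\{x=0\}$ and $\{y=0\}$ on $H_z$, the central fibre is exactly $(\mathbb{P}(1,1,4),\, a_1(2H_z+H_x+H_y)+b_1(H_x+H_y)) = (\mathbb{P}(1,1,4),\, a_1 Q^{(1)}+b_1 L^{(1)})$.

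With the test configuration in place the conclusion is formal. By Theorem \ref{4.8} the central fibre is strictly K-polystable on the wall $7a_1-b_1=3$, so its stability threshold $\delta$ equals $1$; lower semicontinuity of $\delta$ in $\mathbb{Q}$-Gorenstein families of log Fano pairs (\cite{BLX19}) then gives $\delta\ge 1$ for the general fibre $(\mathbb{P}^2,a_1 Q_1+b_1 L)$, i.e. it is K-semistable, and uniqueness of the K-polystable degeneration identifies $(\mathbb{P}(1,1,4),a_1 Q^{(1)}+b_1 L^{(1)})$ as that degeneration. For the second assertion I would keep $Q_1=2C+L$ and replace the marked line by $L'$, running the same construction on the boundary $2a_1 C+a_1 L+b_1 L'$. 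The part $a_1 L$ still limits into $Q^{(1)}=\{z^2xy=0\}$ through the rulings $\{x=0\},\{y=0\}$, while the two points $L'\cap C$ limit to two further points of $H_z$; for $L'$ not passing through $L\cap C$ these are distinct from the $x$- and $y$-points, so $L'$ limits to a pair of rulings $L^{(1)'}=\{l_1 l_2=0\}$ with $l_i\neq x,y$. The resulting central pair is K-polystable by the $l_i\neq x,y$ case of Theorem \ref{4.8}, and the same semicontinuity argument yields K-semistability of $(\mathbb{P}^2,a_1 Q_1+b_1 L')$ together with the stated degeneration.

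The crux is the second step: exhibiting the degeneration as an honest $\mathbb{Q}$-Gorenstein test configuration and verifying that the flat limits of $2C$, $L$ and $L'$ are precisely $2H_z$, $H_x+H_y$ and the advertised rulings with the correct multiplicities. Concretely this means fixing the weights of $\lambda$ on the ten cubic monomials, checking flatness and normality of $\mathcal{X}_0$, and confirming that the conormal collapse creates exactly the $\frac{1}{4}(1,1)$ point and not a worse singularity, while the torus normalization that sends $L\cap C$ to $\{x=0\},\{y=0\}$ remains compatible with the simultaneous tracking of $L'$. Once these limits are pinned down, everything else is a direct appeal to Theorem \ref{4.8} and semicontinuity of $\delta$.
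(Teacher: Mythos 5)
Your proposal is correct and takes essentially the same route as the paper: the paper also realizes the degeneration as the degeneration of $\mathbb{P}^2$ to the normal cone of the conic $C$, tracks that $2C$ limits to $2H_z$ and that the transverse line $L$ (resp.\ $L'$) limits to a pair of rulings $\{xy=0\}$ (resp.\ $\{l_1l_2=0\}$ with $l_i\neq x,y$, with tangency giving $l_1=l_2$), and then invokes Theorem \ref{4.8} for K-polystability of the central fibre. The only difference is that you spell out the final semicontinuity/openness step (via \cite{BLX19}) and the $\mathbb{Q}$-Gorenstein verification, which the paper leaves implicit.
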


\begin{proof}
    Taking the degeneration of $\mathbb{P}^2$ to the normal cone of $C$, the pair $(\mathbb{P}^2,C)$ specially degenerates to $(\mathbb{P}(1,1,4),\{z=0\})$. Since $L$ intersect $C$ transversely, so it degenerates to union of two distinct rulings of $\mathbb{P}(1,1,4)$. After a suitable coordinate change, we may assume that the degeneration of $L$ is defined by $\{xy=0\}$. The degeneration of $L'$ can be written as $\{l_11l_2=0\}$ where $l_i\neq\{x\}$ and $\{y=0\}$. Note that if $L'$ is tangent to $C$, then $l_1=l_2$. Thus we shown that $(\mathbb{P}^2,a_1Q_1+b_1L)$ and $(\mathbb{P}^2,a_1Q_1+b_1L')$ can specially degenerate to $(\mathbb{P}(1,1,4),a_1Q^{(1)}+b_1L^{(1)})$ and $(\mathbb{P}(1,1,4),a_1Q^{(1)}+b_1L^{(1)'})$ respectively. The K-polystability of $(\mathbb{P}(1,1,4),a_1Q^{(1)}+b_1L^{(1)})$ and $(\mathbb{P}(1,1,4),a_1Q^{(1)}+b_1L^{(1)'})$ has been proven in the Theorem \ref{4.8}.
\end{proof}

In order to describe the exceptional locus of the wall crossing morphism, we define GIT stability for certain pairs of curves on $\mathbb{P}(1,1,4)$. 

\begin{defn}\label{GITonP114}
Denote the $x,y,z$ be coordinates of $\mathbb{P}(1,1,4)$.
\begin{enumerate}
    \item Let $Q$ be a curve in $\mathbb{P}(1,1,4)$ of degree $10$ with equation 
\begin{equation*}
    z^2xy+zf_{6}(x,y)+f_{10}(x,y)=0
\end{equation*}
where $f_6(x,y)$ contains no monomial divisible by $xy$. Then we identify $Q$ to a point $(f_6,f_{10})$ in the vector space 
\[
\mathbf{A}'_{10}:=V_1\oplus H^{0}(\mathbb{P}^1,\mathcal{O}_{\mathbb{P}^1}(10)),
\]
where $V_1:=\mathbb{C}x^6\oplus\mathbb{C}y^6$ is a subvector space of $H^0(\mathbb{P}^1,\mathcal{O}_{\mathbb{P}^1}(6))$. Consider the $\mathbb{G}_m$-action $\sigma$ on $\mathbf{A}'_{10}$ with weight $1$ on $V_1$ and weight $2$ on $H^{0}(\mathbb{P}^1,\mathcal{O}_{\mathbb{P}^1}(10))$. Let $\mathbf{P}'_{10}$ be the weight projective space which is the coarse moduli space of the quotient stack $[(\mathbf{A}'_{10})\backslash\left\{0\right\}/\mathbb{G}_m]$.
\item Let $L$ be a curve in $\mathbb{P}(1,1,4)$ of degree $2$ with equation
\[
\lambda_1x^2+\lambda_2xy+\lambda_3y^2=0
\]
Then we can identify $L$ to a point in $\mathbf{A}'_{2}=H^0(\mathbb{P}^1,\mathcal{O}_{\mathbb{P}^2}(2))$. Let $\mathbf{P}'_{2}$ be the corresponding projective space.
\end{enumerate}
We Consider a $\mathbb{G}_m$-action $\sigma'$ on $\mathbf{A}'_{10}\times\mathbf{A}'_{2}$ induced by
\begin{equation*}
    \sigma'(t)[x,y]=[tx,t^{-1}y].
\end{equation*}
on $\mathbb{P}^1$. It is clearly that it descends to a $\mathbb{G}_m$-action on $(\mathbf{P}'_{10}\times\mathbf{P}'_{2},\mathcal{O}_{\mathbf{P}'_{10}\times\mathbf{P}'_{2}}(a,b))$ which is also denote by $\sigma'$. We say $([Q],[L])\in\mathbf{P}'_{10}\times\mathbf{P}'_{2}$ is GIT (poly/semi)stable with slope $(a,b)$ if it is GIT (poly/semi)stable with respect to the $\mathbb{G}_m$-action $\sigma'$ on $(\mathbf{P}'_{10}\times\mathbf{P}'_{2},\mathcal{O}_{\mathbf{P}'_{10}\times\mathbf{P}'_{2}}(a,b))$.
\end{defn}

\begin{thm}
    Let $(a_1,b_1)$ be a pair of positive rational numbers satisfied $7a_1-b_1=3$ and $5a_1+b_1\le3$, let $(\varepsilon,\eta)$ be a pair of rational numbers satisfied following constraints:
    \begin{equation}\label{epsilon}
        \left\{(\epsilon,\eta)|\text{$7\epsilon-\eta>3$, $35(a_1+\epsilon)+17(b_1+\eta)\le15$, $15(a_1+\epsilon)+(b_1+\epsilon)\le8$}\right\}
    \end{equation}
    Then for any $(Q,L)\in\mathbf{A}'_{10}\times\mathbf{A}'_{2}$, the pair $(\mathbb{P}(1,1,4),(a_1+\varepsilon)Q+(b_1+\eta)L)$ is K-(semi/poly)stable if and only if $([Q],[L])$ is GIT-(semi/poly)stable with slope $(a_1+\epsilon,b_1+\eta)$.
\end{thm}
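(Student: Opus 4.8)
The plan is to identify, over the wall $7a-b=3$, the locus of the $K$-moduli space on which the surface is $\mathbb{P}(1,1,4)$ with the GIT quotient of Definition~\ref{GITonP114}, working in an \'etale neighborhood of the strictly K-polystable pair $(\mathbb{P}(1,1,4),a_1Q^{(1)}+b_1L^{(1)})$. Since the parameter space $\mathbf{A}'_{10}\times\mathbf{A}'_2$ consists of curves on the \emph{fixed} surface $\mathbb{P}(1,1,4)$, the deformations to be accounted for are exactly the motions of the boundary curves $Q\in|\mathcal{O}(10)|$ and $L\in|\mathcal{O}(2)|$ together with the residual action of $\Aut(\mathbb{P}(1,1,4))$; the first task is to compute the resulting slice and its symmetry group.

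First I would use $\Aut(\mathbb{P}(1,1,4))=(\GL_2\times\mathbb{G}_m)\ltimes\mathrm{Sym}^4\langle x,y\rangle$, where $\GL_2$ acts on $(x,y)$, the extra $\mathbb{G}_m$ scales $z$, and the unipotent part consists of the translations $z\mapsto z+g_4(x,y)$, to bring $Q$ into the normal form $z^2xy+zf_6(x,y)+f_{10}(x,y)$. In the chamber cut out by \eqref{epsilon}, Theorem~\ref{whenp114appear} together with the index bound of Lemma~\ref{index} forces the $z^2$-coefficient $f_2$ to be a rank-two quadric (rank one requires crossing $11a+b=6$, and $f_2=0$ is unstable), so $\GL_2$ may be used to fix $f_2=xy$; the residual translations $z\mapsto z+g_4$ then remove precisely the $xy$-divisible monomials of $f_6$, leaving $f_6\in V_1=\mathbb{C}x^6\oplus\mathbb{C}y^6$. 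Together with $L\in\mathbf{A}'_2=H^0(\mathcal{O}(2))$ this realizes the slice as $\mathbf{A}'_{10}\times\mathbf{A}'_2$. After this gauge fixing the surviving symmetry is the two-torus $\mathrm{diag}(s,t)$ on $(x,y)$ with the $z$-scaling; normalizing the leading term $z^2xy$ and passing to the weighted projectivization $\mathbf{P}'_{10}\times\mathbf{P}'_2$ by the action $\sigma$, one checks that the $z$-scaling descends to $\sigma$ and that the diagonal $s=t$ acts trivially, so the effective residual group is the single one-parameter subgroup $\sigma'(t):[x,y]\mapsto[tx,t^{-1}y]$. A direct weight count (namely $x^6,y^6$ carry $\sigma'$-weights $\pm 6$, while $x^iy^{10-i}$ carries weight $2i-10$) confirms that these are exactly the weights installed in Definition~\ref{GITonP114}.

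Next I would match the polarization. By \cite{ADL19} the CM line bundle on the $K$-moduli space descends from an equivariant bundle whose fiberwise $\sigma'$-weight is linear in the coefficients $(a,b)$; evaluated on the slice it is identified with $\mathcal{O}_{\mathbf{P}'_{10}\times\mathbf{P}'_2}(a,b)$. Consequently the Hilbert--Mumford weight of $([Q],[L])$ under $\sigma'$ with respect to $\mathcal{O}(a_1+\varepsilon,b_1+\eta)$ equals a fixed positive multiple of the generalized Futaki invariant of the corresponding test configuration of $(\mathbb{P}(1,1,4),(a_1+\varepsilon)Q+(b_1+\eta)L)$; for the central polystable pair this is consistent with the complexity-one computation in Theorem~\ref{4.8}. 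Invoking the local VGIT presentation of the wall-crossing morphism from \cite{ADL19} and \cite{zhou2023chamberdecompositionksemistabledomains} at this polystable point (an \'etale slice of the good moduli space modeled on $[\,\mathbf{A}'_{10}\times\mathbf{A}'_2/\sigma'\,]$), the K-(semi/poly)stability of the pair then becomes equivalent to $\sigma'$-GIT-(semi/poly)stability of $([Q],[L])$ with slope $(a_1+\varepsilon,b_1+\eta)$, yielding all three equivalences simultaneously.

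The main obstacle I anticipate is twofold, and both parts live in the slice computation of the first two paragraphs. First, one must verify that the normal-form reduction is exhaustive, so that no torus larger than $\sigma'$ survives on $\mathbf{P}'_{10}\times\mathbf{P}'_2$; this is what guarantees that a general member of the slice is genuinely a complexity-one, rather than toric, pair, so that the analysis of the present section applies without a residual ambiguity. Second, one must check that test configurations not arising from $\Aut(\mathbb{P}(1,1,4))$ cannot destabilize within this chamber. The decisive input here is Section~3: the index estimate of Lemma~\ref{index} and the normalized-volume inequality confine every K-semistable degeneration of the family to $\mathbb{P}(1,1,4)$ throughout the chamber, so the only relevant test configurations are the $\sigma'$-ones captured by the GIT problem, and the abstract equivalence reduces to the explicit weight bookkeeping above.
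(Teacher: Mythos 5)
Your route --- reading the equivalence off from an \'etale-local VGIT presentation at the polystable point --- is genuinely different from the paper's, which proves the two implications separately: K-stability $\Rightarrow$ GIT-stability by descending the CM line bundle to $\mathbf{P}'_{10}\times\mathbf{P}'_{2}$ and showing it is ample in the chamber (the Futaki invariant of the product test configuration is linear in $(a,b)$, negative at $(0,0)$, zero at $(a_1,b_1)$, hence positive at $(a_1+\epsilon,b_1+\eta)$); and GIT-stability $\Rightarrow$ K-stability by degenerating $(Q,L)$ to the wall pair via $t\circ[x:y:z]=[x:y:tz]$, then combining properness of the K-moduli space, the classification Theorems \ref{whenp114appear}, \ref{whenX26appear}, \ref{whenP1425appear}, interpolation (to exclude $\mathbb{P}^2$ as the polystable limit), and properness of GIT. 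As written, your argument has a genuine gap, in two parts. The first is that the local model is misidentified: the slice at $(\mathbb{P}(1,1,4),a_1Q^{(1)}+b_1L^{(1)})$ is its versal $\mathbb{Q}$-Gorenstein deformation space, which contains, besides motions of the two curves on the fixed surface, the direction smoothing the $\frac{1}{4}(1,1)$ singularity. That direction cannot be discarded: it is exactly what produces the $\mathbb{P}^2$-pairs on the other side of the wall, and it is why $E^{+}_{a_1,b_1}$ is a \emph{divisor} in the $14$-dimensional moduli space rather than the whole space. So the slice is strictly larger than $[\mathbf{A}'_{10}\times\mathbf{A}'_{2}/\sigma']$; your closing appeal to Lemma \ref{index} (limits of pairs in the family stay on $\mathbb{P}(1,1,4)$) controls degenerations \emph{within} your family but does not remove this deformation direction from the local structure. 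To complete your argument you would still have to identify the full slice with its stabilizer action and CM linearization, and then prove that the induced GIT problem restricted to the hyperplane where the surface stays unsmoothed coincides with Definition \ref{GITonP114} --- which is essentially the work the paper does by hand.

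The second part of the gap is local-to-global: the theorem concerns \emph{every} $(Q,L)\in\mathbf{A}'_{10}\times\mathbf{A}'_{2}$, whereas an \'etale presentation at one polystable point only governs pairs whose K-polystable degeneration is that central pair. GIT-polystable pairs whose closed $\sigma'$-orbits lie away from the origin map to \emph{other} points of $E^{+}_{a_1,b_1}$, and unstable pairs do not appear in the moduli stack at all, so neither is seen by the local model; the paper's properness/classification/interpolation chain is precisely what bridges this, and your proposal offers no substitute. For the record, your first paragraph (the normal form $z^2xy+zf_6+f_{10}$ with $f_6\in V_1$ via $\Aut(\mathbb{P}(1,1,4))$, and the $\sigma'$-weight bookkeeping) is correct, but it is only the setup already implicit in Definition \ref{GITonP114}; and the polarization identification you assert in the middle paragraph is exactly the CM-ampleness computation that the paper must (and does) carry out --- it is the crux of the K $\Rightarrow$ GIT direction, not a fact one can cite.
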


\begin{proof}
    Let $\pi:(\mathbb{P}(1,1,4)\times\mathbf{A}'_{10}\times\mathbf{A}'_{2};\mathcal{Q},\mathcal{L})\rightarrow\mathbf{A}'_{10}\times\mathbf{A}'_{2}$ be the universal family of pairs over $\mathbf{A}'_{10}\times\mathbf{A}'_{2}$ where the fiber of $\pi$ over each point of $(Q,L)$ of $\mathbf{A}'_{10}\times\mathbf{A}'_{2}$ is $(\mathbb{P}(1,1,4);Q,L)$. The $\mathbb{G}_m$-action $\sigma$ on $\mathbf{A}'_{10}\times\mathbf{A}'_{2}$ can be lifted to the universal family, which we also denote by $\sigma$. By quotienting out $\sigma$ we obtain a $\mathbb{Q}$-Gorenstein family of log Fano pairs over the Deligne-Mumford stack $[(\mathbf{A}'_{10}\backslash\left\{0\right\})/\mathbb{G}_m]\times\mathbf{P}'_{2}$. The CM $\mathbb{Q}$-line bundle $\lambda_{CM,\pi,a\mathcal{Q}+b\mathcal{L}}$ on $\mathbf{A}'_{10}\times\mathbf{A}'_{2}$ also descends to a $\mathbb{Q}$-line bundle on $\mathbf{P}'_{10}\times\mathbf{P}'_{2}$ which we denote by $\Lambda_{a,b}$. In order to show that the K-stability implies GIT-stability, we only need to show that $\Lambda_{a_1+\epsilon,b_1+\eta}$ is ample. Note that $\lambda_{CM,\pi,a\mathcal{Q}+b\mathcal{L}}$ is a trivial $\mathbb{Q}$-line bundle on $\mathbf{A}'_{10}\times\mathbf{A}'_{2}$, the degree of $\Lambda_{a,b}$ is equal to the $\sigma$-weight of the central fiber $\lambda_{CM,\pi,a\mathcal{Q}+b\mathcal{L}}\otimes\mathbb{C}(0)$. We know that 
    \[
    \deg\Lambda_{a,b}=Fut((\mathbb{P}(1,1,4),aQ^{(1)}_1+bL^{(1)}_1;\mathcal{O}_{\mathbb{P}(1,1,4)}(4))\times\mathbb{A}^1)
    \]
    where the product test configuration $(\mathbb{P}(1,1,4),aQ^{(1)}_1+bL^{(1)}_1;\mathcal{O}_{\mathbb{P}(1,1,4)}(4))\times\mathbb{A}^1$ is induced by the $\mathbb{G}_m$-action $\sigma$. $Fut((\mathbb{P}(1,1,4),aQ^{(1)}_1+bL^{(1)}_1;\mathcal{O}_{\mathbb{P}(1,1,4)}(4))\times\mathbb{A}^1)$ is linear in the coefficient $(a,b)$, and we know that it is negative when $(a,b)=(0,0)$, and zero when $(a,b)=(a_1,b_1)$. Hence it is positive when $(a,b)=(a_1+\epsilon,b_1+\eta)$. As a result, the CM line bundle $\Lambda_{a_1+\epsilon,b_{1}+\eta}$ is ample on $\mathbf{P}'_{10}\times\mathbf{P}'_{2}$.

    Next we show that the GIT-stability implies K-stability. Fixed any $(\varepsilon,\eta)$ satisfied the conditions (\ref{epsilon}). For any pair $([Q],[L])$ in the $\mathbf{P}'_{10}\times\mathbf{P}'_{2}$, if it is GIT-semistable with slope $(a_1+\varepsilon,b_1+\eta)$ then $L$ is not equals to $\left\{x^2=0\right\}$ or $\left\{y^2\right\}=0$. Assume that $L=\left\{l_1l_2=0\right\}$. Note that $(\mathbb{P}(1,1,4);Q,L)$ admits a special degeneration to the $(a_1,b_1)$-K-semistable pair $(\mathbb{P}(1,1,4);Q^{(1)},L)$ via the $\mathbb{C}^*$-action $t\circ[x:y:z]=[x:y:tz]$, so these pairs are both $(a_1,b_1)$-$K$-semistable. On the other hand, take $\left\{Q_{t}\right\}$ a family of degree $10$ curves on $\mathbb{P}(1,1,4)$ over a smooth pointed curve $(0\in T)$ such that $Q_1=Q$ and $(\mathbb{P}(1,1,4),(a_1+\varepsilon)Q_t+(b_1+\eta)L)$ is K-semistable for $t\in T\backslash\left\{0\right\}$. By properness of K-moduli spaces we have a K-polystable limit $(X,(a_1+\varepsilon)\widetilde{Q}+(b_1+\eta)\widetilde{L})$ of $(\mathbb{P}(1,1,4),(a_1+\varepsilon)Q_t+(b_1+\eta)L)$ as $t$ goes to $0$. By previous analyse we know that $X$ can only possibly be $\mathbb{P}^2$ or $\mathbb{P}(1,1,4)$. But if $X\cong\mathbb{P}^2$, then by interpolation of K-stability we know that $(\mathbb{P}^2,a_1\widetilde{Q}+b_1\widetilde{L}))$ is also K-polystable. However, this implies that $(\mathbb{P}^2,a_1\widetilde{Q}+b_1\widetilde{L}))\cong(\mathbb{P}(1,1,4),a_1Q^{(1)}_1+b_1L^{(1)}_{1})$ which is a contradiction! Thus $X\cong\mathbb{P}(1,1,4)$ and $(\widetilde{Q},\widetilde{L})$ is a pair of curves on $\mathbb{P}(1,1,4)$ which is $GIT$-polystable with slope $(a_1+\varepsilon,b_1+\eta)$ since we have shown that the K-polystability implies GIT-semistability as above. By the properness of GIT, we know that $(\mathbb{P}(1,1,4),(a_1+\varepsilon)Q+(b_1+\eta)L)$ admits a special degeneration to $(\mathbb{P}(1,1,4),(a_1+\varepsilon)\widetilde{Q}+(b_1+\eta)\widetilde{L})$ which implies that $(\mathbb{P}(1,1,4),(a_1+\varepsilon)Q+(b_1+\eta)L)$ is K-semistable. If $(Q,L)$ is GIT-polystable, then $(Q,L)\cong(\widetilde{Q},\widetilde{L})$ which implies that $(\mathbb{P}(1,1,4),(a_1+\varepsilon)Q+(b_1+\eta)L)\cong(\mathbb{P}(1,1,4),(a_1+\varepsilon)\widetilde{Q}+(b_1+\eta)\widetilde{L})$ is also K-polystable. Thus we prove the statement.
\end{proof}

Now we can describe the wall crossing morphism at the first vertical wall more precisely:

\begin{thm}\label{type1wallcrossing}
    Let $(a_1,b_1)$ be a pair of positive rational numbers such that $7a_1-b_1=3$ and $5a_1+b_1\le3$. Fix any another two pairs of rational numbers $(a_{1}^{-},b_{1}^{-})$ and $(a_{1}^{+},b_{1}^{+})$ such that $7a_{1}^{-}-b_{1}^{-}<3$ and $7a_{1}^{+}-b_{1}^{+}>3$. Moreover ,we assume that they are lying in the two distinct chambers which has common face along the the critical line $7a-b=3$. 
    \begin{enumerate}
        \item The general point of the center $\Sigma_{a_1,b_1}$ parameterize the pair $(\mathbb{P}(1,1,4);Q^{(1)},L^{(1)})$, where $L^{(1)}$ is defined by $\left\{xy=0\right\}$ or $\left\{\text{$l_1l_2=0$|$l_i\neq x$ and $y$}\right\}$. In particular, $\dim\Sigma_{a_1,b_1}=1$.
        \item The wall crossing morphism $\phi^{-}_{a_1,b_1}:\overline{P}^K_{a_{1}^{-},b_{1}^{-}}\rightarrow\overline{P}^K_{a_1,b_1}$ is isomorphism which take the points $[(\mathbb{P};Q_{0},L)]$ to $[(\mathbb{P}(1,1,4);Q^{(1)},\{xy=0\})]$ and take the point $[(\mathbb{P};Q_{0},L')]$ to $[(\mathbb{P}(1,1,4);Q^{(1)},\{L^{(2)'}=0\})]$.
        \item The wall crossing morphism $\phi^{+}_{a_1,b_1}:\overline{P}^K_{a_1^+,b_1^+}\rightarrow\overline{P}^K_{a_1,b_1}$ is a divisorial contraction. The exceptional divisor $E^{+}_{a_1,b_1}$ parameterize pair $(\mathbb{P}(1,1,4);Q,L)$ such that $(Q,L)$ is GIT-polystable with slope $(a_1^+,b_1^+)$ in the sense of definition \ref{GITonP114}.
    \end{enumerate}
\end{thm}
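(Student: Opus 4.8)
The plan is to read the three assertions off the local VGIT presentation of the wall provided by \cite{ADL19}, feeding in the explicit degenerations of Theorem \ref{P2toP114} and the K-versus-GIT comparison established in the preceding theorem; recall that $\phi^{\pm}_{a_1,b_1}$ are automatically birational and restrict to isomorphisms over $\overline{P}^K_{a_1,b_1}\setminus\Sigma_{a_1,b_1}$, so all the content lies over the center. First I would pin down $\Sigma_{a_1,b_1}$. By Theorem \ref{4.8} the only $(a_1,b_1)$-K-polystable pairs on the wall $7a-b=3$ are $(\mathbb{P}(1,1,4);\{z^2xy=0\},L^{(1)})$ with $L^{(1)}=\{xy=0\}$ or $\{l_1l_2=0\}$, $l_i\neq x,y$. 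Here $Q^{(1)}=\{z^2xy=0\}$ is forced (it is the $\mathbb{G}_m$-fixed point of $\mathbf{P}'_{10}$) and $L^{(1)}$ ranges over $\mathbf{P}'_2\cong\mathbb{P}^2$, so $\Sigma_{a_1,b_1}$ is the quotient of $\mathbf{P}'_2$ by the weight-$(2,0,-2)$ action of $\mathbb{G}_m$ fixing $\{z^2xy=0\}$; this quotient is one-dimensional, the two kinds of $L^{(1)}$ being its two polystable orbits, which gives $\dim\Sigma_{a_1,b_1}=1$ and part (1).

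For part (3) I would apply the preceding theorem: for $(\varepsilon,\eta)$ in the chamber (\ref{epsilon}), a pair $(\mathbb{P}(1,1,4),a_1^{+}Q+b_1^{+}L)$ is K-(semi/poly)stable if and only if $([Q],[L])$ is GIT-(semi/poly)stable of slope $(a_1^{+},b_1^{+})$ in the sense of Definition \ref{GITonP114}. Hence the locus of $(+)$-side points whose surface is $\mathbb{P}(1,1,4)$ is exactly $E^{+}_{a_1,b_1}\cong(\mathbf{P}'_{10}\times\mathbf{P}'_2)/\!\!/_{(a_1^{+},b_1^{+})}\mathbb{G}_m$. A dimension count, using $\dim\mathbf{P}'_{10}=12$ and $\dim\mathbf{P}'_2=2$, gives $\dim E^{+}_{a_1,b_1}=12+2-1=13$, which is a divisor inside the $14$-dimensional space $\overline{P}^K_{a_1^{+},b_1^{+}}$. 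As $\phi^{+}_{a_1,b_1}$ is an isomorphism off $\Sigma_{a_1,b_1}$ and collapses this divisor onto the one-dimensional $\Sigma_{a_1,b_1}$, it is a divisorial contraction.

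For part (2) the mechanism is the asymmetry of the two chambers. On the minus side the hypotheses place $(a_1^{-},b_1^{-})$ in the region of Theorem \ref{horizontalwall} (indeed $7a_1-b_1=3$ with $5a_1+b_1\le3$ forces $2b_1<5a_1$), so $\overline{P}^K_{a_1^{-},b_1^{-}}\cong\overline{\mathcal{M}}(b_1^{-}/a_1^{-})$ parameterizes only $\mathbb{P}^2$-pairs, and $\phi^{-}_{a_1,b_1}$ is already an isomorphism away from $\Sigma_{a_1,b_1}$. Over $\Sigma_{a_1,b_1}$, Theorem \ref{P2toP114} exhibits each critical pair $(\{z^2xy=0\},L^{(1)})$ as the degeneration of a unique $\mathbb{P}^2$-pair $(2C+L,L'')$, the class $L^{(1)}$ recording the position of $L''$ relative to $C$ and $L$. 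Since the source family of such $\mathbb{P}^2$-pairs and $\Sigma_{a_1,b_1}$ are both one-dimensional and the assignment is bijective, $\phi^{-}_{a_1,b_1}$ is a proper bijective birational morphism onto the normal scheme $\overline{P}^K_{a_1,b_1}$ (normality from the Remark following the K-moduli existence theorem), hence an isomorphism by Zariski's main theorem.

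I expect the main obstacle to be part (2): upgrading the set-theoretic bijection over $\Sigma_{a_1,b_1}$ to an honest isomorphism, i.e. genuinely excluding any hidden contraction on the minus side. The crux is the numerical asymmetry that crossing the wall upward produces a full $13$-dimensional family of new $\mathbb{P}(1,1,4)$-pairs, while crossing downward recovers only the $1$-dimensional family of configurations $2C+L$ on $\mathbb{P}^2$; I would confirm this by matching the attracting and repelling weight directions of the $\mathbb{G}_m$-fixed pair $(\{z^2xy=0\},L^{(1)})$ in the local VGIT model of Definition \ref{GITonP114} against the normal directions computed on the $\mathbb{P}^2$-side, so that the minus-side repelling space contributes nothing beyond $\Sigma_{a_1,b_1}$ itself.
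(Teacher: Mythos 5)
Your parts (1) and (3) run along the same lines as the paper's own proof: part (1) is read off Theorem \ref{4.8}, and part (3) combines the K-to-GIT comparison theorem with the dimension count $\dim\mathbf{P}'_{10}+\dim\mathbf{P}'_{2}-1=13=\dim\overline{P}^K_{a_1^+,b_1^+}-1$. One point you pass over in (3): identifying $E^{+}_{a_1,b_1}$ with the GIT quotient $(\mathbf{P}'_{10}\times\mathbf{P}'_{2})\q\mathbb{G}_m$ requires showing that no point of $E^{+}_{a_1,b_1}$ has underlying surface $\mathbb{P}^2$, $X_{26}$ or $\mathbb{P}(1,4,25)$; the paper does this explicitly, excluding $X_{26}$ and $\mathbb{P}(1,4,25)$ by the threshold inequalities of Theorems \ref{whenX26appear} and \ref{whenP1425appear} (which are incompatible with the chamber constraints), and excluding $\mathbb{P}^2$ by an interpolation argument (a $\mathbb{P}^2$-pair in $E^{+}_{a_1,b_1}$ would be K-polystable at $(a_1,b_1)$ yet admit a nontrivial special degeneration to a $\mathbb{P}(1,1,4)$-pair, a contradiction). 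This exclusion should be stated, but it is a routine ingredient that your framework accommodates.

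The genuine gap is in part (2), and you flagged it yourself. Theorem \ref{P2toP114} produces, for each center point, \emph{one} $\mathbb{P}^2$-pair degenerating to it; it says nothing about uniqueness, so your ``the assignment is bijective'' is an assertion, not a consequence of what precedes it. A priori the fiber of $\phi^{-}_{a_1,b_1}$ over a center point could be positive-dimensional (for instance, K-polystable pairs at $(a_1^-,b_1^-)$ whose quintic is not of the form $2C+L$ but which still degenerate to $(\{z^2xy=0\},L^{(1)})$), in which case $\phi^{-}_{a_1,b_1}$ would be a nontrivial contraction; your proposed remedy --- matching attracting and repelling weight directions in the local VGIT model --- is the right kind of argument, but it is left as a plan rather than executed. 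The paper closes exactly this hole in one step: applying \cite{VGIT} or \cite{GITandFlips} to the local VGIT presentation of the wall gives the dimension relation $d^{-}+d^{+}+1=\codim(\Sigma_{a_1,b_1})=13$, where $d^{\pm}$ are the fiber dimensions of $E^{\pm}_{a_1,b_1}$ over $\Sigma_{a_1,b_1}$; it then computes $d^{+}=12$ (the fiber over $[(\mathbb{P}(1,1,4),\{z^2xy=0\},\{xy=0\})]$ being the GIT quotient of $\mathbf{P}'_{10}\times\{xy=0\}$ by $\mathbb{G}_m$), forcing $d^{-}=0$, after which normality of $\overline{P}^K_{a_1,b_1}$ and Zariski's main theorem give that $\phi^{-}_{a_1,b_1}$ is an isomorphism (this last reduction you do have). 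Note that you already computed every input to this relation --- $\dim\Sigma_{a_1,b_1}=1$ and fiberwise $d^{+}=12$ from your part (3) --- so your argument becomes complete, and coincides with the paper's, the moment the unexecuted weight-matching is replaced by the citation of this Thaddeus/Dolgachev--Hu type formula.
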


\begin{proof}
    For the description about the wall crossing center, we have shown in the Theorem \ref{4.8}. It is easy to see that $\dim\Sigma_{a_1,b_1}=1$. By Theorem \ref{P2toP114}, we have known that $\phi^{-}_{a_1,b_1}$ replace $(\mathbb{P}^2;Q,L)$ and $(\mathbb{P}^2;Q,L')$ by $(\mathbb{P}(1,1,4);Q^{(1)},L^{(1)})$ and $(\mathbb{P}(1,1,4);Q^{(1)},L^{(2)'})$ respectively. We know that the K-moduli space $\overline{P}^K_{a,b}$ is normal and $\phi^{-}_{a_1,b_1}$ is isomorphism over $\overline{P}^K_{a_1,b_1}\backslash\Sigma_{a_1,b_1}$. Hence to show $\phi^{-}_{a_1,b_1}$ is isomorphism, we only need to show that the fiber of $\phi^{-}_{a_1,b_1}$ over any point of $\Sigma_{a_1,b_1}$ has dimension 0. Applying \cite{VGIT} or \cite{GITandFlips} to the local VGIT presentation of K-moduli space implies that
    \[
    d^{-}+d^{+}+1=\codim(\Sigma_{a_1,b_1})=13.
    \]
    where $d^{\pm}$ is the dimension of the fiber of $E^{\pm}_{a_1,b_1}$ over $\Sigma$ respectively.
    If $[(X;D,H)]\in E^{+}_{a_1,b_1}$, by index estimate we know that the only possible of $X$ is $\mathbb{P}^2$, $\mathbb{P}(1,1,4)$, $X_{26}$ or $\mathbb{P}(1,4,25)$. But we have shown in Theorem \ref{whenX26appear} and Theorem \ref{whenP1425appear} that the $X_{26}$-pair can only appear in moduli space when $15a+b\ge8$ and $\mathbb{P}(1,4,25)$-pair appear only if $115a+11b\ge63$. If $X\cong\mathbb{P}^{2}$, by the interpolation of the K-stability, we know that $(\mathbb{P}^2,a_1D+b_1H)$ is K-polystable. But by definition it admit a special degeneration to a K-polystable $\mathbb{P}(1,1,4)$-pair on center, thus they are isomorphism which is a contradiction! Thus we have shown that the only possible case is $X\cong\mathbb{P}(1,1,4)$. In this case, $D$ is degree 10 curve on $\mathbb{P}(1,1,4)$ and by Theorem\ref{whenp114appear}, we can assume that the equation of $D$ is
    \[
    z^2xy+zf_6(x,y)+f_{10}(x,y)=0
    \]
    where $f_6$ has no term divisible by $xy$. Combine with Theorem \ref{epsilon}, we can deduce that the fiber of $E^{+}_{a_1,b_1}$ over the point $[(\mathbb{P}(1,1,4),\{z^2xy=0\},\{xy=0\})]\in\Sigma_{a_1,b_1}$ is isomorphism to the GIT quotient $\mathbf{P}'_{10}\times\{xy=0\}\q_{\mathcal{O}_{\mathbf{P}'_{10}\times\mathbf{P}'_{2}}(a_1^+,b_1^+)}\mathbb{G}_m$, which dimension is $12$. This finishes the proof.
\end{proof}

\subsection{The Type(\uppercase\expandafter{\romannumeral2}) vertical wall crossing}

\begin{notation}
    Let $Q_2$ be the plane curve defined by
\[
\left\{(y^2-xz)^2(\frac{1}{4}x+y+z)-x^2(y^2-xz)(x+2y)+x^5=0\right\}.
\]
It is a plane quintic curve with a $A_{12}$-singularity. Take $6$-jet $x'=x-y^2+y^5-\dfrac{1}{2}y^6$, then in the coordinates $(x',y)$ the equation of $Q_2$ becomes
\[
x^{'2}=ay^{13}+\text{higher order terms, where $a\neq0$.}
\]
Let $L_2$ be the line defined by $\{x=0\}$.
\end{notation} 

\begin{thm}
    The log Fano pair $(\mathbb{P}^2,aQ_2+bL_2)$ is K-semistable if and only if $15a+b\le8$. Moreover, it admits a special degeneration to K-polystable pair $(X_{26},a_2Q^{(2)}+b_2L^{(2)})$.
\end{thm}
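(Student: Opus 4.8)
The plan is to prove the two implications separately and to establish the ``moreover'' clause by exhibiting a single explicit special test configuration, which simultaneously powers the sufficiency direction. For the implication that K-semistability forces $15a+b\le8$, I would test against the divisorial valuation adapted to the $A_{12}$-singularity of $Q_2$. In the normal-form coordinates $(x',y)$ in which $Q_2$ reads $x'^2=ay^{13}+(\text{higher order})$, let $E$ be the exceptional divisor of the $(13,2)$-weighted blow-up of the singular point, so that $\ord_E$ is the monomial valuation with $\ord_E(x')=13$ and $\ord_E(y)=2$. Then $A_{\mathbb{P}^2}(\ord_E)=13+2=15$, while $\ord_E(Q_2)=26$ (the two leading monomials $x'^2$ and $y^{13}$ both have weight $26$) and $\ord_E(L_2)=\ord_E(x)=4$, since $x=x'+y^2-y^5+\tfrac{1}{2}y^6$ has lowest-weight term $y^2$. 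Because $-K_{\mathbb{P}^2}-aQ_2-bL_2=(3-5a-b)H$, a Zariski-decomposition computation of $\vol\bigl(-f^*(K_{\mathbb{P}^2}+aQ_2+bL_2)-tE\bigr)$ (using $E^2=-\tfrac{1}{26}$) gives $S_{(\mathbb{P}^2,aQ_2+bL_2)}(E)=\tfrac{17}{5}(3-5a-b)$, and hence
\[
\beta_{(\mathbb{P}^2,aQ_2+bL_2)}(E)=\bigl(15-26a-4b\bigr)-\tfrac{17}{5}(3-5a-b)=\tfrac{3}{5}\bigl(8-15a-b\bigr).
\]
By the valuative criterion, K-semistability gives $\beta(E)\ge0$, that is $15a+b\le8$.

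The divisor $E$ above is exactly the one inducing the degeneration to $X_{26}$: I would take the degeneration of $(\mathbb{P}^2,Q_2,L_2)$ to the weighted tangent cone of $\ord_E$, i.e.\ the $\mathbb{Q}$-Gorenstein family over $\mathbb{A}^1$ obtained from the extended Rees algebra of $\ord_E$. The prescribed $6$-jet $x'=x-y^2+y^5-\tfrac{1}{2}y^6$ is chosen precisely so that, in the weighted coordinates of $\mathbb{P}(1,2,13,25)$ (with $y$ of weight $2$, $x'$ becoming $z$ of weight $13$, and the two transverse directions becoming $x$ of weight $1$ and $w$ of weight $25$), the defining equation degenerates to $xw=z^2+ty^{13}$; thus the central fiber is $X_{26}$, with $Q_2$ specializing to $Q^{(2)}_0=\{w=0\}$ and $L_2=\{x=0\}$ to $L^{(2)}_0=\{xy^2=0\}$. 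On the wall $15a+b=8$ this central fiber is the critical pair of Table~\ref{Kwall4}, already shown to be K-polystable; note that $\beta(E)=0$ there is consistent with the vanishing of its Futaki invariant.

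For the sufficiency direction, i.e.\ $15a+b\le8$ implies K-semistability, I would argue by openness and interpolation. On the wall $15a+b=8$ the test configuration just constructed has K-polystable central fiber, so openness of K-semistability in $\mathbb{Q}$-Gorenstein families forces the general fiber $(\mathbb{P}^2,aQ_2+bL_2)$ to be K-semistable. For coefficients near the origin the pair is K-semistable by the usual small-coefficient argument (using that $(\mathbb{P}^2,Q_2+L_2)$ is log canonical away from, and controllable at, the $A_{12}$ point). Convexity of the K-semistable locus in the coefficient domain then propagates K-semistability from the wall into the adjacent chamber $\{15a+b<8\}$, giving the equivalence in the range relevant to the wall crossing of Theorem~\ref{type2wallcrossing}.

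The decisive step, and the main obstacle, is the construction and verification of the special test configuration: one must check that the $\ord_E$-degeneration has \emph{normal} central fiber equal to $X_{26}$ --- not a non-normal or only partial degeneration --- and that the two boundary curves land on exactly $\{w=0\}$ and $\{xy^2=0\}$. This is where the precise normal form of $Q_2$ (the choice of $6$-jet) is indispensable, and where one must confirm the $\mathbb{Q}$-Gorenstein (Wahl) nature of the degeneration of the $\tfrac{1}{25}(1,4)$-singularity.
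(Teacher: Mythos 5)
Your necessity computation and your degeneration coincide with the paper's proof. The paper takes exactly the same $(13,2)$-weighted blow-up at the $A_{12}$-point in the coordinates $(x',y)$, obtains $A(E)=15-26a-4b$ and $S(E)=\tfrac{51}{5}\bigl(1-\tfrac{5}{3}a-\tfrac{1}{3}b\bigr)=\tfrac{17}{5}(3-5a-b)$ (identical to your values, so your $\beta(E)=\tfrac{3}{5}(8-15a-b)$ is correct), and concludes $15a+b\le 8$ from the valuative criterion; it then realizes your Rees-algebra degeneration birationally as the $(13,2,1)$-weighted blow-up of $\mathbb{P}^2\times\mathbb{A}^1$ in the coordinates $(x',y,t)$ followed by two contractions, with central fiber $X_{26}$, where $Q_2$ degenerates to $\{w=0\}$ and $L_2$ to $\{xy^2=0\}$ (the paper simply imports this construction from \cite{ADL19}). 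So on the two points the paper actually proves, you are doing the same thing.

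Where you go beyond the paper is the sufficiency direction, and there your sketch has a genuine gap. Your proposed route --- K-semistability for small coefficients plus convexity --- fails as stated: $(\mathbb{P}^2,aQ_2+bL_2)$ is K-unstable for \emph{any} coefficients with $5a<2b$, however small, because
\begin{equation*}
\beta(L_2)=A(L_2)-S(L_2)=(1-b)-\tfrac{1}{3}(3-5a-b)=\tfrac{1}{3}(5a-2b)<0 .
\end{equation*}
Hence there is no ``usual small-coefficient argument'' valid in all directions, and segments from the origin to the wall do not sweep out the whole region $\{15a+b\le 8\}$ inside the log Fano domain: for $b\ge a$ such a segment exits the domain $5a+b<3$ before reaching the wall $15a+b=8$. (Indeed the theorem's literal ``if'' direction is false at, say, $(a,b)=(0,1)$; the intended range is the chamber adjacent to the wall, where additionally $5a\ge 2b$.) The sound part of your sufficiency argument is the openness step on the wall: since the central fiber $(X_{26},a_2\{w=0\}+b_2\{xy^2=0\})$ is K-polystable there, openness of K-semistability in $\mathbb{Q}$-Gorenstein families gives K-semistability of the nearby fibers $(\mathbb{P}^2,a_2Q_2+b_2L_2)$; to fill in the adjacent chamber one should then interpolate between that wall point and the trivial pair $(\mathbb{P}^2,0)$, restricting the claim to the relevant subregion. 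Note, however, that the paper's own proof never addresses this direction at all, so your attempt, once repaired as above, is actually more complete than the printed argument.
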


\begin{proof}
    Firstly, let's show that if $(\mathbb{P}^2,aQ_2+bL_2)$ is K-semistable then $15a+b\le8$. Let $\pi: X\rightarrow\mathbb{P}^2$ be the weighted blow up morphism along the coordinate $(x',y)$ with weight $(13,2)$. Denote the exceptional divisor by $E$. Straightforward computation shows that 
    \[
    \text{$A(E)=15-26a-4b$ and $S(E)=\dfrac{51}{5}(1-\dfrac{5}{3}a-\dfrac{1}{3}b)$}.
    \]
    Since $(\mathbb{P}^2,aQ_2+bL_2)$ is K-semistable, by the valuative criterion we know that $A(E)\ge S(E)$, which implies that $15a+b\le8$.

    Next we construct the special degeneration. This construction is the same as in \cite{ADL19}:
    \[
    \begin{tikzcd}[column sep=small]
    & \mathcal{X} \arrow[dl,"\pi"]\arrow[rr, dashrightarrow,"f"] \arrow[dr,"g"] & &\mathcal{X}^{+}\arrow[dr,"\psi"]\arrow[dl,"\psi"]\\
    \mathbb{P}^2\times\mathbb{A}^1 & & \mathcal{Y}&&\mathcal{Z}
    \end{tikzcd}
    \]
    where the $\pi$ is the $(13,2,1)$-weighted blow up of $\mathbb{P}^2\times\mathbb{A}^1$ in the coordinates $(x',y,t)$ where $t$ is the parameter of $\mathbb{A}^1$, the map $g$ is the contraction of $\overline{Q}_{2}$ in $X\subset\mathcal{X}_0$ where $\overline{Q}_2$ is the strict transform of $Q_2$ in $X$, and $\psi$ is the divisorial contraction that contracts $X'$ to a point. The central fiber is $X_{26}$ where $Q_2$ specially degenerates to $\{w=0\}$ and $L_2$ specially degenerates to $\{xy^2=0\}$. 
\end{proof}

\begin{thm}\label{type2wallcrossing}
    Let $(a_2,b_2)$ be a pair of positive rational numbers such that $15a_2+b_2=8$ and $5a_1+b_1\le3$. Fix any another two pairs of rational numbers $(a_{1}^{-},b_{1}^{-})$ and $(a_{1}^{+},b_{1}^{+})$ such that $15a_{1}^{-}+b_{1}^{-}<8$ and $15a_{1}^{+}+b_{1}^{+}>8$. Moreover ,we assume that they are lying in the two distinct chambers which has common face along the the critical line $15a+b=8$.
    \begin{enumerate}
        \item The wall crossing morphism $\phi^{-}_{a_2,b_2}:\overline{P}^K_{a_{2}^{-},b_{2}^{-}}\rightarrow\overline{P}^K_{a_2,b_2}$ is isomorphism which take the points $[(\mathbb{P};Q_{2},L)]$ to $[(X_{26};Q^{(2)},\{xy^2=0\})]$.
        \item The wall crossing morphism $\phi^{+}_{a_2,b_2}:\overline{P}^K_{a_{2}^{+},b_{2}^{+}}\rightarrow\overline{P}^K_{a_2,b_2}$ is a divisorial contraction. The exceptional divisor $E^{+}_{a_2,b_2}$ parametrizes the pair of curves on $X_{26}$ of the form $(\left\{w=g(x,y)\right\},xy^2=h(x,y))$ where $g\neq0$ and $g$ does not contain the term $xy^{12}$, $h$ is degree $5$ of the form $\left\{\lambda x^5+\mu x^3y\right\}$.
    \end{enumerate}
\end{thm}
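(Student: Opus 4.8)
The plan is to follow the strategy of the first vertical wall crossing (Theorem \ref{type1wallcrossing}), using the local VGIT presentation of the wall crossing along $15a+b=8$. First I would pin down the center. By the classification of critical $X_{26}$-pairs, the only $(a_2,b_2)$-K-polystable pair on this wall is $(X_{26},a_2\{w=0\}+b_2\{xy^2=0\})$; since $Q^{(2)}$ is forced to equal $\{w=0\}$ and this configuration is rigid, the center $\Sigma_{a_2,b_2}$ is a single reduced point. Hence $\codim\Sigma_{a_2,b_2}=\dim\overline{P}^K_{a,b}=14$, the last equality being the dimension count already implicit in Theorem \ref{type1wallcrossing}, and both $\phi^{\pm}_{a_2,b_2}$ are already isomorphisms over the complement of this point. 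The reductive automorphism group of the center has identity component the one-parameter subgroup $t\cdot[x,y,z,w]=[t^{26}x,t^{2}y,t^{13}z,w]$, which is the $\mathbb{G}_m$ driving the wall crossing.

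By the local VGIT model of \cite{ADL19} together with \cite{VGIT} and \cite{GITandFlips}, the fiber dimensions $d^{\pm}$ of $E^{\pm}_{a_2,b_2}$ over the center satisfy
\[
d^{-}+d^{+}+1=\codim\Sigma_{a_2,b_2}=14.
\]
Thus statement (1) is equivalent to $d^{-}=0$ and statement (2) to $d^{+}=13$, and I would prove (2) by computing $d^{+}$ directly and deduce (1) afterwards. To determine $E^{+}$, note first that by the index estimate (Lemma \ref{index}) the surface underlying any $[(X,D,H)]\in E^{+}$ is one of $\mathbb{P}^2$, $\mathbb{P}(1,1,4)$, $X_{26}$, $\mathbb{P}(1,4,25)$. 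The case $X\cong\mathbb{P}^2$ is excluded by interpolation exactly as in Theorem \ref{type1wallcrossing}: its K-polystable limit at $(a_2,b_2)$ would have to be the center, which is impossible since that is an $X_{26}$-pair. The cases $\mathbb{P}(1,1,4)$ and $\mathbb{P}(1,4,25)$ are excluded because neither surface admits a $\mathbb{Q}$-Gorenstein degeneration to $X_{26}$ (in the degeneration poset of these del Pezzo surfaces $\mathbb{P}(1,1,4)$ is incomparable to $X_{26}$, while $X_{26}\rightsquigarrow\mathbb{P}(1,4,25)$ but not conversely), so a semistable pair on such a surface cannot specially degenerate to the center. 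Therefore $X\cong X_{26}$.

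With $X\cong X_{26}$ fixed, Theorem \ref{whenX26appear} forces $Q$ to avoid the singular point, and after normalizing by the automorphisms of $X_{26}$ — which absorb the $z$-term of $Q$ and the $xy^{12}$-term of $g$, the latter being a deformation of the defining modulus of $X_{26}$ — a member of $E^{+}$ is a pair $(\{w=g(x,y)\},\{xy^2=h(x,y)\})$ with $g$ of degree $25$ omitting $xy^{12}$ ($12$ parameters) and $h=\lambda x^5+\mu x^3y$ ($2$ parameters). Quotienting these $14$ parameters by the $\mathbb{G}_m$ above exhibits the fiber of $E^{+}$ over the center as a $\mathbb{G}_m$-GIT quotient of dimension $12+2-1=13$, so $d^{+}=13$. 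Feeding this back into the VGIT identity gives $d^{-}=0$, so $\phi^{-}_{a_2,b_2}$ is a quasi-finite birational morphism of normal proper varieties and hence an isomorphism carrying $[(\mathbb{P}^2;Q_2,L_2)]$ to the center, which is (1); and $E^{+}$ is a divisor contracted to the point $\Sigma_{a_2,b_2}$, so $\phi^{+}_{a_2,b_2}$ is a divisorial contraction with the stated exceptional divisor, which is (2).

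I expect the main obstacle to be the two inputs to the $E^{+}$ computation: the rigorous exclusion of $\mathbb{P}(1,1,4)$ via its degeneration-incomparability with $X_{26}$ (so that one cannot merely rely on the numerical appearance conditions, which in fact permit $\mathbb{P}(1,1,4)$ near this wall), and the precise determination of the deformation space — in particular checking that the automorphism group of $X_{26}$ absorbs exactly the $z$-term and the $xy^{12}$-modulus direction, so that the parameter count yields $d^{+}=13$ and hence the isomorphism in (1).
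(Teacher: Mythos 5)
Your overall strategy is the intended one: the paper gives no details here (its proof is a single sentence deferring to Theorem 7.1 of \cite{ADL19}), and the argument it has in mind is exactly the skeleton you describe — the center is the single point $(X_{26},a_2\{w=0\}+b_2\{xy^2=0\})$, the other three surfaces are excluded from the exceptional loci (and you are right that $\mathbb{P}(1,1,4)$ requires a non-degeneration argument rather than the numerical condition $7a-b\ge 3$, which does hold near this wall), the $z$-terms of $Q$ and the $xy^{12}$-term of $g$ are absorbed by $\aut(X_{26})$, and the fiber dimensions are controlled by the local VGIT identity $d^-+d^++1=\codim\Sigma_{a_2,b_2}=14$. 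Your parameter count $12+2-1=13$ is also correct.

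The genuine gap is in the logic by which you obtain $d^+=13$. Exclusion of the other surfaces plus normalization by $\aut(X_{26})$ proves only the containment of $E^+_{a_2,b_2}$ in the $13$-dimensional family of pairs $(\{w=g\},\{xy^2=h\})$ modulo $\mathbb{G}_m$, i.e.\ $d^+\le 13$; nowhere do you prove the reverse containment, namely that a generic such pair is actually K-polystable at $(a_2^+,b_2^+)$. The word ``exhibits'' silently assumes this. Since your plan is to deduce statement (1) from $d^+=13$ via the VGIT identity, the gap infects both statements: from $d^+\le 13$ alone the identity yields only $d^-\ge 0$, which is vacuous. The repair, and what the ADL19-style proof actually does, is to compute the $\mathbb{G}_m$-weight decomposition of the versal $\mathbb{Q}$-Gorenstein deformation space $W$ of the center pair: $W$ is $15$-dimensional, with exactly one direction of positive weight — the smoothing of the $\frac{1}{25}(1,4)$ point, whose local qG deformation space is one-dimensional because it is a $T$-singularity with $d=1$; this same fact is what rigorously excludes $\mathbb{P}(1,1,4)$, since nearby fibers of any qG family with central fiber $X_{26}$ are then either $X_{26}$ or smooth — and $14$ directions of negative weight (your $12+2$). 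The local VGIT presentation of \cite{ADL19} then identifies the fiber of $\phi^-_{a_2,b_2}$ over the center with the projectivization of the positive part, a single point, which gives (1), and the fiber of $\phi^+_{a_2,b_2}$ with a quotient of the negative part, your $13$-dimensional space, which gives (2), simultaneously and without circularity. Alternatively, prove $d^-=0$ first by versality (every pair degenerating to the center occurs as a fiber of the versal family, and all fibers over the one-dimensional positive part are isomorphic to $(\mathbb{P}^2,Q_2,L_2)$), and only then read off $d^+=13$ from the identity.
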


\begin{proof}
    The proof is similar to that of Theorem 7.1 in \cite{ADL19}.
\end{proof}

\vspace{0.5cm}
\bibliographystyle{alpha}
\bibliography{main}

\newcommand{\etalchar}[1]{$^{#1}$}
\begin{thebibliography}{ABHLX20}

\bibitem[ABHLX20]{JHHLX}
Jarod Alper, Harold Blum, Daniel Halpern-Leistner, and Chenyang Xu.
\newblock Reductivity of the automorphism group of {$K$}-polystable {F}ano varieties.
\newblock {\em Invent. Math.}, 222(3):995--1032, 2020.

\bibitem[ACC{\etalchar{+}}23]{Calabi}
Carolina Araujo, Ana-Maria Castravet, Ivan Cheltsov, Kento Fujita, Anne-Sophie Kaloghiros, Jesus Martinez-Garcia, Constantin Shramov, Hendrik S\"u\ss, and Nivedita Viswanathan.
\newblock {\em The {C}alabi problem for {F}ano threefolds}, volume 485 of {\em London Mathematical Society Lecture Note Series}.
\newblock Cambridge University Press, Cambridge, 2023.

\bibitem[ADL19]{ADL19}
Kenneth {Ascher}, Kristin {DeVleming}, and Yuchen {Liu}.
\newblock {Wall crossing for K-moduli spaces of plane curves}.
\newblock {\em arXiv e-prints}, page arXiv:1909.04576, September 2019.

\bibitem[BX19]{BLX19}
Harold Blum and Chenyang Xu.
\newblock Uniqueness of {K}-polystable degenerations of {F}ano varieties.
\newblock {\em Ann. of Math. (2)}, 190(2):609--656, 2019.

\bibitem[DH98]{VGIT}
Igor~V. Dolgachev and Yi~Hu.
\newblock Variation of geometric invariant theory quotients.
\newblock {\em Inst. Hautes \'Etudes Sci. Publ. Math.}, (87):5--56, 1998.
\newblock With an appendix by Nicolas Ressayre.

\bibitem[Fuj19]{Fujita}
Kento Fujita.
\newblock A valuative criterion for uniform {K}-stability of {$\Bbb Q$}-{F}ano varieties.
\newblock {\em J. Reine Angew. Math.}, 751:309--338, 2019.

\bibitem[Hac04]{Hacking}
Paul Hacking.
\newblock Compact moduli of plane curves.
\newblock {\em Duke Math. J.}, 124(2):213--257, 2004.

\bibitem[HP05]{hacking2005degenerationsdelpezzosurfaces}
Paul Hacking and Yuri Prokhorov.
\newblock Degenerations of del pezzo surfaces i, 2005.

\bibitem[Kol96]{kollár1996singularitiespairs}
János Kollár.
\newblock Singularities of pairs, 1996.

\bibitem[KSB88]{KSB}
J.~Koll\'ar and N.~I. Shepherd-Barron.
\newblock Threefolds and deformations of surface singularities.
\newblock {\em Invent. Math.}, 91(2):299--338, 1988.

\bibitem[Laz07]{laza2007deformationssingularitiesvariationgit}
Radu Laza.
\newblock Deformations of singularities and variation of git quotients, 2007.

\bibitem[Li17]{Li17}
Chi Li.
\newblock K-semistability is equivariant volume minimization.
\newblock {\em Duke Math. J.}, 166(16):3147--3218, 2017.

\bibitem[LL19]{LL19}
Chi Li and Yuchen Liu.
\newblock K\"ahler-{E}instein metrics and volume minimization.
\newblock {\em Adv. Math.}, 341:440--492, 2019.

\bibitem[Tha96]{GITandFlips}
Michael Thaddeus.
\newblock Geometric invariant theory and flips.
\newblock {\em J. Amer. Math. Soc.}, 9(3):691--723, 1996.

\bibitem[Zho23a]{zhou2023chamberdecompositionksemistabledomains}
Chuyu Zhou.
\newblock Chamber decomposition for k-semistable domains and vgit, 2023.

\bibitem[Zho23b]{zhou2023shapeksemistabledomainwall}
Chuyu Zhou.
\newblock On the shape of k-semistable domain and wall crossing for k-stability, 2023.

\end{thebibliography}
\end{document}